\documentclass[11pt,english]{amsart}
\usepackage{amssymb,amsmath,mathabx, amscd}
\usepackage[pdftex]{graphicx}
\usepackage{enumerate}
\usepackage{tikz, tikz-cd}
\usepackage[margin=1in]{geometry}
\usepackage{color}
\usepackage{hyperref}
\usepackage{mathrsfs, colonequals, wasysym}
\usepackage{commath}
\usepackage{array}
\usepackage{spalign}
\usepackage{bbold}
\usepackage{algorithm}
\usepackage{algpseudocode}
\usepackage{xpatch}
\usepackage{etoolbox}

\newcommand{\ee}{\mathbb{E}}
\newcommand{\nn}{\mathbb{N}}

\newcommand{\zz}{\mathbb{Z}}

\newcommand{\cE}{\mathcal{E}}
\newcommand{\cF}{\mathcal{F}}

\newcommand{\eps}{\varepsilon}

\newtheorem{thm}{Theorem}[section]

\newtheorem{lem}[thm]{Lemma}

\newtheorem{prop}[thm]{Proposition}
\newtheorem{cor}[thm]{Corollary}

\xpatchcmd{\proof}{\itshape #1}{%
  \itshape\color{blue}%
  \ifstrequal{#1}{\proofname}%
    {\proofname}
    {\proofname\ #1}
}{}{}
\makeatletter
\newcommand*{\rom}[1]{\expandafter\@slowromancap\romannumeral #1@}
\makeatother

\theoremstyle{definition}
\newtheorem{defin}[thm]{Definition}

\newtheorem{example}[thm]{Example}
\newtheorem{assumption}[thm]{Assumption}

\theoremstyle{remark}
\newtheorem{rem}{Remark}

\renewcommand{\bar}{\overline}

\DeclareMathOperator*{\argmax}{\arg\!\max}

\title{Robust Reconstruction on Trees with a Growing Alphabet}
\author{Heon Lee}
\email{heonl@stanford.edu}
\address{Stanford University, Department of Statistics}

\begin{document}

\begin{abstract}
We study robust reconstruction for the \(q\)-state Potts broadcast process on a Galton-Watson tree in the growing-alphabet regime \(q\to\infty\), with boundary depth growing sufficiently quickly in relation to \(q\). We show that, in the regime \(d\lambda >1\), where \(d\) is the expected number of offspring and \(\lambda\) is the nontrivial eigenvalue of the Potts transition matrix, the root posterior is asymptotically unchanged by a broad class of noise channels applied independently to the boundary labels. This remains true even when the probability of retaining the true label at an individual boundary vertex tends to zero as \(q\to\infty\). More precisely, under suitable moment and noise assumptions, the noiseless and noisy root posteriors converge to one another in expected total variation, and hence have the same asymptotic Bayes-optimal reconstruction accuracy.
\end{abstract}

\maketitle

\tableofcontents

\section{Introduction}

Broadcasting on a tree is a canonical model for studying information propagation through a branching network. Given a rooted tree, a label is assigned to the root and propagated independently along the edges through a fixed Markov channel, \(M\). This topic has been studied in information theory, mathematical genetics, and statistical physics (see, e.g.,~\cite{EKPS00, MP03}). We consider the ferromagnetic Potts broadcast process on Galton-Watson trees with mean offspring \(d\), whose vertices carry labels in the alphabet \([q] = \{1,\dots, q\}\). The root receives a uniformly random label. Along each parent-to-child edge, the child vertex copies the parent's label with probability \(\lambda\in(0,1)\) and otherwise receives an independently generated uniform label in \([q]\). We call an edge copying the parent's label a copy edge. Otherwise, call the edge mutated. Not every vertex that shares its parent's label is the endpoint of a copy edge, since the label along a mutated edge may coincide with the parent label by random chance.

The central questions are how well one can estimate the root label from the boundary as the tree depth grows, whether nontrivial reconstruction remains possible asymptotically in \(q\), and where the sharp reconstruction threshold lies. Here, the boundary refers to the depth-\(k_q\) vertices, where \(k_q\) diverges as \(q\to\infty\). Hence, not all leaf vertices are boundary vertices.

A related line of work studies reconstruction from boundary labels that are observed with noise. One may ask whether nontrivial reconstruction remains possible from the noisy boundary. A stronger question is whether the additional noise changes the Bayes-optimal reconstruction accuracy. This is the question we study in the growing-alphabet regime: \begin{center}
\begin{minipage}{0.82\textwidth}
\itshape
As the size of the alphabet and tree depth jointly diverge, can noisy boundary observations yield the same asymptotic Bayes-optimal reconstruction accuracy as noiseless observations, even when individual observations become increasingly unreliable?
\end{minipage}
\end{center}

When the alphabet is large, repeated labels are unlikely to arise from independent redraws along mutated edges. Repetitions in different child-subtrees of a vertex, meaning subtrees rooted at distinct children of that vertex, therefore provide evidence about ancestral labels. The main step is to show that these identified labels retain almost all the information needed for optimal reconstruction, and the root label remains recoverable from a sufficiently deep noisy boundary. We assume that errors are spread across incorrect labels, with the probability of any particular incorrect output sufficiently small relative to the minimum probability of retaining the true label. Our main result, Theorem~\ref{thm:tree-theorem}, can be informally summarized as follows. 
\begin{thm}(Informal)
Suppose $d\lambda>1$ and the offspring distribution has sufficiently many finite moments. As $q\to\infty$, the root posteriors based on noisy and noiseless boundaries converge to one another in expected total variation, provided the depth grows sufficiently quickly relative to \(q\) and errors are sufficiently spread among incorrect labels relative to the retention probability. The noise may be asymmetric, and the probability of retaining the true label may tend to zero. In particular, noisy and noiseless observations have the same limiting Bayes-optimal reconstruction accuracy bounded away from zero.
\end{thm}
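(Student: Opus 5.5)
The plan is to sandwich both posteriors between a single coarser statistic. Let \(\mathcal{G}_q\) be the \(\sigma\)-field generated by the tree and by the set of labels \emph{identified} at depth \(\ell_q \ll k_q\). A label at a depth-\(\ell_q\) vertex \(v\) counts as identified when it appears, at several boundary descendants lying in distinct child-subtrees of \(v\), more often than chance collisions would allow. I will construct two estimators, \(\hat\sigma\) from the noiseless boundary and \(\tilde\sigma\) from the noisy boundary, and show that with probability \(1-o(1)\) both equal the true depth-\(\ell_q\) labels on a common set \(S_q\) of vertices. The set \(S_q\) depends only on the tree and the copy-edge structure. Then, writing \(\pi\), \(\tilde\pi\) and \(\pi^{S}\) for the root posteriors given the noiseless boundary, the noisy boundary, and \((T,S_q,\sigma_{S_q})\) respectively, I will bound
\[
\ee\,\|\pi-\tilde\pi\|_{TV}\le \ee\,\|\pi-\pi^{S}\|_{TV}+\ee\,\|\tilde\pi-\pi^{S}\|_{TV}.
\]
Each term will be controlled by the information-monotonicity (martingale) property of posteriors. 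Since \(\pi^S\) is measurable up to an \(o(1)\) event with respect to either boundary, each term is at most \(\ee\|\pi^{\mathrm{full}}-\pi^{S}\|_{TV}+o(1)\), where \(\pi^{\mathrm{full}}\) conditions on all labels down to depth \(k_q\). The equality of Bayes accuracies follows because accuracy is a \(1\)-Lipschitz functional of the posterior in total variation. The limit being positive follows from \(d\lambda>1\): the copy-edge subtree is supercritical Galton--Watson with mean \(d\lambda\), so with probability bounded below the root label survives to depth \(\ell_q\) along copy edges.

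The steps are as follows. (i) \emph{Copy-cluster survival.} Using the moment assumptions and Kesten--Stigum-type bounds for the supercritical copy process, I will show that each depth-\(\ell_q\) vertex whose copy cluster survives has at least two child-subtrees carrying \(\gtrsim (d\lambda)^{k_q-\ell_q}\) boundary copies of its label, with failure probability summable over the \(\approx d^{\ell_q}\) vertices.

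(ii) \emph{Noiseless identification.} Over mutated edges, a fixed label recurs only with probability \(O(1/q)\) per vertex. Hence a union bound over labels and vertices shows spurious repetitions across distinct child-subtrees are rare, provided the depth grows fast enough relative to \(q\) that true repetition counts dominate.

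(iii) \emph{Noisy identification.} Each true copy survives noise with probability at least \(p_{\min}\). Therefore the count of the true label concentrates around \(p_{\min}(d\lambda)^{k_q-\ell_q}\), by a Chernoff bound conditional on the tree. A wrong label \(j\) receives mass at most \(\max_{i\ne j}\epsilon_{ij}\) per boundary vertex. The spreading assumption, namely that this is \(o(p_{\min})\) with a margin, makes the wrong-label counts smaller by a factor that beats a union bound over \(q\) labels. This is the step where \(p_{\min}\to0\) is tolerated, since only the ratio and the absolute count \(p_{\min}(d\lambda)^{k_q-\ell_q}\to\infty\) matter.

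(iv) \emph{Sufficiency.} I will show that \(\ee\|\pi^{\mathrm{full}}-\pi^{S}\|_{TV}\to0\). The unidentified vertices at depth \(\ell_q\) are essentially those whose copy clusters died, and their subtrees carry information about the root only through coincidental label matches, which have vanishing weight as \(q\to\infty\).

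The main obstacle is step (iv). One must show that discarding everything except identified labels loses asymptotically no information, uniformly in a growing alphabet. My first attempt will compute the likelihood ratio of the full data given root label \(i\) explicitly by summing over copy-edge configurations. I will then show that, off the identified set, the likelihood contribution is \(1+O(\mathrm{poly}(k_q)/q)\) independently of \(i\), and use the moment assumption to control the expected number of such vertices. A secondary difficulty is choosing \(\ell_q\) so that the conditions of steps (ii)–(iv) hold simultaneously. Here I plan \(\ell_q\) fixed but large, then let it diverge slowly with a diagonal argument.
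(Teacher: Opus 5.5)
\textbf{Gap 1: single-block identification fails for deep trees.} Theorem~\ref{thm:tree-theorem} puts no upper bound on \(k_q\), but you identify depth-\(\ell_q\) labels in one block of height \(h=k_q-\ell_q\to\infty\). In a child-subtree of height \(h\):
\begin{itemize}
\item the label of its top vertex appears about \((d\lambda)^h\) times, or about \(a_q(d\lambda)^h\) times after noise;
\item a fixed wrong label appears about \(d^h/q\) times through independent redraws, and about \(b_qd^h\) times through boundary noise.
\end{itemize}
The signal-to-spurious ratios are therefore of order \(q\lambda^h\) and \((a_q/b_q)\lambda^h\). These are small once \(h\) exceeds \(\log q/\log(1/\lambda)\) and \(\log(a_q/b_q)/\log(1/\lambda)\), respectively. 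Beyond that, spurious counts swamp the true one and no count threshold isolates it. So the proviso in (ii), that the depth grow fast relative to \(q\), has the direction reversed: each identification block must have height bounded above. Likewise, the spreading condition is not ``\(b_q=o(a_q)\) with a margin.'' The relevant scale is \(a_q^\zeta\), which is comparable to \(a_q\lambda^{h_q}\) at the height \(h_q=\lceil\log(1/a_q)/\log(d\lambda)\rceil+H\). Thus \(b_q\ll a_q^\zeta\) says exactly that the noise counts \(b_qd^{h_q}\) are small against the constant-order signal \(a_q(d\lambda)^{h_q}\), and only at that height.

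The missing idea is multi-scale decoding, which the paper carries out in three stages.
\begin{itemize}
\item Decode one block of height \(h_q\) using thinning and a \(2s\)-multiplicity rule. This is where \(b_q^{2s-1}/a_q^{2s\zeta}\to0\) and the \(s\)th moment enter (Lemmas~\ref{lem:high-mult-two-witness} and~\ref{lem:h_q}). It yields labels at depth \(k_q-h_q\) whose effective noise matrix has diagonal at least \(c_1/2\) and vanishing off-diagonal entries.
\item Iterate the fixed-height \(h_0\)-decoder of Lemma~\ref{lem:h_0-decoder}. Its output is again a noise channel with diagonal at least \(a_*\), and the off-diagonal bound contracts as \(b\mapsto b/2+2/q\).
\item Stop at a depth \(R_q\) with \(d^{2R_q}(q^{-1}+b_q)\to0\), where the top tree is collision-free.
\end{itemize}
Each reduction is transferred by the same Markov/conditional-Jensen sandwich you propose.

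\textbf{Gap 2: step (iv) rests on the wrong mechanism.} You rightly flag (iv) as the crux. Unidentified depth-\(\ell_q\) vertices include those whose copy cluster survives through a single child branch. Conditional on survival this happens with probability about \(\alpha=f'(\xi)>0\), which also makes step (i) false as stated. The boundary copies below such a vertex \(w\) are genuine evidence, not coincidences. For \(i=\tau_w\) the likelihood factor they contribute is of order \(1+q\lambda^{\ell_q}\), which is large when \(\ell_q\) grows slowly. It is not \(1+O(\mathrm{poly}(k_q)/q)\) uniformly in \(i\). In the continuum model these copies remain positive-mass atoms of the root posterior even with \(q=\infty\). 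So the likelihood computation you plan will not close.

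The loss vanishes for a structural, \(q\)-independent reason. Pruning at the first two-witness vertex only increases the posterior masses of the retained atoms and of the diffuse part. Hence the total-variation error equals the posterior mass of the discarded atoms. Its expectation is at most
\[
\Pr(U_{k,m})=(1-P_{k-m})\prod_{j=k-m}^{k-1}f'(P_j)\le\alpha^m
\]
(Lemmas~\ref{lem:E_k-S_km} and~\ref{lem:u_km}). In words, the root label is lost only if its surviving copy lineage fails to branch in each of the first \(m\) generations. Proving that your \(\mathcal G_q\) is asymptotically sufficient needs an argument of this kind. It also needs agreement of the noisy and noiseless witness marks, summed over the roughly \(d^r\) vertices of each generation.

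\textbf{Minor point: positivity.} Survival of the root label alone does not keep the limiting accuracy away from zero. The paper's lower bound comes from the two-witness probability \((1-\xi)(1-\alpha)\).
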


The threshold is \(d\lambda > 1\) because the vertices connected to the root by copy edges form a Galton-Watson tree with mean offspring \(d\lambda\), which survives with positive probability exactly when \(d\lambda>1\). 

\subsection{Related Work}

A central benchmark for the reconstruction threshold is the Kesten-Stigum (KS) criterion \(d\lambda^2 = 1\), where \(d\) is the mean number of offspring and \(\lambda\) is the nontrivial eigenvalue of \(M\) (see, e.g.,~\cite{KS66, MP03}). For the binary symmetric broadcast process on regular trees, the KS bound is the sharp reconstruction threshold~\cite{BRZ95}. The threshold was subsequently extended in~\cite{EKPS00} to arbitrary infinite trees, where \(d\) is replaced by the branching number \(\mathrm{br}(T)\) defined for an infinite tree \(T\). In the case of Galton-Watson trees, which is our primary interest, \(\mathrm{br}(T) = d\) almost surely on the event that the tree does not go extinct at some finite depth~\cite{Lyons90}. Mossel and Peres~\cite{MP03} show that the KS criterion remains the sharp threshold for larger alphabets \([q]\) on \(d\)-ary trees when we are only allowed to see the census information, i.e., the number of vertices with label \(i\) in the boundary for every \(i\in[q]\).

Larger alphabets permit qualitatively different behavior. On a regular tree, if \(d\lambda > 1\), reconstruction is possible for all sufficiently large \(q\), while if \(d\lambda \leq 1\), reconstruction is impossible as shown in~\cite{Mossel01}. In fact, for regular trees, Sly~\cite{Sly11} showed that the KS bound is not sharp for the Potts broadcast model with \(q\geq 5\), while being sharp for \(q=3\) for sufficiently large \(d\). The result was extended in~\cite{MSS23} to a broad class of Galton-Watson trees, where the KS bound is not sharp when \(q\geq 5\) but is sharp for \(q=3,4\) for sufficiently large \(d\).

On the robust reconstruction side, Janson and Mossel~\cite{JM04} showed that, on trees with a fixed alphabet, nontrivial reconstruction with noisy labels can be attained asymptotically when \(\mathrm{br}(T)\lambda^2 > 1\), which corresponds to the KS threshold for \(d\)-ary and Galton-Watson trees. The noise model corresponds to independent boundary noise that retains each label with probability \(1-\eps\) and otherwise replaces it by an independent draw from a fixed nondegenerate distribution, for any fixed \(\eps<1\).

Mossel, Neeman, and Sly~\cite{MNS16} showed that in the binary case with constant noise, the noiseless and noisy Bayes-optimal reconstruction accuracy coincide asymptotically when sufficiently above the KS threshold, i.e., \(d\lambda^2 > C\) for some constant \(C\). Yu and Polyanskiy~\cite{YP24} subsequently removed the high-signal condition, establishing the same noise-independence throughout the regime \(d\lambda^2>1\) provided that each noisy boundary observation retains some information about its true label. 

For multiple labels, Chin and Sly~\cite{CS20} proved coordinatewise convergence of the noisy and noiseless posterior vectors, consequently showing that the reconstruction accuracies are asymptotically equivalent, under a sufficiently strong condition \(d\lambda^2 > C(q)\) and sufficiently accurate initial observations.  Their later work~\cite{CS26} treats a class of asymmetric broadcast models. Gu and Polyanskiy~\cite{GP23} extended the result of~\cite{YP24}, obtaining Potts belief-propagation uniqueness and stability results on regular and Poisson trees, including the sufficient condition \(d\lambda^2>1+C\max\{\lambda,q^{-1}\}\log q\) for a constant \(C\) independent of \(d, \lambda\), and \(q\). Their analysis also includes an extension to possibly asymmetric full-rank initial channels. These results address optimal inference beyond the binary setting, but their stated sufficient conditions do not cover fixed \(d,\lambda\) with \(d\lambda>1\) as \(q\to\infty\).

In contrast, for fixed \(d\) and \(\lambda\), our result applies throughout \(d\lambda>1\), including parameter values below the Kesten-Stigum bound, under the moment, depth, and noise assumptions stated below. In this growing-alphabet setting, \(d\lambda=1\) is the sharp threshold for nontrivial reconstruction, extending the corresponding threshold for \(d\)-ary trees established by Mossel~\cite{Mossel01}. By Theorem~\ref{thm:tree-theorem}, when \(d\lambda > 1\), the noiseless and noisy reconstruction accuracy converges to \(E_\infty^\star\), a quantity shown to be bounded away from zero. On the other hand, when \(d\lambda \leq 1\), with probability tending to one, there is no path from the root to level \(k\) consisting entirely of copy edges. On this event, every boundary label lies beyond at least one fresh redraw and therefore carries no information about the root label. Consequently, both the noiseless and noisy posteriors converge to the prior, and asymptotically one can do no better than a random guess.

The growing-alphabet setting is also motivated by the sparse symmetric stochastic block model (SBM)~\cite{HLL83} when the number of communities, \(q_n = q\), grows with the number of vertices, \(n\). We focus here on the assortative case, corresponding to the ferromagnetic Potts broadcast model considered in this paper. For fixed \(q\), as the number of vertices tends to infinity, a local neighborhood of a uniformly chosen vertex, together with its community labels, converges to a \(q\)-state Potts broadcast process on a Poisson Galton-Watson tree~\cite{CS20, CS26, MNS15, MNS16, MSS23}. This correspondence closely links reconstruction on the tree and recovery in the SBM. For \(q=2, d\lambda^2=1\) is sharp for both tree reconstruction~\cite{BRZ95, EKPS00} and SBM weak recovery~\cite{Massouli14, MNS15, MNS18}. For \(q\geq 5\), reconstruction below the KS bound is possible on regular trees~\cite{Sly11} and a broad class of Galton-Watson trees~\cite{MSS23}. On the graph side, information-theoretic recovery below the KS bound was established independently by Abbe and Sandon~\cite{AS18} for \(q\geq5\) and Banks, Moore, Neeman, and Netrapalli~\cite{BMNN16} for \(q\geq 11\). In the growing-community regime, the relevant information-theoretic scale in both settings is instead \(d\lambda=\Theta(1)\). We show that the tree reconstruction threshold is \(d\lambda=1\), while Chin, Mossel, Sohn, and Wein~\cite{CMSW25} identify the \(d\lambda=\Theta(1)\) scale of the information-theoretic threshold for SBM recovery in the growing-community regime, proving impossibility when \(d\lambda<1\) for \(q=o(n)\) and an exponential-time algorithm succeeding when \(d\lambda>14\) for \(q=\omega(1)\).

\subsection{Main Results}\label{subsec:main-results}

Let $(T, \rho)$ denote a rooted tree. Let \(T_{\leq k} := \{v\in T: d(v, \rho)\leq k\}\) denote the subtree of vertices \(T\) with edge distance at most \(k\) from the root. Let $L_k(u)$ denote the $k$th-level descendants of $u$. 

\begin{defin}[Galton-Watson Tree]
Let \(\Phi\) be the probability generating function (PGF) of an \(\nn_0\)-valued random variable with finite first moment. A \emph{Galton-Watson tree}, \((T,\rho)\), is a random rooted tree constructed from a root \(\rho\), where each vertex \(u\), independently of all other vertices, has \(D_u\) children, with \(D_u\stackrel{\mathrm{i.i.d.}}{\sim}\Phi\).
\end{defin}

\begin{defin}[\(q\)-state Potts Broadcast Model]
The \emph{\(q\)-state Potts broadcast model} is the broadcasting of \(q\)-state spins on the vertices of a Galton-Watson tree. In particular, let \((T,\rho)\) be a Galton-Watson tree, \(q\in\nn\), and \(\lambda\in(0,1)\). Let \(\tau^{(q)}=\tau\in[q]^{|T|}\) denote the labels on the tree, where \(\tau_u\in[q]\) denotes the label of vertex \(u\in T\). The root label is generated uniformly at random in \([q]\). Conditional on the parent labels, labels are propagated independently along the edges according to the \(q\)-state Potts channel \[M^{(q)} = M=\lambda I+\frac{1-\lambda}{q}\mathbf{1}\mathbf{1}^\top.\] That is, for every parent-to-child edge \(u\to v\), \[\Pr(\tau_v=j\mid \tau_u=i)=M_{ij}.\]
\end{defin}

We note that \(\lambda\) is the nontrivial eigenvalue of \(M\), and as \(q\to\infty\), \(M_{ii}\to \lambda\).

Every vertex \(u\in L_k(\rho)\) receives a noisy label based on its true label and noise matrix \(\Delta^{(q)} = \Delta\in[0,1]^{q\times q}\). The noisy label of vertex \(u\) is denoted with \(\tilde\tau_u^{(q)} = \tilde\tau_u\), where \[\Pr(\tilde\tau_u = j| \tau_u = i) = \Delta_{ij}.\] Clearly, the noise matrix must satisfy \(\sum_{j=1}^q \Delta_{ij} = 1\) for all \(i\in [q]\). We assume that the noisy labels are generated according to the noise matrix independently for each vertex. We refer to \(\tau_u\) as the true or noiseless label and to \(\tilde{\tau}_u\) as the noisy or perturbed observation. An observation is correct if \(\tilde{\tau}_u=\tau_u\) and erroneous otherwise.

\begin{defin}[Tree Reconstruction Accuracy/Robust Tree Reconstruction Accuracy]\label{defin:tree-recon-acc}
Let \[X_{q,k}(i) := \Pr(\tau_\rho=i | T_{\leq k}, \tau_{L_k(\rho)})\] and \[W_{q,k}^{\Delta}(i) = W_{q,k}(i) := \Pr(\tau_\rho=i | T_{\leq k},\tilde\tau_{L_k(\rho)})\] be the posterior probability of the root vertex having true label \(i\in[q]\) given the true and observed, respectively, depth-\(k\) labels. Then, the maximum likelihood estimate (equivalently, Bayes' estimator) of the root label given the true and observed depth-\(k\) labels can be defined as \[\hat\tau_\rho^{(q)}(k) = \hat\tau_\rho(k) := \argmax_i X_{q, k}(i)\] and \[\hat{\tilde \tau}_\rho^{(q)}(k) = \hat{\tilde \tau}_\rho(k) := \argmax_i W_{q, k}(i),\] respectively.

The \emph{tree reconstruction accuracy} is the probability of predicting the correct label given the true labels: \[p_T(\Phi,\lambda, q, k)= \Pr(\hat\tau_\rho(k) = \tau_\rho) = \ee\left[\max_i X_{q, k}(i)\right].\] Similarly, the \emph{robust tree reconstruction accuracy} is the probability of predicting the correct label given the noisy labels: \[\tilde p_T(\Phi,\lambda, q, k, \Delta^{(q)})  = \Pr(\hat{\tilde\tau}_\rho(k) = \tau_\rho) = \ee\left[\max_i W_{q, k}(i)\right].\] 
\end{defin}

When clear from the context, we will often write \(p_T(q)\) and \(\tilde p_T(q) = \tilde p_T(q, \Delta^{(q)})\) to mean \(p_T(\Phi, \lambda, q, k_q)\) and \(\tilde p_T(\Phi,\lambda, q, k_q, \Delta^{(q)})\), respectively. 

Let \[a_q := \min_i \Delta_{ii}\text{ and }b_q :=\max_{i\neq j}\Delta_{ij}.\]

Two separate sufficient regularity conditions on the noise channel are given in Assumptions~\ref{assump:noise-matrix} and~\ref{assump:noise-matrix-2}. The two assumptions reflect a balance between the growth of the inherited signal from observed labels matching the true root label and the accumulation of erroneous observations. 

Suppose \(d\lambda > 1\), and let \(\zeta := \frac{\log d}{\log(d\lambda)}\). Consider the depth-\(k\) descendants of the root. In expectation, we have \(d^k\) such descendants, while about \((d\lambda)^k=(d^k)^{1/\zeta}\) are connected to the root by copy edges and hence carry its label. Call these vertices good. After applying the boundary noise, at least around \(a_q(d\lambda)^k\) of these copies remain correctly observed. For \(a_q(d\lambda)^k\) to be at least of order one--since otherwise the signal sent by good vertices whose labels are correctly observed vanishes--we require roughly \(d^k\asymp a_q^{-\zeta}\) observations. Among these observations, any fixed incorrect label can be erroneously produced by the boundary noise with expected number at most of order \(b_qd^k \asymp b_q a_q^{-\zeta}\), suggesting the basic separation \(b_q\ll a_q^\zeta\). The condition \(a_q(d\lambda)^{k_q}\to\infty\) then ensures that the tree is deep enough to sufficiently amplify the signals in the boundary. 

\begin{assumption}\label{assump:noise-matrix}
Let \(\Phi\) be a PGF of an \(\nn_0\)-valued random variable with finite mean \(\Phi'(1) = d\). Let \(\lambda\in(0,1)\) and \(\zeta := \frac{\log d}{\log (d\lambda)}\). Consider the sequences \((k_q)_{q\in\nn}\) of depth and \((\Delta^{(q)})_{q\in\nn}\) of noise matrices. We say that \(\Phi, \lambda, (k_q)\), and \((\Delta^{(q)})\) satisfy Assumption~\ref{assump:noise-matrix} if \begin{enumerate}
    \item \(d\lambda > 1\),
    \item \(a_q(d\lambda)^{k_q} \to \infty\), and
    \item there exists some integer \(s > \zeta\) such that \(\ee D^s < \infty\) and \(\frac{b_q^{2s-1}}{a_q^{2s\zeta}} \to 0\)
\end{enumerate} as \(q\to\infty\).
\end{assumption}

\begin{assumption}\label{assump:noise-matrix-2}
Let \(\Phi\) be a PGF of an \(\nn_0\)-valued random variable with finite mean \(\Phi'(1) = d\). Let \(\lambda\in(0,1)\) and \(\zeta := \frac{\log d}{\log (d\lambda)}\). Consider the sequences \((k_q)_{q\in\nn}\) of depth and \((\Delta^{(q)})_{q\in\nn}\) of noise matrices. We say that \(\Phi, \lambda, (k_q)\), and \((\Delta^{(q)})\) satisfy Assumption~\ref{assump:noise-matrix-2} if \begin{enumerate}
    \item \(d\lambda > 1\),
    \item \(\Phi\) has finite moments of every order,
    \item \(a_q(d\lambda)^{k_q} \to \infty\), and
    \item there exists some \(\eps > 0\) such that \(b_q = o(a_q^{\zeta +\eps})\)
\end{enumerate} as \(q\to\infty\).
\end{assumption}

Consider the following broadcast process with label space \([0,1]\) on a Galton-Watson tree. The root receives a uniform label in \([0,1]\), and independently along each edge, the child copies its parent's label with probability \(\lambda\), and otherwise receives a fresh \(\mathrm{Unif}[0,1]\) label. The model is almost surely ``collision-free," meaning with probability one, all mutated labels will be distinct from existing true labels. Let \[E_k^\star := \sup_{\hat\tau} \Pr\left( \hat\tau(T_{\le k},\tau_{L_k}^\star)=\tau_\rho^\star \right)\] denote the Bayes-optimal reconstruction probability at depth \(k\), where \(\tau^\star\in [0,1]^{|T|}\) denotes the labels in this continuous model. Since \(E_k^\star\) is nonincreasing in \(k\) and bounded, define \[E_\infty^\star(\Phi,\lambda) = E_\infty^\star := \lim_{k\to\infty} E_k^\star.\] We study this collision-free continuum model in detail in Section~\ref{sec:bayes-random-tree}.

\begin{thm}\label{thm:tree-theorem}
If \(\Phi, \lambda, (k_q)\), and \((\Delta^{(q)})\) satisfy Assumption~\ref{assump:noise-matrix}, then \[\ee\|X_{q, k_q} - W_{q, k_q}\| \to 0\] as \(q\to\infty\), where \(\|\cdot\|\) denotes the total variation norm, i.e., \(\|\mu - \nu \| = \sup_A|\mu(A)-\nu(A)|\). In particular, \[p_T(q), \tilde p_T(q, \Delta)\to E_\infty^\star\] as \(q\to\infty\).
\end{thm}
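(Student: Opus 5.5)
Both posteriors will be compared with a common, simpler object: the posterior obtained from a ``collision-resolved'' summary of the boundary, which behaves like the collision-free continuum model of Section~\ref{sec:bayes-random-tree}. Concretely, a label is \emph{certified} at an internal vertex \(u\) if it appears among the boundary descendants in at least two distinct child-subtrees of \(u\) (for the noisy boundary, we require enough repetitions to beat the noise). The plan has three steps. First, in the noiseless model we show that with probability \(1-o(1)\) no boundary label arises from two independent mutations in different child-subtrees of a common ancestor with enough multiplicity to mimic inheritance. Second, on this event \(X_{q,k_q}\) is within \(o(1)\) in total variation of the continuum posterior computed from the induced partition of the boundary into ``same-label'' classes. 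Third, we prove the analogous statement for \(W_{q,k_q}\). The triangle inequality then gives \(\ee\|X_{q,k_q}-W_{q,k_q}\|\to0\). Since \(\ee\max_i\) is \(1\)-Lipschitz in total variation and \(E_k^\star\to E_\infty^\star\), and since \(k_q\to\infty\), this also gives \(p_T(q),\tilde p_T(q)\to E_\infty^\star\).

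For the noiseless comparison, we first truncate depth: the root posterior essentially depends only on the top \(m\) levels of the copy-edge tree, for \(m\) fixed but large. The next levels are then used only to certify the labels of the depth-\(m\) vertices. Because \(d\lambda>1\) and \(k_q\to\infty\), each depth-\(m\) vertex whose copy-subtree survives has \(\gtrsim (d\lambda)^{k_q-m}\to\infty\) boundary copies of its label. Meanwhile, a fixed label \(j\) is produced by fresh mutations among \(\approx d^{k_q}\) boundary vertices with probability of order \(d^{k_q}/q\) each. This is a problem only if \(d^{k_q}\gtrsim q\), so we handle spurious coincidences by thresholds. A label counts as certified only if its boundary count in a subtree is at least \(c(d\lambda)^{k_q-m}\). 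A Chernoff/moment bound then shows that a mutated label reaches such a count only via its own copy subtree. This is exactly the structure of the continuum model, and the continuum posterior is a function of these certified classes.

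For the noisy comparison, each depth-\(m\) copy cluster of size \(N\approx (d\lambda)^{k_q-m}\) yields about \(a_qN\to\infty\) correctly observed labels, by assumption~(2) and concentration. Spurious counts for a wrong label \(j\) in a subtree with \(n\approx d^{k_q-m}\) boundary vertices have mean at most \(b_q n\). Here we need the \(s\)-th moment: the count of erroneous outputs equal to \(j\) has a centered \(s\)-th moment controlled by \(\ee D^s<\infty\) (through Galton–Watson size moments). A Markov bound of order \((b_qn)^{s}/(a_qN)^{s}\) plus lower-order terms, followed by a union bound over the labels actually present and the depth-\(m\) vertices, then gives an error of order \(b_q^{2s-1}/a_q^{2s\zeta}\) after optimizing. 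This vanishes by Assumption~\ref{assump:noise-matrix}(3), using \(n\approx N^{\zeta}\). Consequently, with high probability the thresholded noisy boundary yields the same certified classes at depth \(m\) as the noiseless one, and both posteriors are close to the same continuum posterior.

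The main obstacle is the second comparison in the other direction: showing that the exact Bayes posteriors \(X\) and \(W\), not just our thresholded estimators, are close to the certified-class posterior. For this we plan to use a sufficiency/data-processing sandwich. The certified summary is a function of the boundary, so its posterior is a coarsening. Conversely, conditioned on the certified summary, the residual boundary information about \(\tau_\rho\) is \(o(1)\). We would show the latter by bounding the likelihood ratio contributed by uncertified labels, which are nearly uniform given the summary, using the same moment estimates. Letting \(m\to\infty\) slowly after \(q\to\infty\) closes the argument.
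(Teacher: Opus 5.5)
There is a genuine gap. Your plan certifies the depth-\(m\) labels directly from the depth-\(k_q\) boundary, and this fails when \(k_q\) is large. Assumption~\ref{assump:noise-matrix} bounds \(k_q\) only from below, so \(k_q\) may be arbitrarily large. Take a subtree of height \(\ell=k_q-m\). A fixed wrong label is produced by independent mutations at about \(d^{\ell}/q\) boundary vertices, and by the noise at about \(b_qd^{\ell}\). The inherited signal is only of order \((d\lambda)^{\ell}\), respectively \(a_q(d\lambda)^{\ell}\). Since \(d>d\lambda\), the signal-to-background ratios \(q\lambda^{\ell}\) and \((a_q/b_q)\lambda^{\ell}\) tend to zero once \(\ell\) is large compared with \(\log q\) or \(\log(a_q/b_q)\).

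In that regime essentially every label clears your threshold \(c(d\lambda)^{k_q-m}\) in every child-subtree. So the event in your first step has probability tending to \(0\), not \(1\), and the finite-\(q\) equality pattern at depth \(k_q\) bears little relation to the continuum one. When \(d\lambda^2<1\), even the fluctuations of the background counts, of order \(\sqrt{d^{\ell}/q}\), exceed \((d\lambda)^{\ell}\). Comparing counts across labels instead of against an absolute threshold therefore does not rescue the argument.

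The same problem hits your noisy estimate. The Markov bound \((b_qn/(a_qN))^s\) with \(n\approx N^{\zeta}\) equals \((b_qN^{\zeta-1}/a_q)^s\), which blows up for \(N\gg 1/a_q\). There is no free parameter to ``optimize'', because \(N\) is fixed by \(k_q-m\). The condition \(b_q\ll a_q^{\zeta}\) balances signal against noise only at the single scale \(N\asymp 1/a_q\).

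The missing idea is to decode locally, in blocks of controlled height, and to treat the decoded labels as a new noisy boundary. The paper proceeds as follows.
\begin{enumerate}
\item It thins the observations to exact retention \(a_q\).
\item In blocks of height \(h_q=\lceil\log(1/a_q)/\log(d\lambda)\rceil+H\), it applies the rule requiring \(2s\) occurrences in each of two child-subtrees. This gives labels at depth \(r_q=k_q-h_q\to\infty\) through an effective channel \(\hat\Delta\) with \(\min_i\hat\Delta_{ii}\ge c_1/2\) and \(\max_{i\ne j}\hat\Delta_{ij}\lesssim 1/q+b_q^{2s-1}/a_q^{2s\zeta}\to0\).
\item In this constant-retention regime, it iterates fixed-height \(h_0\)-decoders, whose channel class is stable (\(b\mapsto b/2+2/q\)). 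This continues down to a depth \(R_q\) with \(d^{2R_q}(q^{-1}+b_q)\to0\).
\item Only at depth \(R_q\) is the finite-\(q\) model coupled to the continuum and the pruning estimates used.
\end{enumerate}
This reduction is needed for the noiseless half too: \(p_T(q)\to E_\infty^\star\) for general \(k_q\) comes from the same reduction with \(\Delta=I\).

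Your ``main obstacle'' is also resolved differently in the paper; your likelihood-ratio sketch, as written, is not yet an argument. By the Markov property, \(\ee[X_{q,r}\mid\cF]=X_{q,k}\) and \(\ee[X_{q,r}\mid\tilde\cF]=W_{q,k}\). Conditional Jensen then gives \(\ee\|X_{q,k}-W_{q,k}\|\le 2\,\ee\|X_{q,r}-H\|\) for any \(H\) measurable with respect to the noisy data and auxiliary randomness. The exact deep-boundary posteriors never have to be compared with a certified-class posterior directly.
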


\begin{rem}
Although \(E_\infty^\star(\Phi,\lambda)\) is defined through the infinite-depth continuum model, Section~\ref{sec:bayes-random-tree} gives explicit nontrivial lower and upper bounds.
\end{rem}

Since any \(\Phi, \lambda, (k_q)\), and \((\Delta^{(q)})\) satisfying Assumption~\ref{assump:noise-matrix-2} satisfies Assumption~\ref{assump:noise-matrix}, we obtain the following corollary. Indeed, as \(s \to \infty\), we have that \(\frac{2s-1}{2s}\to 1\), so choosing \(s\) large enough so that \(\frac{2s\zeta}{2s-1} < \zeta +\eps\), then \(b_q^{2s-1} = o(a_q^{(2s-1)(\zeta+\eps)}) = o(a_q^{2s\zeta})\), so Assumption~\ref{assump:noise-matrix} follows.

\begin{cor}\label{cor:tree-cor}
If \(\Phi, \lambda, (k_q)\), and \((\Delta^{(q)})\) satisfy Assumption~\ref{assump:noise-matrix-2}, then \[\ee\|X_{q, k_q} - W_{q, k_q}\|\to 0 \text{ and }p_T(q), \tilde p_T(q, \Delta)\to E_\infty^\star\] as \(q\to\infty\).
\end{cor}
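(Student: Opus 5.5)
The corollary follows from Theorem~\ref{thm:tree-theorem} once we check that Assumption~\ref{assump:noise-matrix-2} implies Assumption~\ref{assump:noise-matrix} for the same \(\Phi,\lambda,(k_q),(\Delta^{(q)})\). Conditions (1) and (2) of Assumption~\ref{assump:noise-matrix} are conditions (1) and (3) of Assumption~\ref{assump:noise-matrix-2}. The remaining task is to produce an integer \(s>\zeta\) with \(\ee D^s<\infty\) and \(b_q^{2s-1}/a_q^{2s\zeta}\to 0\).

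Fix \(\eps>0\) as in condition (4), so that \(b_q=o(a_q^{\zeta+\eps})\). Since \(\frac{2s\zeta}{2s-1}=\zeta+\frac{\zeta}{2s-1}\downarrow\zeta\) as \(s\to\infty\), choose an integer \(s>\zeta\) large enough that \(\frac{2s\zeta}{2s-1}\le \zeta+\eps\). By condition (2), \(\Phi\) has finite moments of every order, so \(\ee D^s<\infty\).

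Raising \(b_q=o(a_q^{\zeta+\eps})\) to the power \(2s-1\ge 1\) gives
\[
b_q^{2s-1}=o\bigl(a_q^{(2s-1)(\zeta+\eps)}\bigr).
\]
Since \(a_q=\min_i\Delta_{ii}\in[0,1]\) and \((2s-1)(\zeta+\eps)\ge 2s\zeta\), we have \(a_q^{(2s-1)(\zeta+\eps)}\le a_q^{2s\zeta}\), so \(b_q^{2s-1}=o(a_q^{2s\zeta})\). This is the limit \(b_q^{2s-1}/a_q^{2s\zeta}\to 0\). For the ratio to make sense we need \(a_q>0\), at least for all large \(q\); this follows from condition (3), because \(a_q(d\lambda)^{k_q}\to\infty\) forces \(a_q>0\) eventually.

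Hence Assumption~\ref{assump:noise-matrix} holds, and Theorem~\ref{thm:tree-theorem} gives both \(\ee\|X_{q,k_q}-W_{q,k_q}\|\to0\) and \(p_T(q),\tilde p_T(q,\Delta)\to E_\infty^\star\). Every step is elementary; the only points needing care are the direction of the monotonicity \(a_q^{x}\le a_q^{y}\) for \(x\ge y\) and \(a_q\le1\), and the eventual positivity of \(a_q\), both handled above.
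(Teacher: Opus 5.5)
Your proposal is correct and follows the same route as the paper. You verify that Assumption~\ref{assump:noise-matrix-2} implies Assumption~\ref{assump:noise-matrix} by taking an integer \(s>\zeta\) large enough that \(\frac{2s\zeta}{2s-1}\le\zeta+\eps\), so that \(b_q^{2s-1}=o(a_q^{(2s-1)(\zeta+\eps)})=o(a_q^{2s\zeta})\) using \(a_q\le 1\), and then invoke Theorem~\ref{thm:tree-theorem}. Your explicit check that \(a_q>0\) eventually, from \(a_q(d\lambda)^{k_q}\to\infty\), is a small detail the paper leaves implicit.
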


\begin{rem}
Since Poisson random variables and bounded offspring distributions have finite moments of every order, Assumption~\ref{assump:noise-matrix-2} applies to both of these standard classes of Galton-Watson trees.
\end{rem}

A noteworthy feature of Theorem~\ref{thm:tree-theorem} and Corollary~\ref{cor:tree-cor} is that the label-retention probability, the probability of retaining the true labels after applying the noise, need not be bounded away from zero. In particular, \(a_q\) may vanish as \(q\to\infty\), and each individual noisy leaf label may become increasingly unlikely to equal its true label. Thus, sufficiently many descendants can collectively amplify a signal that becomes arbitrarily weak at the level of any single observed vertex, provided \(a_q(d\lambda)^{k_q}\to\infty\) and the off-diagonal noise is sufficiently spread out across incorrect labels. We may consider the following example.

\begin{example}
Suppose \(\Phi\) has finite moments of every order and \(d\lambda > 1\). Take \(\Delta_{ii} = q^{-\theta}\) for all \(i\in[q]\) and \(\Delta_{ij} = \frac{1-q^{-\theta}}{q-1}\) for all \(i\neq j\), where \(\theta >0\). If \(\theta < \frac{1}{\zeta} = \frac{\log(d\lambda)}{\log d}\) and \(k_q -\frac{\theta}{\log(d\lambda)}\log q\to\infty,\) then by Corollary~\ref{cor:tree-cor}, the Bayes-optimal reconstruction accuracies are the same given the noiseless and noisy labels.
\end{example}

\subsection{Proof Idea}
We first assume that each true boundary label is retained with probability at least a fixed positive constant, and then reduce the general case to this setting.

We first study the continuum model in Section~\ref{sec:bayes-random-tree}, in which fresh labels are uniform on \([0,1]\) and distinct almost surely, while the copying rule remains unchanged. We show in Lemma~\ref{lem:two-witness} that a label appearing at two boundary vertices in different child-subtrees of a vertex determines that vertex's label exactly. We call such a pair ``two witnesses." Similar arguments using the same label in two different branches appear in~\cite{MS04, KMS16}.

To approximate the full root posterior, we summarize the boundary information by labels identified near the root. Starting at the root, we stop whenever two witnesses identify a vertex's label, retaining that label and discarding its descendant subtree. We truncate unresolved branches at depth \(m\), assign the uniform prior there, and combine these distributions upward using Bayes' rule.

Intuitively, for large \(m\), the labels identified before depth \(m\) should capture almost all the boundary information relevant to the root posterior. Suppose a vertex \(u\) at depth less than \(m\) is connected to the root by copy edges. If two paths of copy edges lead from \(u\) to the boundary through different children of \(u\), their boundary labels provide two witnesses identifying \(u\)’s label. Since \(u\) carries the root’s label, pruning retains that value. Thus, for the root label to appear at the boundary yet be discarded, all copy-edge paths from the root to the boundary must share their first \(m\) edges. The event that at least one such path exists and all such paths share these edges has probability at most \(\alpha^m\), for some constant \(\alpha\in(0,1)\). We show that its probability bounds the expected total variation error of the approximation.

The same approximation can be constructed from noisy observations. Conditional on the true labels, replacement draws are independent and atomless, so they create no false matches. If a vertex has copy-edge paths to arbitrarily large depths, it has increasingly many boundary copies of its label, making it unlikely that noise removes them all. Thus, for fixed \(m\), the noisy and noiseless pruning constructions agree with probability tending to one as the boundary depth grows. Together with the pruning estimate, then letting \(m\to\infty\) gives convergence of the full posteriors in expected total variation.

For finite \(q\), independent fresh redraws and erroneous boundary observations can create label equalities that do not reflect common copy ancestry. We call these accidental label collisions. Section~\ref{subsec:easy-noise-matrix} first treats the constant-retention regime, in which \(a_q \geq a_0 > 0\) and \(b_q \to 0\). On a tree with at most \(N_q\) vertices, a union bound over pairs gives a collision probability of at most \(O(N_q^2(q^{-1}+b_q))\). We therefore choose a neighborhood whose depth tends to infinity sufficiently slowly that it contains at most \(N_q\) vertices with high probability, where \(N_q^2(q^{-1}+b_q)\to0\). The neighborhood is then large enough for the continuum pruning approximation to become accurate, yet small enough that accidental collisions remain unlikely. A separate argument extends the result to arbitrary depths \(k_q\to\infty\) by estimating the labels on the boundary of this slowly growing neighborhood from the original noisy boundary observations, with errors satisfying the same constant-retention assumptions.

Finally, Section~\ref{subsec:bootstrap} compensates for vanishing retention by using more inherited copies. At distance \(h\), a vertex has \((d\lambda)^h\) descendants connected to it by copy edges in expectation. Each is correctly observed with probability at least \(a_q\), giving the signal scale \(a_q(d\lambda)^h\). We choose \(h_q\) so that this scale stays of constant order with a sufficiently large lower bound.

Greater depth also permits more misleading repetitions. We therefore require several occurrences of a candidate label in each of two distinct child-subtrees. One fresh redraw below the vertex can produce many copies within one child-subtree, but cannot account for occurrences in both. Moment estimates for the sizes of these families control misleading repetitions. Under Assumption~\ref{assump:noise-matrix}, this yields estimates at generation \(k_q-h_q\) whose correct-output probabilities are bounded away from zero and whose probabilities of any specified incorrect output vanish. Since \(a_q(d\lambda)^{k_q}\to\infty\) ensures \(k_q-h_q\to\infty\), a diverging depth remains on which to apply the constant-retention result of Section~\ref{subsec:easy-noise-matrix}.

\subsection{Acknowledgements}
The author thanks Youngtak Sohn for introducing the problem and for the helpful discussions on approaches.

\subsection{Statement of AI Use}
The author used OpenAI's ChatGPT to assist in checking and refining mathematical arguments, and with exposition and manuscript editing. The mathematical ideas, contributions, and final arguments are the author's, and the author takes full responsibility for the results and contents of the manuscript.

\section{Robust Reconstruction in the Continuum Model}\label{sec:bayes-random-tree}
Let \(T\) denote a Galton-Watson tree rooted at \(\rho\) with offspring PGF \(\Phi\) with a finite mean \(d = \Phi'(1)\). 

We study the continuum model introduced in Section~\ref{subsec:main-results}, in which distinct copy components receive distinct labels almost surely. This allows us to analyze reconstruction through repeated-label events. Section~\ref{subsec:easy-noise-matrix} will transfer the resulting estimates to the finite-alphabet model.

Let \(\mu\) be the Lebesgue measure on \([0,1]\). We generate the continuous broadcast process \((\tau_u^\star)_{u\in T}\) by \(\tau_\rho^\star\sim \mu\) and \[\tau_v^\star = \begin{cases}
    \tau_u^\star, &\text{with probability }\lambda,\\
    Y_v, Y_v\sim\mu, &\text{with probability }1-\lambda,
\end{cases}\] independently along every edge \(u\to v\). Since \(\mu\) is atomless, independently generated fresh labels are distinct almost surely. Consequently, if we declare an edge to be open whenever the child copies the parent, then almost surely, each connected component of the resulting open-edge forest receives a distinct label. On this a.s. event, \(\tau_u^\star = \tau_v^\star\) if and only if \(u\) and \(v\) belong to the same open component. This collision-free representation will allow us to characterize reconstruction through repeated-label events, specifically the two-witness event.

For the noisy observations, we introduce an auxiliary continuum channel having the same collision-free feature. The true label is retained with probability at least \(a_0 > 0\), while a noisy observation is replaced by an atomless random label. Importantly, the replacement law is allowed to depend on the true label. 

Let \(\tilde\tau^\star\) denote the noisy labels generated in accordance with some transition kernel \(\kappa\). In other words, for any Borel \(S\subset [0,1]\), \[\Pr(\tilde\tau_u^\star\in S | \tau_u^\star = x) = \kappa(x, S).\] The noise is generated independently for each vertex.

\begin{assumption}\label{assump:noise-kernel}
Let \(\Phi\) be a PGF of an \(\nn_0\)-valued random variable with finite mean \(\Phi'(1) = d\). Let \(\lambda\in(0,1)\). Let \(\kappa\) be some transition kernel. We say that \(\Phi, \lambda\), and \(\kappa\) satisfy Assumption~\ref{assump:noise-kernel} if \begin{enumerate}
    \item \(d\lambda > 1\),
    \item \(\Phi\) has finite second moment, and
    \item \[\kappa(x, S) = \Pr(\tilde\tau_u^\star\in S | \tau_u^\star = x) = a_0(x)\mathbb1\{x\in S\} + (1-a_0(x))\cdot\nu_x (S)\] for every Borel \(S\subset [0,1]\), where \(a_0(x)\geq a_0\) for all \(x\in[0,1]\), for some \(a_0\in(0,1]\), and \(x\mapsto \nu_x\) is a measurable Markov kernel on \([0,1]\) such that \(\nu_x\) is atomless for every \(x\in[0,1]\).
\end{enumerate}
\end{assumption}

Let $\hat{\tau}_{\rho}^\star(k)$ denote the maximum likelihood estimator of the root label $\tau_{\rho}^\star$ under the $[0, 1]$-valued broadcast process, given the observed labels at depth $k$. Similarly, let $\hat{\tilde{\tau}}_{\rho}^\star(k)$ denote the maximum likelihood estimator given the noisy observations in this continuous setting. 

For the continuum model, let \[E_k^\star := \Pr(\hat\tau_\rho^\star(k) = \tau_\rho^\star) = \sup_{\hat\tau}\Pr(\hat\tau(T, \tau_{L_k(\rho)}^\star) = \tau_\rho^\star)\] denote the Bayes-optimal probability from the true depth-\(k\) labels. Similarly, let \[\tilde E_k^\star := \Pr(\hat{\tilde\tau}_\rho^\star(k) = \tau_\rho^\star) = \sup_{\hat\tau}\Pr(\hat\tau(T, \tilde\tau_{L_k(\rho)}^\star) = \tau_\rho^\star)\] denote the Bayes-optimal probability from the noisy depth-\(k\) labels under Assumption~\ref{assump:noise-kernel}.

In this section, we will demonstrate that the optimal probabilities \(E_k^\star, \tilde E_k^\star\) coincide as \(k\to\infty\). Since \(E_k^\star\) is bounded and monotonically decreasing, the limit exists. As a result, we define \[E_\infty^\star := \lim_{k\to\infty} E_k^\star.\]

\subsection{Survival Probability}
\begin{defin}\label{defin:Z_rhok}
We define variables for the number of depth-\(k\) descendants carrying \(u\)'s label. \begin{align*}
    Z_{u, k} &= \sum_{v\in L_k(u)} \mathbb1(\tau_v^\star=\tau_u^\star),\\
    \tilde Z_{u, k} &= \sum_{v\in L_k(u)} \mathbb1(\tilde\tau_v^\star=\tau_u^\star).
\end{align*}
\end{defin}

We first study \(Z_{\rho, k}\). Recall that an edge is open whenever the child copies its parent's label. Since \(\mu\) is atomless, almost surely, a descendant has the same label as the root if and only if it is connected to the root by a path consisting entirely of open edges. We show that \((Z_{\rho, k})_{k\geq 0}\) is itself a Galton-Watson process.

Indeed, let \(D\sim\Phi\) denote the number of children of a vertex, and let \(R\) denote the number of its open children. Conditional on \(D\), \(R|D\sim\mathrm{Bin}(D, \lambda)\). Hence, the PGF of \(R\) is \[f(x) := \ee[x^R] = \Phi(1-\lambda + \lambda x)\] for any \(x\in[0,1]\). Its mean is \(f'(1) = \lambda\Phi'(1) = d\lambda > 1\), so \((Z_{\rho, k})_{k\geq 0}\) is a supercritical Galton-Watson process with offspring PGF \(f\).

Let \(\xi\in (0,1)\) denote its extinction probability, i.e., the probability that no true boundary label contains the true root label as the depth grows asymptotically. By Theorem I.5.1 of~\cite{AN72}, the extinction probability is the smallest solution of \[f(x)=x\] in \([0,1]\). Since \(f'(1)=d\lambda>1\), we have \(\xi<1\). In fact, \(f\) is strictly convex, so \(\xi\) is the unique solution to \(f(x)=x\) in \([0,1)\). Finally, \[\Pr(Z_{\rho,k}=0)=f^{\circ k}(0)\uparrow \xi.\]

Let \(\alpha:= f'(\xi) = \lambda\Phi'(1-\lambda + \lambda\xi)\). We observe that \(\alpha\in(0,1)\). Indeed, \(f\) is strictly increasing and strictly convex on \([0,1]\). Since \(f(\xi) = \xi, f(1) = 1\), and \(\xi < 1\), strict convexity implies \(f'(\xi) < \frac{f(1)-f(\xi)}{1-\xi} = 1\). Moreover, \(f'(\xi) > 0\).

We will also use the standard fact that the iterates of a noncritical Galton-Watson PGF converge geometrically to the extinction probability (see, e.g., Chapter I Section 11 of~\cite{AN72}). In particular, for every fixed \(x\in[0,1)\), there exist constants \(C_x > 0\) and \(\beta_x\in(0,1)\) such that \[|f^{\circ k}(x) - \xi| \leq C_x\beta_x^k\] for every \(k\in\nn\).

Define \(P_k := \Pr(Z_{\rho, k} = 0)\) and \(\tilde P_k(x) := \Pr(\tilde Z_{\rho, k} = 0 | \tau_\rho^\star=x)\) for \(x\in[0,1]\). In the noisy case, we condition on the true root label \(x\), since the transition kernel depends on \(\nu_x\), which need not be the same for each \(x\in[0,1]\).

\begin{lem}\label{lem:tildePk(x)=fk(1-a0x)}
For any \(x\in[0,1], k\in\nn_0\), and \(z\in\nn_0\), \[\tilde Z_{\rho,k} | (Z_{\rho, k}=z, \tau_\rho^\star = x)\sim\mathrm{Bin}(z, a_0(x)).\] Consequently, \[\tilde P_k(x) = f^{\circ k}(1-a_0(x)).\]
\end{lem}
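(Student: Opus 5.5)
Condition on the tree $T$ and the true labels $\tau^\star$, and work on the almost-sure collision-free event, where $\tau_v^\star = \tau_\rho^\star$ holds exactly when $v$ is joined to $\rho$ by open edges. Write $G_k := \{v\in L_k(\rho): \tau_v^\star = x\}$, so that $|G_k| = Z_{\rho,k}$.

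\textbf{Step 1: vertices in $G_k$.} For each $v\in G_k$, the event $\{\tilde\tau_v^\star = x\}$ has conditional probability $\kappa(x,\{x\})$. By Assumption~\ref{assump:noise-kernel}, this equals
\[
\kappa(x,\{x\}) = a_0(x) + (1-a_0(x))\,\nu_x(\{x\}) = a_0(x),
\]
because $\nu_x$ is atomless.

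\textbf{Step 2: vertices outside $G_k$.} Take $v\in L_k(\rho)\setminus G_k$, so $\tau_v^\star = y\neq x$. Then
\[
\Pr(\tilde\tau_v^\star = x \mid \tau_v^\star = y) = a_0(y)\,\mathbb1\{y=x\} + (1-a_0(y))\,\nu_y(\{x\}) = 0,
\]
since $y\neq x$ and $\nu_y$ is atomless. Hence only vertices in $G_k$ contribute to $\tilde Z_{\rho,k}$.

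\textbf{Step 3: the binomial law.} The noise acts independently across vertices given the true labels. So, conditionally on $(T,\tau^\star)$ with $Z_{\rho,k}=z$, the variable $\tilde Z_{\rho,k}$ is a sum of $z$ independent Bernoulli$(a_0(x))$ indicators plus almost surely zero terms. It is therefore $\mathrm{Bin}(z,a_0(x))$. This law depends only on $z$ and $x$, so averaging over $(T,\tau^\star)$ on the event $\{Z_{\rho,k}=z,\tau_\rho^\star=x\}$ gives the first claim.

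\textbf{Step 4: the generating-function identity.} The open-edge process $(Z_{\rho,k})$ is determined by $T$ and the copy/mutate coins. These are independent of $\tau_\rho^\star$, so $Z_{\rho,k}$ is independent of $\tau_\rho^\star$ and has PGF $f^{\circ k}$. Conditioning on $Z_{\rho,k}$ and using Step 3,
\[
\tilde P_k(x) = \ee\bigl[(1-a_0(x))^{Z_{\rho,k}}\bigr] = f^{\circ k}(1-a_0(x)).
\]
The case $k=0$ is immediate: here $Z_{\rho,0}=1$, and $f^{\circ 0}$ is the identity.

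The main point needing care is measure-theoretic. Conditioning on $\tau_\rho^\star = x$ is conditioning on a null event, so the statements should be read as holding for the regular conditional distribution for every $x$. This is justified by constructing the process with $\tau_\rho^\star = x$ fixed: the open/closed edge structure and the fresh labels are generated independently of $x$. The remaining small step is the null-set argument in Step 2, that a fresh label $y$ almost surely differs from $x$. This holds because $y$ is drawn from the atomless $\mu$ independently of $x$.
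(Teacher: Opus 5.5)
Your proof is correct and follows essentially the same route as the paper: the root's open component is the only possible source of observations equal to \(x\), by atomlessness of the replacement kernels; the binomial law then follows from conditional independence of the noise, and the identity follows from the Galton--Watson PGF \(f^{\circ k}\). You also make explicit two points the paper leaves implicit, namely that \(\kappa(x,\{x\}) = a_0(x)\) because \(\nu_x\) is atomless, and that \(Z_{\rho,k}\) is independent of \(\tau_\rho^\star\).
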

\begin{proof}
On the event \(\tau_\rho^\star = x\), every vertex in the open component of the root has true label \(x\). Such a vertex is observed as \(x\) exactly when its true label is retained, which occurs independently with probability \(a_0(x)\).

Almost surely, a vertex outside the open component of the root cannot produce an additional observed label equal to \(x\). Indeed, if its true label is \(y\neq x\), retaining the label gives \(y\), while a replacement draw equals \(x\) with probability \(\nu_y(\{x\})=0\) by atomlessness. Since the tree up to depth \(k\) is finite almost surely, these measure-zero events may be discarded simultaneously. Hence, \[\tilde Z_{\rho, k} | (Z_{\rho, k} = z, \tau_\rho^\star = x)\sim\mathrm{Bin}(z, a_0(x)).\] 

It follows that \(\tilde P_k(x) = \ee[(1-a_0(x))^{Z_{\rho,k}}] = f^{\circ k}(1-a_0(x))\), where the second equality follows from the standard PGF recursion for a Galton-Watson process.
\end{proof}

\begin{lem}\label{lem:gw-geometric-convergence-to-xi}
Let \(d\lambda > 1\). There exist constants \(C= C(\Phi,\lambda, a_0)> 0\) and \(\beta = \beta(\Phi,\lambda,a_0) \in (0,1)\) such that, for every \(k\in\nn\), \[|P_k-\xi|, \sup_{x\in[0,1]}|\tilde P_k(x)-\xi| \leq C\beta^k.\]
\end{lem}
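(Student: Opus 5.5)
The plan is to reduce both bounds to the geometric convergence of PGF iterates quoted just before Lemma~\ref{lem:tildePk(x)=fk(1-a0x)}, and to make the constants uniform in \(x\) using monotonicity of \(f\). First, \(P_k = f^{\circ k}(0)\). The quoted fact with \(x=0\) gives \(|P_k-\xi|\le C_0\beta_0^k\).

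Second, by Lemma~\ref{lem:tildePk(x)=fk(1-a0x)}, \(\tilde P_k(x) = f^{\circ k}(1-a_0(x))\). Since \(a_0(x)\ge a_0>0\), the argument \(1-a_0(x)\) lies in \([0,1-a_0]\). Now \(f\) is nondecreasing on \([0,1]\), being a PGF, so each iterate \(f^{\circ k}\) is nondecreasing as well. Hence for every \(x\),
\[
f^{\circ k}(0)\le \tilde P_k(x)\le f^{\circ k}(1-a_0).
\]
Because \(\xi\) is a fixed point, \(f^{\circ k}(\xi)=\xi\). This means \(\xi\) is also sandwiched between \(f^{\circ k}(0)\) and \(f^{\circ k}(1)=1\).

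To turn the sandwich into a bound on \(|\tilde P_k(x)-\xi|\), consider two cases.
\begin{itemize}
\item If \(1-a_0\le\xi\), then \(f^{\circ k}(0)\le \tilde P_k(x)\le \xi\), so \(|\tilde P_k(x)-\xi|\le|f^{\circ k}(0)-\xi|\).
\item If \(1-a_0>\xi\), then \(f^{\circ k}(0)\le\xi\le f^{\circ k}(1-a_0)\), so \(|\tilde P_k(x)-\xi|\le \max\{|f^{\circ k}(0)-\xi|,\,|f^{\circ k}(1-a_0)-\xi|\}\).
\end{itemize}
In either case
\[
\sup_x|\tilde P_k(x)-\xi|\le \max\{|f^{\circ k}(0)-\xi|,\,|f^{\circ k}(1-a_0)-\xi|\}.
\]

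Third, apply the quoted geometric fact at the two fixed points \(x=0\) and \(x=1-a_0\in[0,1)\). The point \(1-a_0\) lies in \([0,1)\) precisely because \(a_0>0\); this is where the assumption enters. Set \(C=\max(C_0,C_{1-a_0})\) and \(\beta=\max(\beta_0,\beta_{1-a_0})\). Both depend only on \(\Phi,\lambda,a_0\), and \(\beta\in(0,1)\), which gives the claim for every \(k\in\nn\).

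The only potential obstacle is whether one may simply invoke the quoted geometric-convergence fact, which holds for noncritical processes on \([0,1)\). If a self-contained argument is wanted, I would argue as follows.
\begin{itemize}
\item For \(y\in[\xi,1)\), convexity gives \(f(y)-\xi\le \frac{f(y_0)-\xi}{y_0-\xi}(y-\xi)\) for \(y\le y_0\). Here \(y_0=\max(1-a_0,\xi)\) and the slope ratio is strictly less than \(1\), since \(f(y)<y\) on \((\xi,1)\).
\item For \(y\in[0,\xi]\), the mean value theorem gives \(\xi-f(y)\le f'(\xi)(\xi-y)=\alpha(\xi-y)\), with \(\alpha<1\) as shown above.
\end{itemize}
Iterating these contractions gives the geometric rate directly. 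Uniformity in \(x\) is already handled by the monotone sandwich.
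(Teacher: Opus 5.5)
Your proof is correct and follows the paper's argument: both use \(P_k=f^{\circ k}(0)\), \(\tilde P_k(x)=f^{\circ k}(1-a_0(x))\), and the monotone sandwich \(f^{\circ k}(0)\le\tilde P_k(x)\le f^{\circ k}(1-a_0)\), together with geometric convergence of the iterates at the two endpoints \(0\) and \(1-a_0\in[0,1)\). Your explicit max-of-endpoints step fills in details the paper leaves implicit, as does your optional self-contained contraction argument; the case split on \(1-a_0\) versus \(\xi\) is unnecessary, since any sandwich \(A\le y\le B\) already gives \(|y-\xi|\le\max\{|A-\xi|,|B-\xi|\}\).
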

\begin{proof}
Since \(P_k = f^{\circ k}(0)\), the first inequality follows directly from the geometric convergence of the iterates of \(f\).

For the noisy case, Lemma~\ref{lem:tildePk(x)=fk(1-a0x)} gives \(\tilde P_k(x) = f^{\circ k}(1-a_0(x))\). Since \(a_0(x) \geq a_0\) and \(f\) is increasing, \[f^{\circ k}(0) \leq f^{\circ k}(1-a_0(x)) \leq f^{\circ k}(1-a_0).\] Both endpoints converge geometrically to \(\xi\), yielding our desired bound.
\end{proof}


\subsection{Bayes Reconstruction and the Pruning Approximation}\label{subsec:continuum-model}
In this subsection, we analyze the continuum broadcast model and develop a pruning approximation to the Bayes posterior. The quantitative estimates obtained here, particularly Lemmas~\ref{lem:E_k-S_km}-~\ref{lem:S_km-tilde S_km}, will be used in the finite-alphabet coupling argument of Section~\ref{subsec:easy-noise-matrix}. As an additional consequence, as we show in Proposition~\ref{prop:sec-tree-E_k=tilde-E_k}, they imply that the noisy and noiseless Bayes posteriors themselves become asymptotically equivalent in the continuum model.

Throughout this subsection, Assumption~\ref{assump:noise-kernel} holds, and we work on the probability-one event that distinct open components have distinct labels.

Conditionally independent given \((\tau_u^\star)_{u\in T}\), assume the noisy labels satisfy Assumption~\ref{assump:noise-kernel}, i.e., \[\tilde\tau_u^\star =\begin{cases}
    \tau_u^\star &\text{w.p. }a_0(\tau_u^\star),\\
    \tilde Y_u\sim \nu_{\tau_u^\star} &\text{w.p. }1 - a_0(\tau_u^\star),
\end{cases}\] where \(a_0(\tau_u^\star) \geq a_0\) and \(\nu_{\tau_u^\star}\) is an atomless distribution.

We observe that an a.s. sufficient condition for reconstructing the label is to observe at least two depth-\(k\) ``witness" vertices in two separate child-subtrees.

\begin{defin}(Least Common Ancestor)
Given two vertices \(u, v\in T\), the \emph{least common ancestor} (\emph{LCA}) is the common ancestor of greatest depth of \(u\) and \(v\). Denote the \emph{LCA} by \[\operatorname{LCA}(u,v),\] and the height of the LCA by \[h(u,v).\]
\end{defin}

\begin{defin}
Let \(\sigma\) be an arbitrary labeling on \(T\). Then, for any \(u\in T\), define \[J_{u,k}(\sigma) := \{j\in[0,1]: \exists v\neq w\in L_k(u)\text{ s.t. }\sigma_v=\sigma_w=j\text{ and }\operatorname{LCA}(v,w) = u\}.\] Equivalently, $j \in J_{u,k}$ if and only if the label $j$ appears in at least two descendants at distance $k$ from $u$ belonging to distinct child-subtrees of $u$.
\end{defin}

\begin{lem}\label{lem:two-witness}
Suppose \(\sigma\) is either \(\tau^\star\) or \(\tilde\tau^\star\), and fix $u \in T$. Then:
\begin{enumerate}
    \item Almost surely, $J_{u,k}(\sigma) \subseteq \{\tau_u^\star\}$. In particular, $|J_{u,k}(\sigma)| \leq 1$ a.s.
    \item If $J_{u,k}(\sigma) = \{j\}$, then $\tau_u^\star = j$ almost surely. 
\end{enumerate}
\end{lem}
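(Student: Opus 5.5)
Part (2) follows immediately from part (1): if $J_{u,k}(\sigma)=\{j\}$ and $J_{u,k}(\sigma)\subseteq\{\tau_u^\star\}$, then $j=\tau_u^\star$. So the work is in part (1). The plan is to show that, almost surely, any two vertices $v\neq w\in L_k(u)$ with $\operatorname{LCA}(v,w)=u$ and $\sigma_v=\sigma_w$ satisfy $\sigma_v=\tau_u^\star$. Since $T_{\le k}$ restricted to the descendants of $u$ is almost surely finite, there are countably many (in fact finitely many, given $T$) such pairs. It therefore suffices to fix a pair and show that the bad event has probability zero, and then take a union bound.

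\textbf{Noiseless case.} Take $\sigma=\tau^\star$ and work on the probability-one event that distinct open components carry distinct labels. Then $\tau_v^\star=\tau_w^\star$ forces $v$ and $w$ to lie in the same open component. In a tree, the unique path between $v$ and $w$ passes through $\operatorname{LCA}(v,w)=u$. An open component is a connected subgraph of the tree, so it contains the whole path, and in particular $u$. Hence $\tau_v^\star=\tau_w^\star=\tau_u^\star$.

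\textbf{Noisy case.} Take $\sigma=\tilde\tau^\star$ and condition on $T$ and the true labels $\tau^\star$. For a fixed pair $v\neq w$, the observation $\tilde\tau_v^\star$ is either $\tau_v^\star$ or an independent draw from the atomless law $\nu_{\tau_v^\star}$, and likewise for $w$, independently. We split into four cases.
\begin{enumerate}
\item If at least one of the two observations is a replacement draw, say $\tilde\tau_v^\star$, then, conditionally on everything else (including $\tilde\tau_w^\star$), the probability that $\tilde\tau_v^\star=\tilde\tau_w^\star$ is $\nu_{\tau_v^\star}(\{\tilde\tau_w^\star\})=0$. This covers both the case where exactly one observation is replaced and the case where both are: in the latter, condition on one of them and use atomlessness of the other's law.
\item If both labels are retained, then $\tilde\tau_v^\star=\tilde\tau_w^\star$ means $\tau_v^\star=\tau_w^\star$, and the noiseless argument above gives that this common value is $\tau_u^\star$.
\end{enumerate}
Integrating over the conditioning and taking a union bound over the almost surely finitely many pairs gives $J_{u,k}(\tilde\tau^\star)\subseteq\{\tau_u^\star\}$ almost surely. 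In particular $|J_{u,k}(\sigma)|\le 1$ in both cases.

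The only subtlety is measurability, together with the fact that the number of pairs is random. I would handle this by conditioning on $T_{\le k}$, so that the pairs form a fixed finite set, and then applying the argument above to each pair. No step here is a real obstacle; the key structural fact is that open components are connected and that any path between $v$ and $w$ passes through their LCA.
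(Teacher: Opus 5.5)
Your proof is correct and follows essentially the same route as the paper. In the noiseless case, equal labels at $v,w$ force pure copying along both paths to $u=\operatorname{LCA}(v,w)$: you phrase this via the a.s.\ collision-free open-component representation, the paper via mutations along the two edge-disjoint paths. In the noisy case, conditional independence plus atomlessness of the $\nu_x$ reduces everything to the noiseless case, with a union bound over the a.s.\ finitely many pairs. Your split into retained versus replaced observations is slightly tighter than the paper's split on whether $\tau_v^\star\neq\tau_w^\star$, because it explicitly rules out $\tau_v^\star=\tau_w^\star$ with $\tilde\tau_v^\star=\tilde\tau_w^\star\neq\tau_v^\star$, which the paper's stated reduction ``$\tilde\tau_v^\star=\tilde\tau_w^\star\Rightarrow\tau_v^\star=\tau_w^\star$'' leaves implicit.
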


\begin{proof}
We first show the case when \(\sigma=\tau^\star\). Let \(J_{u,k} = J_{u,k}(\tau^\star)\).

Let $v\neq w\in L_k(u)$ lie in distinct child-subtrees of $u$ and satisfy $\tau_v^\star=\tau_w^\star=j$. The $u\!\to\!v$ and $u\!\to\!w$ paths are edge-disjoint. 

If $j\neq \tau_u^\star$, then along each path there must be a first mutation whose new value equals the same fixed $j$; two independent $\operatorname{Unif}[0,1]$ draws coincide at $j$ with probability $0$. If $j=\tau_u^\star$ but equality to $\tau_u^\star$ is ever broken and later regained along a path, some mutation must draw exactly $\tau_u^\star$, again with probability $0$. Hence, the only positive-probability mechanism is pure copying of $\tau_u^\star$ along both paths, i.e., $j=\tau_u^\star$. This proves $J_{u,k}\subseteq\{\tau_u^\star\}$ a.s., and if $J_{u,k}=\{j\}$ then $\tau_u^\star=j$ a.s.

For the case when \(\sigma=\tilde\tau^\star\), condition on the true labels \(\tau^\star\). Under Assumption~\ref{assump:noise-kernel}, the noises at different vertices are conditionally independent, and their continuous components have no atoms. Thus, for any distinct \(v,w\) with \(\tau_v^\star \neq \tau_w^\star\), we have \(\Pr(\tilde\tau_v^\star = \tilde\tau_w^\star \mid \tau^\star) = 0\). Indeed, a replacement draw cannot equal a prescribed retained label with positive probability, and two independent replacement draws from \(\tau_w^\star\) and \(\tau_v^\star\) coincide with probability \[\int \nu_{\tau_w^\star}(\{z\})\nu_{\tau_v^\star}(dz) =0.\] Since the tree up to depth-\(k\) is finite a.s., it follows that almost surely every equality \(\tilde\tau_v^\star = \tilde\tau_w^\star\) implies \(\tau_v^\star = \tau_w^\star\). Combining this with the established \(\tau^\star\)-case yields \(J_{u,k}(\tilde\tau^\star) \subseteq \{\tau_u^\star\}\) a.s., and in particular \(J_{u,k}(\tilde\tau^\star) = \{j\}\) implies \(\tau_u^\star = j\) a.s.
\end{proof}

Let \(A_k:=\{J_{\rho,k}(\tau^\star)\neq\emptyset\}\) and \(\tilde A_k:=\{J_{\rho,k}(\tilde\tau^\star)\neq\emptyset\}.\) Define \[\omega(x) := 1-f(x) - (1-x)f'(x)\] for \(x\in[0,1)\).

\begin{lem}\label{lem:lim-of-two-witness}
If $d\lambda>1$, then \[\Pr(A_k) = \omega(P_{k-1})\text{ and }\Pr(\tilde A_k | \tau_\rho^\star = x) = \omega(\tilde P_{k-1}(x)).\] Moreover, there exists \(C > 0\) and \(\gamma \in(0,1)\) such that \[\left|\Pr(A_k) - (1-\xi)(1-\alpha)\right|, \sup_{x\in[0,1]} \left|\Pr(\tilde A_k|\tau_\rho^\star = x) - (1-\xi)(1-\alpha)\right| \leq C\gamma^k.\] Consequently, \(\Pr(A_k)\) and \(\Pr(\tilde A_k|\tau_\rho^\star=x)\) converge to \((1-\xi)(1-\alpha)\).
\end{lem}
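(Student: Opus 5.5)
By Lemma~\ref{lem:two-witness}, on the almost-sure collision-free event $A_k$ holds exactly when at least two distinct children $c$ of $\rho$ are open and their open components reach depth $k$. Such a child carries $\tau_\rho^\star$ and has $Z_{c,k-1}>0$. Conversely, any equality across two distinct child-subtrees must come from pure copying of $\tau_\rho^\star$ along both paths.

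Condition on $D$ and on which children are open. The open children's subtrees are independent, and each has $Z_{c,k-1}=0$ with probability $P_{k-1}$. If $R$ denotes the number of open children, then
\[
\Pr(A_k\mid R)=1-P_{k-1}^R-R(1-P_{k-1})P_{k-1}^{R-1}.
\]
Taking expectations and using $f(x)=\ee x^R$ and $f'(x)=\ee[Rx^{R-1}]$ gives $\Pr(A_k)=1-f(P_{k-1})-(1-P_{k-1})f'(P_{k-1})=\omega(P_{k-1})$.

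The noisy case is identical after conditioning on $\tau_\rho^\star=x$. By the same argument as in Lemma~\ref{lem:tildePk(x)=fk(1-a0x)}, noisy observations equal to $x$ arise almost surely only from retained labels in the root's open component. So $\tilde A_k$ is the event that at least two open children $c$ have $\tilde Z_{c,k-1}>0$. These events are conditionally independent across children, each failing with probability $\tilde P_{k-1}(x)$, which yields $\omega(\tilde P_{k-1}(x))$.

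For the quantitative part, evaluate at $\xi$: $\omega(\xi)=1-\xi-(1-\xi)\alpha=(1-\xi)(1-\alpha)$, using $f(\xi)=\xi$ and $f'(\xi)=\alpha$. It then suffices to show that $\omega$ is Lipschitz on an interval $[0,x_0]$ with $x_0<1$ containing $\xi$ and all the $P_{k-1}$, $\tilde P_{k-1}(x)$. Lemma~\ref{lem:gw-geometric-convergence-to-xi} then gives the bound $C\gamma^k$ with $\gamma=\beta$, after absorbing $\beta^{-1}$ into $C$.

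Compute $\omega'(x)=-f'(x)+f'(x)-(1-x)f''(x)=-(1-x)f''(x)$. Since $f''(x)=\lambda^2\Phi''(1-\lambda+\lambda x)$ is finite for $x<1$ (and bounded up to $1$ under a finite second moment), $\omega$ is Lipschitz on $[0,x_0]$. For the containment, all arguments lie in $[0,\max(\xi,f^{\circ (k-1)}(1-a_0))]$, because $P_{k-1}\le \xi$ and $\tilde P_{k-1}(x)\le f^{\circ(k-1)}(1-a_0)$. Moreover $f^{\circ j}(1-a_0)\le \max(\xi,1-a_0)<1$, since $f(y)<y$ on $(\xi,1)$. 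So $x_0=\max(\xi,1-a_0)$ works uniformly in $x$.

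The main obstacle is the rigorous a.s. identification of $A_k$ and $\tilde A_k$ with the witness events. This needs care about open components in distinct child-subtrees. It also needs the fact that non-root-component noise produces no label equal to $x$, which follows from Lemma~\ref{lem:two-witness} together with atomlessness. The remaining computation is routine.
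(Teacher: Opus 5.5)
Your proposal is correct and follows the paper's proof essentially step for step: you identify $A_k$ (resp.\ $\tilde A_k$) via Lemma~\ref{lem:two-witness} with the event that at least two open children survive to depth $k$, average over $R$ using $f$ and $f'$ to get $\omega(P_{k-1})$ (resp.\ $\omega(\tilde P_{k-1}(x))$), and conclude from the Lipschitz bound $\omega'(t)=-(1-t)f''(t)$ on $[0,\max\{\xi,1-a_0\}]$ together with Lemma~\ref{lem:gw-geometric-convergence-to-xi}. Your extra argument that the iterates $f^{\circ j}(1-a_0)$ stay below $\max\{\xi,1-a_0\}$, because $f(y)<y$ on $(\xi,1)$, fills in the interval containment that the paper only asserts.
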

\begin{proof}
Fix the root label \(x\). Let $R$ be the number of open children of the root. Then $R$ has probability-generating function \(\ee[t^R]=f(t).\)

For each open child $u$, let \[B_u:=\mathbb 1\{Z_{u,k-1}\ge 1\} \text{ and }\tilde B_u:=\mathbb 1\{\tilde Z_{u,k-1}\ge 1\}.\] Conditioned on $R=r$, the relevant child-subtrees are independent, and \[B_u \stackrel{\mathrm{iid}}{\sim} \mathrm{Bernoulli}(1-P_{k-1}) \text{ and }\tilde B_u \stackrel{\mathrm{iid}}{\sim} \mathrm{Bernoulli}(1-\tilde P_{k-1}(x)).\] By Lemma~\ref{lem:two-witness}, \(A_k\) occurs exactly when at least two of the \(B_u\) equal one, and \(\tilde A_k\) occurs exactly when at least two of the \(\tilde B_u\) equal one. Therefore, \[\Pr(A_k) = 1-f(P_{k-1}) - (1-P_{k-1})f'(P_{k-1}) = \omega(P_{k-1}).\] The same calculation conditional on \(\tau_\rho^\star=x\) gives \(\Pr(\tilde A_k | \tau_\rho^\star = x)=\omega(\tilde P_{k-1}(x))\).

Since \(f(\xi) = \xi\) and \(f'(\xi)=\alpha\), \(\omega(\xi) = (1-\xi)(1-\alpha).\) The convergence follows from \(P_k, \tilde P_k(x)\to \xi\) by Lemma~\ref{lem:gw-geometric-convergence-to-xi}. For the bound, note that all relevant arguments of \(\omega\) lie in a compact interval \([0,M]\subset [0,1)\) with \(M = \max\{1-a_0, \xi\}\). The PGF is smooth on this compact interval and \(\omega'(t) = -(1-t)f''(t)\), so \(\omega\) is Lipschitz on the interval. Combining this with the bounds for \(P_k\) and \(\tilde P_k(x)\) yields our desired inequality.
\end{proof}

Two witnesses determine the root label with certainty, but the Bayes estimator can also succeed when no such pair exists at the root. For example, suppose that the label of a child \(v\) of the root can itself be identified from two witnesses below \(v\). Then \(\tau_v^\star\) is known exactly, and since the edge \(\rho\to v\) copies with probability \(\lambda\), the label \(\tau_v^\star\) forms a positive-mass atom in the posterior distribution of \(\tau_\rho^\star\). Since we know by Lemma~\ref{lem:lim-of-two-witness} that the probability of two witnesses converges to \((1-\xi)(1-\alpha)\) and that the probability of the true label surviving is \(1-\xi\), which is a necessary condition for reconstruction, we know \(0 < (1-\xi)(1-\alpha) \leq E_\infty^\star \leq 1-\xi\). 

To study Bayes' estimator, we wish to recursively study the posterior measure for the label of each vertex. For any vertex \(u\in T\) at depth at most \(k\), consider the posterior measure \[\pi_{u,k}^\star(B) = \Pr(\tau_u^\star\in B | T, \tau_{L_{k-h(u)}(u)}^\star)\] where \(B\) is Borel and \(h(u) = d(\rho, u)\) is the height of \(u\). We denote the Dirac measure at \(a\) by \(\delta_a\).

\begin{lem}\label{lem:posterior-measure-recursion}
Fix a finite subtree rooted at \(u\), and call its leaves the terminal vertices. Suppose the true labels of the terminal vertices are observed, and let \(S_u\) denote the set of labels appearing among them. Then, \[\pi_{u,k}^\star = c_u\mu + \sum_{a\in S_u} p_u(a)\delta_a,\] where \(c_u + \sum_{a\in S_u} p_{u,k}(a) = 1\). Furthermore, the following hold almost surely. \begin{enumerate}
    \item Every atom of \(\pi_{v,k}^\star\) is one of the terminal labels observed below \(v\). \label{statement-1}
    \item If \(S_v\) and \(S_w\) contain the same atom \(a\) for distinct children vertices \(v, w\in L_1(u)\), then \(\pi_{u,k}^\star = \delta_a\). \label{statement-2}
    \item If the sets \((S_v)_{v\in L_1(u)}\) are pairwise disjoint, then \(c_u > 0\) and, for \(a \in S_v\), \[p_{u,k}(a) = \frac{\lambda c_up_{v,k}(a)}{1-\lambda + \lambda c_v}.\]\label{statement-3}
\end{enumerate}
\end{lem}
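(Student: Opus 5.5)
We argue by induction on the height of the finite subtree rooted at \(u\), that is, by upward recursion from the terminal vertices. We work throughout on the probability-one event from Lemma~\ref{lem:two-witness}: distinct open components carry distinct labels, and every equality of labels reflects a common open component.

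\textbf{Base case.} If \(u\) is terminal, its label is observed. Then \(\pi^\star_{u,k}=\delta_{\tau_u^\star}\), which has the claimed form with \(c_u=0\) and \(S_u=\{\tau_u^\star\}\).

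\textbf{Inductive step.} Let \(u\) have children \(v_1,\dots,v_D\). Conditional on \(\tau_u^\star=x\), the child-subtrees are independent. The likelihood of the observations below \(v\) given \(\tau_u^\star=x\) is obtained by integrating the edge kernel:
\[
\ell_v(x)=\lambda\, g_v(x)+(1-\lambda)\int g_v\,d\mu,
\]
where \(g_v\) is the likelihood given \(\tau_v^\star\). Bayes' rule with the uniform prior shows that \(g_v\) is, up to a constant, the density of \(\pi^\star_{v,k}\) relative to the reference measure \(\mu+\sum_{a}\delta_a\). By the inductive hypothesis this means \(g_v\propto c_v\) on the non-atom part and \(g_v(a)\propto p_{v,k}(a)\) at each atom \(a\in S_v\). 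Therefore
\[
\ell_v(x)\propto
\begin{cases}
\lambda c_v+(1-\lambda), & x\notin S_v,\\
\lambda p_{v,k}(a)+(1-\lambda)\text{-weighted term}, & x=a\in S_v.
\end{cases}
\]
Here the atom at \(a\) contributes a point mass that the \(\mu\)-integral does not see. The cleanest way to handle this is to regard the observation as a mixture, where the event ``\(\tau_v^\star=a\) exactly'' has positive probability only through copying. Multiplying over children gives the posterior
\[
\pi^\star_{u,k}\propto \prod_v \ell_v(x)\,\mu(dx)\quad\text{plus atoms},
\]
and hence the form \(c_u\mu+\sum_{a\in S_u}p_u(a)\delta_a\) with \(S_u=\bigcup_v S_v\). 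This proves statement~\eqref{statement-1}, since any atom of \(\pi^\star_{v,k}\) comes from a terminal label below \(v\).

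\textbf{Statement~\eqref{statement-2}.} Suppose \(a\in S_v\cap S_w\) for distinct children \(v,w\). Then \(a\) is a terminal label appearing in two distinct child-subtrees of \(u\). The argument of Lemma~\ref{lem:two-witness} applies verbatim to arbitrary terminal depths, because it only uses edge-disjointness of the two paths and atomlessness. It gives \(\tau_u^\star=a\) almost surely, so \(\pi^\star_{u,k}=\delta_a\).

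\textbf{Statement~\eqref{statement-3}.} Suppose the sets \(S_v\) are pairwise disjoint. For \(x\notin\bigcup_v S_v\), each factor in the posterior equals \(1-\lambda+\lambda c_v\), which is positive since \(\lambda<1\). This gives \(c_u\propto\prod_v(1-\lambda+\lambda c_v)>0\). For \(x=a\in S_v\), the factor for \(v\) is replaced by \(\lambda p_{v,k}(a)\), because the atom arises only if the edge \(u\to v\) copies. The factors for the other children are unchanged, since \(a\notin S_w\) for \(w\ne v\). Taking the ratio with the continuous-part density gives
\[
\frac{p_{u,k}(a)}{c_u}=\frac{\lambda p_{v,k}(a)}{1-\lambda+\lambda c_v},
\]
which is the claimed formula.

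\textbf{Main obstacle.} The delicate point is rigorously handling the mixed (atom plus Lebesgue) likelihoods. One has to fix a dominating measure, namely \(\mu\) plus counting measure on observed labels, and verify that the densities multiply correctly across independent subtrees. The plan is to compute the joint law of the terminal labels explicitly: terminals are partitioned according to open components, each component label is uniform, and repeated labels occur only within a component. The posterior is then read off as a ratio, with the almost-sure qualifications coming from discarding the null coincidence events.
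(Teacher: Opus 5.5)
Your architecture is the same as the paper's. You use upward induction from the terminals, a per-child message \((1-\lambda+\lambda c_v)\mu+\lambda\sum_{a\in S_v}p_{v,k}(a)\delta_a\), a product over the conditionally independent children, and normalization. Statement~(2) comes from the two-witness argument, which indeed uses only edge-disjointness and atomlessness. The gap is in the inductive step, which is the real content of the lemma.

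First, Bayes' rule does not make \(g_v\) proportional to the density of \(\pi^\star_{v,k}\) with respect to \(\mu+\sum_a\delta_a\). A density-form Bayes update of the atomless prior \(\mu\) cannot produce atoms. The atoms exist only because the family \(\{\mathrm{Law}(\text{terminal labels}\mid\tau_v^\star=y)\}_y\) is not dominated; for a terminal \(v\) it is \(\{\delta_y\}_y\). So there is no likelihood \(g_v\) in the usual sense. Read literally, your identification even gives the wrong factor. The atoms are \(\mu\)-null, so \(\int g_v\,d\mu\propto c_v\) and \(\ell_v\propto\lambda c_v+(1-\lambda)c_v=c_v\) off \(S_v\). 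For a terminal child (\(c_v=0\)) this produces the message \(\delta_a\) instead of \((1-\lambda)\mu+\lambda\delta_a\). Your displayed value \(\lambda c_v+1-\lambda\) is right only if the atoms are counted in the normalization, contrary to your own remark that the \(\mu\)-integral does not see them.

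Second, and more importantly, ``multiplying over children \dots\ plus atoms'' tacitly multiplies an atom of child \(v\) by the other children's densities evaluated at the single \(\mu\)-null point \(a\). In a non-dominated model with a.e.-defined densities, this is exactly the step that needs proof. You flag it as the main obstacle but leave it as a plan.

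Both points can be repaired. For one child, work with measures, as the paper does. The joint law of \((\tau_u^\star,\tau_v^\star)\) is symmetric, so \(\mathrm{Law}(\tau_u^\star\mid\tau_v^\star=a)=(1-\lambda)\mu+\lambda\delta_a\). The Markov property then gives \(\mathrm{Law}(\tau_u^\star\mid\mathcal F_v)=(1-\lambda)\mu+\lambda\pi^\star_{v,k}\).

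For the combination, carry out your plan. Condition on the open/closed edge configuration \(\eta\), which is independent of the fresh labels. Three facts hold:
\begin{enumerate}
\item Almost surely, the observed equality pattern equals the partition \(\mathcal P(\eta)\) of the terminals into open components.
\item The conditional law of the terminal labels given \(\eta\) depends only on \(\mathcal P(\eta)\): block values are i.i.d.\ \(\mu\), including the block of \(u\), whose value is \(\tau_u^\star\).
\item These laws are mutually singular for different partitions.
\end{enumerate}
Hence \(\pi^\star_{u,k}\) is the normalization of \(W_\emptyset\mu+\sum_\beta W_\beta\delta_{a_\beta}\). Here \(a_\beta\) is the common label of block \(\beta\). \(W_\emptyset\) (resp.\ \(W_\beta\)) is the probability that \(\mathcal P(\eta)\) equals the observed partition and the open component of \(u\) contains no terminal (resp.\ contains exactly the terminals of \(\beta\)).

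Let \(\mathcal P_v\) be the observed partition of the terminals below \(v\), and let \(\Pr(\mathcal P_v)\) be the probability that the configuration below \(v\) induces it. When no block spans two children, independence of the edge configurations in distinct child-subtrees gives
\(W_\emptyset=\prod_v\bigl[(1-\lambda)\Pr(\mathcal P_v)+\lambda\Pr(\mathcal P_v,\,v\text{'s component misses the terminals})\bigr]\).
For \(\beta\) below \(v\), it gives
\(W_\beta=\lambda\Pr(\mathcal P_v,\,v\text{'s component meets }\beta)\prod_{w\ne v}\bigl[\cdots\bigr]\).
Dividing by \(\prod_v\Pr(\mathcal P_v)\) yields exactly the factors \(1-\lambda+\lambda c_v\) and \(\lambda p_{v,k}(\beta)\), and hence statements~(1) and~(3). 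This is a rigorous form of the paper's ``weight of configurations'' paragraph.
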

\begin{proof}
Since we condition on the event that distinct open components have distinct labels, every observed label in \(S_u\) identifies a unique open component.

Firstly, statement~\eqref{statement-2} holds by Lemma~\ref{lem:two-witness}. We prove the structure of the posterior measure and statements~\eqref{statement-1} and~\eqref{statement-3} by induction upward from the terminal vertices. Clearly, for any terminal vertex \(v\), the posterior measure is \(\pi_{v,k}^\star = \delta_{\tau_v^\star}\). 

Now, assume inductively that for every \(v\in L_1(u)\), \(\pi_{v,k}^\star = c_v\mu + \sum_{a\in S_v} p_v(a)\delta_a\). We also assume that the sets \((S_v)_{v\in L_1(u)}\) are pairwise disjoint, since otherwise, by statement~\eqref{statement-2}, we have that \(\pi_{u,k}^\star = \delta_a\) for some \(a\in S_u\).

We know that the joint law of \((\tau_u^\star, \tau_v^\star)\) for \(v\) child of \(u\) is \[\lambda\mu(dx)\delta_x(dy) + (1-\lambda)\mu(dx)\mu(dy).\] Since the joint law is symmetric in \(x\) and \(y\), then \[\mathrm{Law}(\tau_u^\star|\tau_v^\star = a) = (1-\lambda)\mu + \lambda \delta_a.\] Then, for every Borel set \(B\), \[\Pr(\tau_u^\star\in B | \mathcal F_v) = \ee[\Pr(\tau_u^\star\in B|\tau_v^\star) | \mathcal F_v] = (1-\lambda)\mu(B) + \lambda\pi_{v,k}^\star(B),\] where \(\mathcal F_v = \sigma(\tau_w^\star | w\in L_{k-h(v)}(v))\). Hence, \[\mathrm{Law}(\tau_u^\star | \mathcal F_v) = (1-\lambda + \lambda c_v)\mu + \sum_{a \in S_v} \lambda p_v(a) \delta_a.\] 

We now combine the child observations. For a fixed child \(v\), up to a common normalizing factor depending only on the observations below \(v\), the coefficient \(c_v\) is the weight of configurations in which the open component of \(v\) does not meet an observed terminal component, while \(p_v(a)\) is the weight of configurations in which it meets the observed component labeled \(a\). Across the edge \(u\to v\), a closed edge contributes weight \(1-\lambda\), while an open edge contributes \(\lambda c_v\) in the former case and \(\lambda p_v(a)\) in the latter. Hence, child \(v\) contributes background weight \(1-\lambda + \lambda c_v\) and atomic weight \(\lambda p_v(a)\) at \(a\in S_v\). The common normalizing factor is independent of the value of \(\tau_u^\star\), meaning it cancels when the posterior at \(u\) is normalized.

Since the child-subtrees are conditionally independent given \(\tau_u^\star\), these weights multiply across children. Moreover, since the atom sets are pairwise disjoint, it must be the case that the open component of \(u\) either meets no observed terminal component or it meets the observed component labeled \(a\), for a unique \(a \in S_v\) and a unique child \(v\). Thus, the unnormalized posterior measure at \(u\) is \[\pi_{u,k}' = \left(\prod_{v\in L_1(u)} (1-\lambda + \lambda c_v)\right)\mu + \sum_{v\in L_1(u)}\sum_{a\in S_v} \lambda p_v(a)\left(\prod_{\substack{w\in L_1(u)\\ w\neq v}} (1-\lambda + \lambda c_w)\right)\delta_a.\] Indeed, in the first case, every child branch is in its nonatomic state, producing the first product. In the second case, child \(v\) identifies the component with \(a\), while every other child must remain in its nonatomic state. Letting \(Z_u\) be the normalizing constant, we conclude that \(\pi_{u,k}^\star = c_u\mu + \sum_{a\in S_u} p_u(a)\delta_a\) for \(c_u = \frac{\prod_v (1-\lambda +\lambda c_v)}{Z_u}\) and \(p_u(a) = \frac{\lambda p_v(a) \prod_{w\neq v}(1-\lambda + \lambda c_w)}{Z_u} = \frac{\lambda c_u p_v(a)}{1-\lambda + \lambda c_v}\). Since each \(1-\lambda +\lambda c_v > 0\) in the first product, we know that \(c_u > 0\).
\end{proof} 

With Lemma~\ref{lem:two-witness} and the posterior measures in hand, we define a pruning operator that collapses a depth–$k$ tree at the first internal vertex exhibiting a ``two-witness" signal up to the \(m\)th generation vertices. 

\begin{defin}\label{def:T_*}
Let $T\subset\mathbb N^*$ be a rooted tree with root $\rho$, label \(\sigma\), and depth \(k\). Let \(m \leq k\). Define the pruned subtree \[T_{k, m} = T_{k, m}(T,\sigma_{L_k(\rho)})\] recursively, processing vertices in increasing depth $i=0,1,\dots,m-1$ as follows. First, remove all leaf vertices without a label, i.e., all leaf vertices with depth less than \(k\). Do this repeatedly until all leaf vertices have a label. Consider each \(u\in L_i(\rho)\). \begin{enumerate}
    \item If \(u\) has two witnesses with label \(a\), then remove the subtree rooted below \(u\) and record the label of \(u\) as a. We call \(u\) a witness stop.
    \item If \(u\) does not have two witnesses, continue to its children. If it does not have any children, leave it unlabeled.
    \item At generation \(m\), stop without recording a label, and remove the subtree below it.
\end{enumerate}
The resulting tree, $T_{k, m}$, is a subtree of $T$ of depth at most \(m\). We call a vertex active if it is reached by the pruning procedure and is neither a witness stop nor a generation-\(m\) truncation vertex. An active vertex with at least one retained child is called an active internal vertex.
\end{defin}

We now attach a synthetic measure \(\pi_{u,k,m}^\star\) on each vertex \(u\). We will later show that using Bayes' estimator on the synthetic posterior measure performs almost as well as Bayes' estimator on the true posterior measure. \begin{enumerate}
    \item At a witness stop \(u\) with label \(a\), let \(\pi_{u,k,m}^\star = \delta_a\).
    \item At an unresolved generation-\(m\) vertex or on a branch with no level-\(k\) descendant, set \(\pi_{u,k,m}^\star = \mu\).
    \item At an active internal vertex, combine the child synthetic measures by the recursion of Lemma~\ref{lem:posterior-measure-recursion}.
\end{enumerate}

Let \(\psi_{k, m}^\star\) output an atom of \(\pi_{\rho,k,m}^\star\) with maximal mass. Here, we may use an arbitrary tie-breaking strategy. If \(\pi_{\rho,k,m}^\star\) is atomless, output an arbitrary label. Define \[S_{k,m} = \Pr(\psi_{k,m}^\star = \tau_\rho^\star).\] 

For the noisy case, we define the true posterior measure using the noisy labels to be \(\tilde\pi_{u,k}^\star\). Similarly define \(\tilde T_{k,m} = T_{k,m}(T, \tilde\tau_{L_k(\rho)}^\star)\), \(\tilde\pi_{u,k,m}^\star\), \(\tilde\psi_{k,m}^\star\), and \(\tilde S_{k,m}\). We note that \(\pi_{u,k}^\star\) and \(\tilde\pi_{u,k}^\star\) are the true posterior of \(u\) using the noiseless and noisy labels, respectively, and \(\pi_{u,k,m}^\star\) and \(\tilde\pi_{u,k,m}^\star\) are the auxiliary synthetic measures with noiseless and noisy labels, respectively. 

The next three lemmas bound the pruning error and compare the noisy and noiseless synthetic measures.

Let \(U_{k,m}\) be the event that the open component of the root reaches level \(k\), but for each generation \(0,\dots,m-1\), the unique vertex of the root open component lying on the surviving branch has exactly one open child whose open component reaches level \(k\). Equivalently, the root label appears at level \(k\), but no witness stop exposing that label occurs in generations \(0,\dots, m-1\).

\begin{lem}\label{lem:E_k-S_km}
For any \(m\in [k]\), \[\ee\left\|\pi_{\rho,k}^\star - \pi_{\rho,k,m}^\star\right\| \leq \Pr(U_{k,m}) =: u_{k,m}.\] In particular, \[0\leq E_k^\star-S_{k,m} \leq u_{k,m}.\]
\end{lem}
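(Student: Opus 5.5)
Both \(\pi_{\rho,k}^\star\) and \(\pi_{\rho,k,m}^\star\) are computed by the same bottom-up recursion of Lemma~\ref{lem:posterior-measure-recursion}. The true posterior uses all depth-\(k\) information. The synthetic measure replaces the subtree below each witness stop by the Dirac mass it already equals, and replaces each unresolved generation-\(m\) vertex by \(\mu\). The plan is to show that the two measures coincide exactly off an event contained in \(U_{k,m}\), up to errors that do not affect the conclusion. Then bound the total variation by \(1\) on that event.

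\textbf{Step 1: witness stops are exact.} At a witness stop \(u\), Lemma~\ref{lem:two-witness} (or Lemma~\ref{lem:posterior-measure-recursion}\eqref{statement-2}) gives \(\pi_{u,k}^\star=\delta_a=\pi_{u,k,m}^\star\). Branches with no level-\(k\) descendant carry no observations, so their true posterior is \(\mu\) and they contribute the neutral factor \(1-\lambda+\lambda\cdot 1=1\). This matches the synthetic assignment. Hence the only discrepancy comes from generation-\(m\) truncation vertices \(w\), where the true \(\pi_{w,k}^\star=c_w\mu+\sum_a p_w(a)\delta_a\) is replaced by \(\mu\).

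\textbf{Step 2: reduce to a mixture statement.} Rather than tracking how the recursion propagates errors, I would view \(\pi_{\rho,k,m}^\star\) as a posterior given coarser information. Let \(\mathcal G\) be the \(\sigma\)-field generated by the pruned tree \(T_{k,m}\) together with the recorded witness labels. I expect \(\pi_{\rho,k,m}^\star\) to be at least a version of a conditional law close to \(\Pr(\tau_\rho^\star\in\cdot\mid\mathcal G)\), and more robustly the following comparison should hold.

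On the complement of \(U_{k,m}\), suppose the root's open component does not reach level \(k\). Then \(\tau_\rho^\star\) is a fresh uniform relative to every observed label's component, and the root marginal puts no mass on observed atoms beyond what is ``explained''. The cleaner route is to note that both measures are of the form (mass on atoms) plus (multiple of \(\mu\)). For either measure, the mass on the correct atom \(\tau_\rho^\star\) is determined by the witness-stop structure whenever the root component reaches a witness stop.

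So I would bound \(\ee\|\pi-\pi'\|\) using the coupling interpretation. Both measures are laws of \(\tau_\rho^\star\) given nested information \(\mathcal G\subset\mathcal F\). Then \(\ee\|\Pr(\cdot\mid\mathcal F)-\Pr(\cdot\mid\mathcal G)\|\) can be controlled by the probability that the extra information in \(\mathcal F\), namely the subtrees below the truncation vertices, reveals the root label in a way \(\mathcal G\) does not. That extra information is relevant only if the root's open component passes through a generation-\(m\) vertex and reaches level \(k\) without a witness stop above. This is exactly \(U_{k,m}\).

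\textbf{Step 3: the accuracy bound.} Since \(E_k^\star=\ee\max_a\pi_{\rho,k}^\star(\{a\})\) is the Bayes optimum, it dominates \(S_{k,m}\), which gives \(E_k^\star-S_{k,m}\ge0\). Moreover, \(|\ee\max\pi-\ee\max\pi'|\le\ee\|\pi-\pi'\|\). Also, \(\psi_{k,m}^\star\) succeeds with probability \(\ee\,\pi_{\rho,k}^\star(\{\psi\})\ge\ee\,\pi_{\rho,k,m}^\star(\{\psi\})-\ee\|\cdot\|\). These give the upper bound \(u_{k,m}\).

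\textbf{Main obstacle.} The hard step is Step 2: rigorously identifying \(\pi_{\rho,k,m}^\star\) as a genuine conditional law, or otherwise showing that the two measures agree exactly off \(U_{k,m}\). Off \(U_{k,m}\), truncated subtrees may still alter atom weights of non-root components, so the measures need not agree pointwise. I would first try an explicit computation via the recursion. Under the generative model, the event that a generation-\(m\) subtree's atoms are relevant at the root requires an open path from \(\rho\) to that vertex. I would then bound \(\|\pi-\pi'\|\) by the conditional probability of such a path continuing to level \(k\) without a witness stop, and take expectations to get \(\Pr(U_{k,m})\).
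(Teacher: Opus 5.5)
\textbf{The gap is Step 2, and Step 3 also loses a factor of 2.}

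\paragraph{Step 2.} All the content of the lemma sits in Step 2, and what you give there is a heuristic rather than an argument. You identify \(\pi_{\rho,k,m}^\star\) with (a version of) \(\Pr(\tau_\rho^\star\in\cdot\mid\mathcal G)\), but this is false in general. The pruning decisions themselves carry information that the synthetic recursion ignores. For instance, suppose the root has two children: \(v_1\), a witness stop with label \(a\), and \(v_2\), unresolved down to the truncation level. The synthetic root measure is then exactly \((1-\lambda)\mu+\lambda\delta_a\). But the fact that the root is \emph{not} a witness stop says that \(a\) does not appear at level \(k\) below \(v_2\). That is less likely when \(\tau_\rho^\star=a\), so the \(\mathcal G\)-posterior puts strictly less than \(\lambda\) on \(a\).

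Even for genuinely nested \(\sigma\)-fields, it is not a general fact that \(\ee\|\Pr(\cdot\mid\mathcal F)-\Pr(\cdot\mid\mathcal G)\|\) is bounded by the probability that the extra information ``reveals'' the root label. Extra information typically reweights all atoms slightly on a high-probability event. As you note yourself, the two measures differ pointwise off \(U_{k,m}\): both are \(\cF_k\)-measurable and cannot see whether the root component reaches level \(k\). So you need a mechanism that turns a pointwise-nonzero discrepancy into the probability of an event, and the proposal supplies none.

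\paragraph{The missing idea.} What is needed is a one-sided comparison through the recursion of Lemma~\ref{lem:posterior-measure-recursion}. Inductively up the pruned tree, show that \(\bar c_u\ge c_u\) and \(\bar p_u(a)\ge p_u(a)\) for every retained (``old'') atom \(a\).
\begin{itemize}
\item The total atomic odds a child contributes, \(\lambda(1-c_v)/(1-\lambda+\lambda c_v)\), is decreasing in \(c_v\). Since \(\bar c_v\ge c_v\) inductively, with \(\bar c_v=1\) at truncation vertices, the total odds at the parent can only go down.
\item The odds \(p/c\) of each old atom can only go up.
\end{itemize}
Hence the signed measure \(\pi_{\rho,k,m}^\star-\pi_{\rho,k}^\star\) is negative exactly on the discarded (``new'') atoms \(N_\rho\). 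Since \(N_\rho\) is \(\cF_k\)-measurable, this gives
\[\|\pi_{\rho,k}^\star-\pi_{\rho,k,m}^\star\|=\sum_{b\in N_\rho}p_\rho(b)=\Pr(\tau_\rho^\star\in N_\rho\mid\cF_k).\]
Taking expectations gives \(\Pr(\tau_\rho^\star\in N_\rho)\le\Pr(U_{k,m})\). This holds because if \(\tau_\rho^\star\) is a new atom, the root's open component reaches level \(k\) without producing a witness stop in generations \(0,\dots,m-1\).

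\paragraph{Step 3.} The same one-sided structure is needed to get the constant in the second claim. Your Step 3 chains two \(1\)-Lipschitz estimates, \(|\ee\max\pi-\ee\max\pi'|\le\ee\|\pi-\pi'\|\) and \(\ee\pi(\{\psi\})\ge\ee\pi'(\{\psi\})-\ee\|\pi-\pi'\|\). That only yields \(E_k^\star-S_{k,m}\le 2\,\ee\|\pi_{\rho,k}^\star-\pi_{\rho,k,m}^\star\|\), and the factor \(2\) is genuinely attainable for generic pairs of measures. To get \(u_{k,m}\), split on whether the full-posterior maximizer \(a^\star\) is new or old:
\begin{itemize}
\item If \(a^\star\) is new, then \(p_\rho(a^\star)\le\sum_{N_\rho}p_\rho\).
\item If \(a^\star\) is old, then \(p_\rho(a^\star)\le\bar p_\rho(a^\star)\le\bar p_\rho(\bar a)\), and \(\bar p_\rho(\bar a)-p_\rho(\bar a)\le\sum_{N_\rho}p_\rho\).
\end{itemize}
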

\begin{proof}
We show that pruning can only increase the mass of each retained atom and of the nonatomic part, so the total variation error is exactly the original posterior mass of the discarded atoms. Let \(\mathcal F_k = \sigma(T, \tau_{L_k(\rho)}^\star)\). At every vertex \(u\) in \(T_{k,m}\), call an atom in the full posterior \(\pi_{u,k}^\star\) old if it is also an atom in \(\pi_{u,k,m}^\star\). Otherwise, call it new. Let \(O_u\subset S_u\) be the set of old atoms, where \(S_u\) is the set of all atoms in the full posterior. Then, \(O_u\) is the set of all atoms in \(\pi_{u,k,m}^\star\). Let \(N_u = S_u\setminus O_u\).

Throughout the proof, let \(\pi_u := \pi_{u, k}^\star\) and \(\bar\pi_u := \pi_{u,k,m}^\star\). If \(u\) is an active vertex of \(T_{k,m}\) that is not a witness stop, then the atom sets coming from distinct children are disjoint. Then \(c_u, \bar c_u > 0\), where \(\pi_{u} = c_u\mu + \sum_{a\in S_u}p_u(a)\delta_a\) and \(\bar\pi_{u} = \bar c_u\mu + \sum_{a\in O_u} \bar p_u(a) \delta_a\). For such a vertex, define \[r_u(a) = \frac{p_u(a)}{c_u} \text{ and } \bar r_u(b) = \frac{\bar p_u(b)}{\bar c_u},\] where \(a\in S_u\) and \(b\in O_u\). We first inductively show the following: \begin{itemize}
    \item \(\bar p_u(a) \geq p_u(a)\) and \(\bar c_u \geq c_u\) for all \(u\in T_{k,m}\) and \(a\in O_u\).
    \item \(\bar r_u(a) \geq r_u(a)\) for all active vertex \(u\) that is not a witness stop and \(a\in O_u\).
    \item \(\sum_{a\in O_u} (\bar p_u(a) - p_u(a)) \leq \sum_{b\in N_u} p_u(b)\).
\end{itemize}

Consider the base case: any leaf \(u\) of \(T_{k,m}\). We split into two cases: either \(u\) is a witness stop or not. In the former case, \(O_u = \{a\}\), implying \(\bar p_u(a) = 1 = p_u(a)\), \(\bar c_u = 0 = c_u\), and \(\bar r_u(a) \geq r_u(a)\) vacuously holds. Also, \(\sum_{a\in O_u} (\bar p_u(a) - p_u(a)) = 0 = \sum_{b\in N_u} p_u(b)\). In the latter case, \(O_u = \emptyset\), meaning \(\bar p_u(a) \geq p_u(a)\) vacuously holds, \(\bar c_u = 1 \geq c_u\), and \(\bar r_u(a) \geq r_u(a)\) vacuously holds. Moreover, \(\sum_{a\in O_u}(\bar p_u(a) - p_u(a)) = 0 \leq \sum_{b\in N_u} p_u(b)\).

Assume the inductive hypothesis, i.e., for any active internal vertex \(u\in T_{k,m}\) that is not a witness stop, the four desired inequalities hold for every \(v\in L_1(u) \cap T_{k,m}\). Since no atom occurs in two distinct child-subtrees, the sets \(O_u\) and \(N_u\) are the disjoint unions of the corresponding old and new atom sets contributed by the children. We first show that \[\underbrace{\sum_{a\in O_u} \bar r_u(a)}_A \leq \underbrace{\sum_{a\in O_u} r_u(a)}_B + \underbrace{\sum_{b\in N_u} r_u(b)}_C.\] Observe that, by Lemma~\ref{lem:posterior-measure-recursion}, \begin{align*}
    A &= \sum_{v\in L_1(u)\cap T_{k,m}}\sum_{a\in O_v} \frac{\lambda \bar p_v(a)}{1-\lambda + \lambda \bar c_v}\\
    &= \sum_{v\in L_1(u)\cap T_{k,m}} \frac{\lambda (1-\bar c_v)}{1-\lambda + \lambda \bar c_v}\\
    [c_v \leq \bar c_v]&\leq \sum_{v\in L_1(u)\cap T_{k,m}} \frac{\lambda (1-c_v)}{1-\lambda + \lambda c_v}\\
    &= \sum_{v\in L_1(u)\cap T_{k,m}} \left[\sum_{a\in O_v}\frac{\lambda p_v(a)}{1-\lambda + \lambda c_v} + \sum_{b\in N_v}\frac{\lambda p_v(b)}{1-\lambda + \lambda c_v}\right]\\
    &= B + C
\end{align*} as desired. We now observe that \(\sum_{a\in O_u} \bar p_u(a) = \frac{A}{1 + A}, \sum_{a\in O_u} p_u(a) = \frac{B}{1 + B+C}, \sum_{a\in N_u} p_u(a) = \frac{C}{1 + B+C}, \bar c_u = \frac{1}{1 + A},\) and \(c_u = \frac{1}{1 + B + C}.\) First, \(\bar c_u = \frac{1}{ 1 + A} \geq \frac{1}{1 + B+C} = c_u\) because \(A \leq B+C\). Also, \(\frac{A}{1 +A} \leq \frac{B+C}{1+B+C}\) because \(t\mapsto \frac{t}{1+t}\) is increasing. Hence, \[\sum_{a\in O_u} (\bar p_u(a) - p_u(a))  = \frac{A}{1+A} - \frac{B}{1+B+C} \leq \frac{B+C}{1+B+C}-\frac{B}{1+B+C} =\sum_{b\in N_u} p_u(b).\] Next, fix \(a\in O_u\), and let \(v\) be the unique child of \(u\) whose subtree contains \(a\). If \(v\) is a witness stop, then the full and synthetic posteriors at \(v\) are both \(\delta_a\), and therefore, \[\bar r_u(a) = \frac{\lambda}{1-\lambda} = r_u(a).\] Otherwise, because \(a\) is old, \(v\) is an active vertex rather than an unresolved truncated vertex. Hence, \(c_v, \bar c_v > 0\), and the inductive hypotheses give \begin{align*}
    \bar r_u(a) &= \frac{\lambda \bar p_v(a)}{1-\lambda + \lambda \bar c_v}\\
    &= \frac{\lambda \bar r_v(a)}{\frac{1-\lambda}{\bar c_v} + \lambda}\\
    [\bar r_v(a) \geq r_v(a)]&\geq \frac{\lambda r_v(a)}{\frac{1-\lambda}{\bar c_v} + \lambda}\\
    [\bar c_v \geq c_v]&\geq \frac{\lambda r_v(a)}{\frac{1-\lambda}{c_v} + \lambda}\\
    &= r_u(a).
\end{align*} Finally, \[\bar p_u(a) = \frac{\bar r_u(a)}{1+A} \geq \frac{r_u(a)}{1 + A} \geq \frac{r_u(a)}{1+B+C} = p_u(a),\] where the first inequality follows from \(\bar r_u(a) \geq r_u\) and the second inequality follows from \(A \leq B+C\). This completes the induction.

Conditional on \(\cF_k\), we know that \[\bar \pi_{\rho} - \pi_\rho = \underbrace{(\bar c_\rho-c_\rho)\mu + \sum_{a\in O_\rho} (\bar p_\rho(a) - p_\rho(a))\delta_a}_{\text{I}} - \underbrace{\sum_{b\in N_\rho}p_\rho(b)\delta_b}_{\text{II}}.\] From our induction, we know that the signed measure \(\bar\pi_\rho - \pi_\rho\) has its positive part supported on the nonatomic component and the old atoms, whereas its negative part is supported exactly on the new atoms. Since the two probability measures have the same total mass, the positive and negative parts have equal total mass. Thus, \[\left\|\pi_\rho-\bar\pi_\rho\right\| = \sum_{b\in N_\rho} p_\rho(b).\] Since \(N_\rho\) is \(\cF_k\)-measurable, \(\sum_{b\in N_\rho}p_\rho(b) = \Pr(\tau_\rho^\star\in N_\rho | \cF_k)\). Taking the expectation on both sides, we obtain \[\ee\left\|\pi_\rho - \bar\pi_\rho\right\| = \Pr(\{\tau_\rho^\star\text{ is a new atom}\}).\] We now note that \(\{\tau_\rho^\star\text{ is a new atom}\} \subset U_{k,m}\). Indeed, if the root label is a new atom, its open component reaches level \(k\), but no vertex on that component in generations \(0,\dots, m-1\) has two children continuing to level \(k\). Otherwise, the root label would have been recorded at a witness stop, making it an old atom. Thus, \[\ee\left\|\pi_\rho-\bar\pi_\rho\right\| \leq \Pr(U_{k,m}).\]

Conditional on \(\mathcal F_k\), let \(a^\star\) and \(\bar a\) be the output of Bayes' estimator and \(\psi_{k,m}^\star\), respectively. We use the convention \(p_\rho(\bar a) = 0\) if the synthetic posterior has no atoms. If the full posterior has no atoms, the desired conditional inequality is immediate, so suppose it has at least one atom. If \(a^\star\in N_\rho\), then \[p_\rho(a^\star) - p_\rho(\bar a) \leq \sum_{b\in N_\rho} p_\rho(b)\] because the sum contains \(p_\rho(a^\star)\). If \(a^\star\in O_\rho\), then \[p_\rho(a^\star) \leq \bar p_\rho(a^\star) \leq \bar p_\rho(\bar a),\] meaning \[p_\rho(a^\star) - p_\rho(\bar a)\leq \bar p_\rho(\bar a) - p_\rho(\bar a) \leq  \sum_{a\in O_\rho} (\bar p_\rho(a) - p_\rho(a)) \leq \sum_{b\in N_\rho} p_\rho(b).\] Hence, conditionally on \(\mathcal F_k\), \[\max_a p_\rho(a) - \Pr(\psi_{k,m}^\star = \tau_\rho^\star| \mathcal F_k) = p_\rho(a^\star) - p_\rho(\bar a) \leq \sum_{b\in N_\rho} p_\rho(b).\] Taking the expectation on both sides, \[0\leq E_k^\star - S_{k,m} \leq \Pr(\tau_\rho^\star\text{ is a new atom}) \leq \Pr(U_{k,m})\] as desired.
\end{proof}

We now compute the limit of \(u_{k,m} = \Pr(U_{k,m})\) for fixed \(m\).

\begin{lem}\label{lem:u_km}
For any fixed \(m \in \zz_{\geq0}\), \[u_{k,m} = (1-P_{k-m})\prod_{j=k-m}^{k-1}f'(P_j) \leq \alpha^m.\]
\end{lem}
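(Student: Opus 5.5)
We compute $u_{k,m}$ exactly by conditioning generation by generation along the open component of the root, using the branching structure of $(Z_{\rho,k})$ with offspring PGF $f$. For a vertex $u$ at depth $i$ in the root's open component, each of its open children independently has an open component reaching level $k$ (i.e.\ relative depth $k-i-1$) with probability $1-P_{k-i-1}$. Let $R$ be the number of open children of $u$, with PGF $f$. The probability that exactly one open child survives to level $k$ is
\[
\ee\bigl[R(1-P_{k-i-1})P_{k-i-1}^{R-1}\bigr]=(1-P_{k-i-1})f'(P_{k-i-1}).
\]
Conditionally on this event, the surviving child is a uniformly chosen open child conditioned to survive. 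By independence of disjoint subtrees, its own subtree is distributed as a Galton--Watson subtree conditioned on its open component reaching relative depth $k-i-1$.

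We set $g_j:=\Pr(\text{exactly one open child survives } j-1 \text{ more levels}\mid Z_{\cdot,j}\ge1)$ for a vertex whose component must reach relative depth $j$. This equals $(1-P_{j-1})f'(P_{j-1})/(1-P_j)$. Chaining these over generations $0,\dots,m-1$ (relative depths $j=k,k-1,\dots,k-m+1$) gives
\[
u_{k,m}=(1-P_k)\prod_{j=k-m+1}^{k}\frac{(1-P_{j-1})f'(P_{j-1})}{1-P_j}.
\]
The product telescopes to $(1-P_{k-m})\prod_{j=k-m}^{k-1}f'(P_j)$, which is the claimed formula. Equivalently, one can induct on $m$: $u_{k,0}=1-P_k$, and $u_{k,m}=(1-P_{k-1})f'(P_{k-1})\cdot u_{k-1,m-1}/(1-P_{k-1})$. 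The case $m=0$ is just $\Pr(Z_{\rho,k}\ge1)$.

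For the inequality, recall $P_j=f^{\circ j}(0)\uparrow\xi$, so $P_j\le\xi$. Since $f'$ is increasing, $f'(P_j)\le f'(\xi)=\alpha$. Bounding $1-P_{k-m}\le1$ then gives $u_{k,m}\le\alpha^m$.

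The main care point is justifying the conditional independence in the chaining step. Given that $u$ has exactly one surviving open child $v$, the event concerning $v$'s subtree at the next generation must be computed from the law of $v$'s subtree conditioned only on survival. This holds because the non-surviving siblings' subtrees are independent of $v$'s subtree. If $k<m$ the definition degenerates, but the statement takes $m\le k$, consistent with Lemma~\ref{lem:E_k-S_km}.
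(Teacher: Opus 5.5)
Your proof is correct and is essentially the paper's argument. The paper inducts directly via $u_{k,m}=\ee\bigl[R\,u_{k-1,m-1}P_{k-1}^{R-1}\bigr]=f'(P_{k-1})\,u_{k-1,m-1}$, because exactly one open child realizes $U_{k-1,m-1}$ while all the others die out; your chained conditional factors telescope to this same recursion, and you also obtain the bound $u_{k,m}\le\alpha^m$ the same way, from $P_j\le\xi$ and the monotonicity of $f'$.
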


\begin{proof}
We induct on \(m\). For \(m = 0\), \(u_{k,0} = 1 - P_k \leq \alpha^0\). For \(m \geq 1\), exactly one open child must realize \(U_{k-1, m-1}\), while every other open child must fail to reach generation \(k-1\). If \(R\) is the number of open children, whose PGF we know is \(f\), then \[u_{k,m} = \ee_R[Ru_{k-1,m-1} P_{k-1}^{R-1}] = f'(P_{k-1})u_{k-1,m-1} = (1-P_{k-m})\prod_{j=k-m}^{k-1}f'(P_j).\] Noting that \(P_j \leq \xi\), then \(f'(P_j) \leq f'(\xi)=\alpha\), concluding our proof.
\end{proof}

It remains to bound \(|S_{k,m} - \tilde S_{k,m}|\).
\begin{lem}\label{lem:S_km-tilde S_km}
For any \(k \geq m\), \[\ee\left\|\pi_{\rho,k,m}^\star-\tilde\pi_{\rho,k,m}^\star\right\| \leq C\sum_{r=0}^{m-1}d^r\gamma^{k-r}\] and \[\left|S_{k,m} - \tilde S_{k,m}\right| \leq C\sum_{r=0}^{m-1}d^r\gamma^{k-r}\] for some constants \(C > 0\) and \(\gamma\in(0,1)\) depending only on \(\Phi, \lambda\), and \(a_0\).
\end{lem}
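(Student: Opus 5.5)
The plan is a coupling argument. Run the noiseless and noisy pruning procedures on the same tree, and show that the two synthetic measures coincide on a high-probability event. The synthetic measure \(\pi_{\rho,k,m}^\star\) is a deterministic function of the pruned tree \(T_{k,m}\) together with the witness labels recorded at its witness stops. Call the pair (pruned tree, recorded labels) the witness data. If the noiseless and noisy procedures produce identical witness data, then \(\pi_{\rho,k,m}^\star=\tilde\pi_{\rho,k,m}^\star\), and \(\psi_{k,m}^\star=\tilde\psi_{k,m}^\star\) with a common tie-breaking rule. Since the total variation norm is at most one, both claimed bounds therefore follow from
\[\Pr\big(\text{witness data differ}\big)\le C\sum_{r=0}^{m-1}d^r\gamma^{k-r}.\]
For the second bound we use \(|S_{k,m}-\tilde S_{k,m}|\le \Pr(\psi_{k,m}^\star\ne\tilde\psi_{k,m}^\star)\).

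The witness data can differ only if, at some vertex \(u\) of depth \(r\le m-1\) that the procedure visits, the indicator of a two-witness event differs. That is, \(\mathbb 1\{J_{u,k-r}(\tau^\star)\neq\emptyset\}\neq \mathbb 1\{J_{u,k-r}(\tilde\tau^\star)\neq\emptyset\}\). Whenever both indicators are on, Lemma~\ref{lem:two-witness} says both recorded labels equal \(\tau_u^\star\) a.s., so they agree. Now take the first depth, in BFS order, at which a discrepancy occurs. Up to that depth both procedures have visited the same vertices, and the removal of label-less leaves is identical because it depends only on \(T\). So it suffices to union bound over all vertices \(u\) at depth \(r<m\) of \(T\):
\[\Pr(\text{differ})\le\sum_{r=0}^{m-1}\ee\,|L_r(\rho)|\cdot\sup_{x}\Pr\big(A^{(u)}\triangle\tilde A^{(u)}\mid\tau_u^\star=x\big).\]
Here \(A^{(u)},\tilde A^{(u)}\) are the two-witness events at \(u\) for depth \(k-r\). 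Because of the Galton--Watson structure, the subtree at \(u\) is an independent copy, and \(\ee|L_r(\rho)|=d^r\).

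The key estimate is \(\Pr(A_n\triangle\tilde A_n\mid\tau_\rho^\star=x)\le C\gamma^n\) uniformly in \(x\). Noise can only delete copies of \(x\): a.s. \(\tilde Z_{v,n-1}\le Z_{v,n-1}\) for open children \(v\), and no false matches occur, by the atomlessness argument in Lemma~\ref{lem:tildePk(x)=fk(1-a0x)}. Hence \(\tilde A_n\subseteq A_n\) a.s., and
\[\Pr(A_n\triangle\tilde A_n\mid x)=\Pr(A_n)-\Pr(\tilde A_n\mid x).\]
By Lemma~\ref{lem:lim-of-two-witness}, both terms are within \(C\gamma^n\) of \((1-\xi)(1-\alpha)\), so the difference is at most \(2C\gamma^n\) uniformly in \(x\). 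Substituting \(n=k-r\) gives the bound \(C\sum_{r<m}d^r\gamma^{k-r}\), with constants depending only on \(\Phi,\lambda,a_0\).

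The step needing the most care is the containment \(\tilde A_n\subseteq A_n\), and whether the noisy event can occur for a different reason. If noisy witnesses exist, each noisy equality implies a true equality a.s. (Lemma~\ref{lem:two-witness}), so the noiseless witnesses exist too; this gives the containment. The conditioning on \(\tau_u^\star\) is harmless, since the subtree law given \(\tau_u^\star=x\) is the same as the root law given \(x\). The first-discrepancy argument then makes the union bound legitimate without tracking how later vertices depend on earlier decisions.
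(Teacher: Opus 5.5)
Your proposal is correct and follows essentially the same route as the paper. Both reduce the two bounds to the probability that the noiseless and noisy pruning constructions make different stop/continue decisions at some vertex in the first \(m\) generations, union bound over \(u\in L_r(\rho)\) using \(\ee|L_r(\rho)|=d^r\), and bound each per-vertex discrepancy by \(\Pr(A_\ell)-\Pr(\tilde A_\ell\mid\tau_u^\star=x)\le C\gamma^\ell\), using the containment \(\tilde A_\ell\subseteq A_\ell\) from Lemma~\ref{lem:two-witness} and the uniform rates of Lemma~\ref{lem:lim-of-two-witness}. Your first-discrepancy (BFS) framing is only cosmetic, since you end up union bounding over all vertices in the first \(m\) generations exactly as the paper does.
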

\begin{proof}
Since the total variation distance between two probability measures is at most one, \[\ee\left\|\pi_{\rho,k,m}^\star-\tilde\pi_{\rho,k,m}^\star\right\| \leq \Pr\left(\pi_{\rho,k,m}^\star\neq\tilde\pi_{\rho,k,m}^\star\right).\]

If \(\pi_{\rho,k,m}^\star \neq \tilde \pi_{\rho,k,m}^\star\), then at some vertex \(u\in \bigcup_{r=0}^{m-1} L_r(\rho)\), the two constructions of \(T_{k,m}\) and \(\tilde T_{k,m}\) must make different decisions: either one stops at \(u\) while the other continues to its children, or both stop at \(u\) but record different labels.

Indeed, if the two constructions make the same decision, with the same recorded label whenever they stop, at every vertex in the first \(m\) generations, then the resulting pruned trees are identical with identical terminal messages. Since both synthetic measures are then combined upward using the same numerical recursion from Lemma~\ref{lem:posterior-measure-recursion}, we have \(\pi_{\rho,k,m}^\star = \tilde\pi_{\rho,k,m}^\star\).

Define \[M_{u, \ell} = \begin{cases}
    a &\text{if }J_{u,\ell}(\tau^\star) = \{a\},\\
    \dagger &\text{if }J_{u,\ell}(\tau^\star) = \emptyset,
\end{cases}\text{ and }\tilde M_{u, \ell} = \begin{cases}
    a &\text{if }J_{u,\ell}(\tilde\tau^\star) = \{a\},\\
    \dagger &\text{if }J_{u,\ell}(\tilde\tau^\star) = \emptyset.
\end{cases}\] Hence, \[\{\pi_{\rho, k,m}^\star\neq \tilde\pi_{\rho, k,m}^\star\} \subset \bigcup_{r=0}^{m-1} \bigcup_{u\in L_r(\rho)} \{M_{u,k-r} \neq \tilde M_{u,k-r}\} =: \cE_{k,m}.\]

We now observe that \[|S_{k,m} - \tilde S_{k,m}| = |\Pr(\psi_{k,m}^\star=\tau_\rho^\star, \tilde \psi_{k,m}^\star\neq\tau_\rho^\star) - \Pr(\psi_{k,m}^\star\neq\tau_\rho^\star, \tilde \psi_{k,m}^\star=\tau_\rho^\star)| \leq \Pr(\psi_{k,m}^\star\neq \tilde\psi_{k,m}^\star).\] 

The exact same argument as before yields \[\{\psi_{k,m}^\star\neq \tilde\psi_{k,m}^\star\} \subset \cE_{k,m}.\] Hence, it suffices to show that \[\Pr(\cE_{k,m}) \leq C\sum_{r=0}^{m-1}d^r \gamma^{k-r}.\]

Consider \[\eps_\ell =\sup_{x\in[0,1]}\Pr(M_{u,\ell}\neq \tilde M_{u,\ell} | \tau_u^\star = x).\] By Lemma~\ref{lem:two-witness}, \(\{\tilde M_{u,\ell}\neq \dagger\} \subset \{M_{u,\ell}\neq \dagger\}\) almost surely, and whenever \(\tilde M_{u,\ell} \neq \dagger\), then \(\tilde M_{u,\ell} = M_{u,\ell} =\tau_u^\star\) almost surely. Hence, conditional on \(\tau_u^\star = x\), the two marks disagree exactly when a noiseless witness exists, but a noisy witness does not. Then, \begin{align*}
    \Pr(M_{u,\ell} \neq \tilde M_{u,\ell}|\tau_u^\star = x) & = \Pr(A_\ell) - \Pr(\tilde A_\ell | \tau_\rho^\star = x)\\
    &\leq |\Pr(A_\ell) - (1-\xi)(1-\alpha)| + |\Pr(\tilde A_\ell |\tau_\rho^\star = x) - (1-\xi)(1-\alpha)|.
\end{align*} The uniform bound by Lemma~\ref{lem:lim-of-two-witness} then yields \[\eps_\ell \leq C\gamma^\ell\] for some \(C > 0\) and \(\gamma\in(0,1)\) depending only on \(\Phi,\lambda,\) and \(a_0\). 

Taking a union bound and using the definition of \(\eps_\ell\), we obtain \[\Pr(\cE_{k,m}) \leq \sum_{r=0}^{m-1} \ee\left[\sum_{u\in L_r(\rho)}\mathbb1\{M_{u,k-r} \neq \tilde M_{u, k-r}\}\right] \leq \sum_{r=0}^{m-1}\ee[|L_r(\rho)|]\eps_{k-r} = \sum_{r=0}^{m-1} d^r\eps_{k-r}\] as desired.
\end{proof}

\begin{prop}\label{prop:sec-tree-E_k=tilde-E_k}
If \(\Phi, \lambda\), and \(\kappa\) satisfy Assumption~\ref{assump:noise-kernel}, then \[\lim_{k\to\infty}\ee\|\pi_{\rho,k}^\star - \tilde \pi_{\rho,k}^\star\| = 0.\] Consequently, \[\lim_{k\to\infty} (E_k^\star-\tilde E_k^\star) = 0.\]
\end{prop}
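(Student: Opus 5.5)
The plan is a triangle inequality through the synthetic pruned measures. For fixed \(m\le k\),
\[
\ee\|\pi_{\rho,k}^\star-\tilde\pi_{\rho,k}^\star\| \le \ee\|\pi_{\rho,k}^\star-\pi_{\rho,k,m}^\star\| + \ee\|\pi_{\rho,k,m}^\star-\tilde\pi_{\rho,k,m}^\star\| + \ee\|\tilde\pi_{\rho,k,m}^\star-\tilde\pi_{\rho,k}^\star\|.
\]
The first term is at most \(u_{k,m}\le\alpha^m\) by Lemmas~\ref{lem:E_k-S_km} and~\ref{lem:u_km}. The middle term is at most \(C\sum_{r=0}^{m-1}d^r\gamma^{k-r}\) by Lemma~\ref{lem:S_km-tilde S_km}. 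For fixed \(m\) this tends to \(0\) as \(k\to\infty\), since it is bounded by \(Cm\,\max(d,1)^m\gamma^{k-m+1}\).

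The third term requires a noisy analogue of Lemma~\ref{lem:E_k-S_km}.

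First, the posterior \(\tilde\pi_{\rho,k}^\star\) has the structure of Lemma~\ref{lem:posterior-measure-recursion}. By Lemma~\ref{lem:two-witness} and the atomless replacement, a.s. each observed noisy label is either a retained true label, which identifies an open component, or a fresh unique value. Given the true label at a leaf \(v\), the posterior of \(\tau_v^\star\) given \(\tilde\tau_v^\star=y\) is an atom at \(y\) plus a nonatomic part. Here \(a_0(\cdot)\) and \(\nu_x\) may be non-uniform, so the base case carries some weight on a \(y\)-dependent background measure rather than exactly on \(\mu\).

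To avoid redoing the whole induction, I would argue more robustly, directly from the definition of \(\|\cdot\|\). The pruned measure \(\tilde\pi_{\rho,k,m}^\star\) is obtained from the full noisy posterior by replacing the information below each witness stop with the exact label. This information is a.s. equal to the true \(\tau_u^\star\), by Lemma~\ref{lem:two-witness}. The replacement is therefore a coarsening of the observations at witness stops, together with forgetting the information below unresolved generation-\(m\) vertices. I would then repeat the monotonicity induction of Lemma~\ref{lem:E_k-S_km}: old atoms and the background only gain mass, and the error equals the posterior mass of new atoms. Taking expectations, this gives \(\ee\|\tilde\pi_{\rho,k}^\star-\tilde\pi_{\rho,k,m}^\star\|=\Pr(\tau_\rho^\star \text{ is a new noisy atom})\).

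This event requires the root label to be observed at level \(k\) through the root's open component, with no noisy witness stop in generations \(0,\dots,m-1\). Its probability is the analogue \(\tilde u_{k,m}\) of \(U_{k,m}\), computed conditionally on \(\tau_\rho^\star=x\) with \(P_j\) replaced by \(\tilde P_j(x)\). The recursion of Lemma~\ref{lem:u_km} goes through verbatim and gives \(\tilde u_{k,m}\le \prod_j f'(\tilde P_j(x))\).

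The bound \(\tilde P_j(x)\le \xi\) can fail, since \(\tilde P_j(x)=f^{\circ j}(1-a_0(x))\) may exceed \(\xi\). I would instead use Lemma~\ref{lem:gw-geometric-convergence-to-xi}: \(f'(\tilde P_j(x))\le f'(\xi)+C'\beta^j\). Hence for \(k-m\) large the product is at most \((\alpha+C'\beta^{k-m})^m\), which is small for large \(m\).

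Sending \(k\to\infty\) first and then \(m\to\infty\) gives \(\limsup_k\ee\|\pi_{\rho,k}^\star-\tilde\pi_{\rho,k}^\star\|\le 2\alpha^m\to0\).

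For the consequence, \(E_k^\star=\ee[\max_a \pi_{\rho,k}^\star(\{a\})]\) and similarly for \(\tilde E_k^\star\). The map \(\pi\mapsto\max_a\pi(\{a\})\) is \(1\)-Lipschitz in total variation, so \(|E_k^\star-\tilde E_k^\star|\le\ee\|\pi_{\rho,k}^\star-\tilde\pi_{\rho,k}^\star\|\to0\).

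The main obstacle is the noisy pruning bound, namely verifying that the posterior recursion and the monotonicity induction survive the nonuniform, \(x\)-dependent noise kernel. If that is delicate, a fallback is to bound the third term by conditional Jensen. Since \(\tilde\pi_{\rho,k,m}^\star\) is, up to the truncation prior, the posterior given a coarser \(\sigma\)-field \(\mathcal G\subset\sigma(T,\tilde\tau^\star_{L_k})\), the TV discrepancy is controlled by the probability that the finer information reveals the root label beyond \(\mathcal G\), which is again the event \(\tilde U_{k,m}\).
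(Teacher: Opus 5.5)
Your handling of the first two terms, of the limit order ($k\to\infty$ then $m\to\infty$), and of the consequence for $E_k^\star-\tilde E_k^\star$ is fine. The gap is the third term, $\ee\|\tilde\pi_{\rho,k,m}^\star-\tilde\pi_{\rho,k}^\star\|$.

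The noisy analogue of Lemma~\ref{lem:E_k-S_km} that you rely on is asserted, not proved. Its central identity (expected error $=$ probability that $\tau_\rho^\star$ is a new noisy atom) is false in general, for the reason you noticed yourself. Replaced observations of copies of the root label are drawn from the $x$-dependent $\nu_x$. So the nonatomic part $\eta$ of $\tilde\pi_{\rho,k}^\star$ is typically not a multiple of $\mu$; this happens already at $k=m=1$ for the kernel induced by a finite-$q$ noise matrix. The synthetic measure, by contrast, has background $\bar c_\rho\mu$. The total variation therefore picks up an extra contribution from $\eta-\bar c_\rho\mu$ beyond any mass difference. The induction of Lemma~\ref{lem:E_k-S_km} also does not carry over verbatim, since it uses the exact form $c_v\mu+\sum_a p_v(a)\delta_a$ to compute $A$, $B$, $C$.

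Your fallback does not close the gap either. You do not show that $\tilde\pi_{\rho,k,m}^\star$ is a posterior given some sub-$\sigma$-field, and the pruning decisions and the truncation prior make this unclear. Even granting it, ``the discrepancy is controlled by the probability that the finer information reveals the root label'' is a heuristic, not a bound.

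The missing idea is that no noisy pruning lemma is needed at all. Since $\tau_\rho^\star\to(T,\tau_{L_k(\rho)}^\star)\to\tilde\tau_{L_k(\rho)}^\star$ is a Markov chain, $\tilde\pi_{\rho,k}^\star=\ee[\pi_{\rho,k}^\star\mid\tilde\cF_k]$ with $\tilde\cF_k=\sigma(T,\tilde\tau_{L_k(\rho)}^\star)$. Moreover, $\tilde\pi_{\rho,k,m}^\star$ is $\tilde\cF_k$-measurable. Conditional Jensen for the convex TV norm then gives $\ee\|\tilde\pi_{\rho,k,m}^\star-\tilde\pi_{\rho,k}^\star\|\le\ee\|\tilde\pi_{\rho,k,m}^\star-\pi_{\rho,k}^\star\|$, which is at most the sum of your first two terms. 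Hence $\ee\|\pi_{\rho,k}^\star-\tilde\pi_{\rho,k}^\star\|\le 2\alpha^m+2C\sum_{r=0}^{m-1}d^r\gamma^{k-r}$. This is the paper's argument. With it, your computation of $\tilde u_{k,m}$ becomes unnecessary, though it is itself fine, including the correction $f'(\tilde P_j(x))\le\alpha+C'\beta^j$.
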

\begin{proof}
Let \(\cF_k = \sigma(T, \tau_{L_k(\rho)}^\star)\) and \(\tilde\cF_k = \sigma(T, \tilde\tau_{L_k(\rho)}^\star)\). We know that \(\tau_\rho^\star \to (T, \tau_{L_k(\rho)}^\star) \to \tilde\tau_{L_k(\rho)}^\star\) is a Markov chain. Hence, for any measurable \(A\), \begin{align*}
    \tilde\pi_{\rho,k}^\star(A) &= \Pr(\tau_\rho^\star\in A|\tilde \cF_k)\\
    &= \ee[\Pr(\tau_\rho^\star\in A|\sigma(\cF_k\cup \tilde \cF_k)) | \tilde\cF_k]\\
    &= \ee[\pi_{\rho,k}^\star(A) |\tilde\cF_k],
\end{align*} implying that \(\tilde\pi_{\rho,k}^\star = \ee[\pi_{\rho,k}^\star | \tilde\cF_k]\).

By the triangle inequality, \[\|\pi_{\rho,k}^\star-\tilde\pi_{\rho,k}^\star\| \leq \|\pi_{\rho,k}^\star-\tilde\pi_{\rho,k,m}^\star\| + \|\tilde\pi_{\rho,k,m}^\star-\tilde\pi_{\rho,k}^\star\|.\] Observing that \(\tilde\pi_{\rho,k,m}^\star\) is \(\tilde\cF_k\)-measurable, then the second term is \[\tilde\pi_{\rho,k,m}^\star - \tilde\pi_{\rho,k}^\star = \tilde\pi_{\rho,k,m}^\star-\ee[\pi_{\rho,k}^\star|\tilde\cF_k] = \ee[\tilde\pi_{\rho,k,m}^\star-\pi_{\rho,k}^\star|\tilde\cF_k].\] Convexity of the TV norm and conditional Jensen yields \[\|\tilde\pi_{\rho,k,m}^\star - \tilde\pi_{\rho,k}^\star\| =\left\|\ee[\tilde\pi_{\rho,k,m}^\star - \pi_{\rho,k} ^\star| \tilde\cF_k]\right\| \leq \ee[\|\tilde\pi_{\rho,k,m}^\star-\pi_{\rho,k}^\star\| | \tilde\cF_k].\] Taking expectation on both sides, \[\ee\|\tilde\pi_{\rho,k,m}^\star - \tilde\pi_{\rho,k}^\star\| \leq \ee\|\tilde\pi_{\rho,k,m}^\star-\pi_{\rho,k}^\star\|.\] Hence, \[\ee\|\pi_{\rho,k}^\star-\tilde\pi_{\rho,k}^\star\| \leq 2\ee\|\pi_{\rho,k}^\star-\tilde\pi_{\rho,k,m}^\star\|.\] Therefore, \begin{align*}
    \ee\|\pi_{\rho,k}^\star - \tilde\pi_{\rho,k}^\star\| &\leq 2\ee\|\pi_{\rho,k}^\star - \tilde\pi_{\rho,k,m}^\star\|\\
    [\text{triangle inequality}]&\leq 2\ee\|\pi_{\rho,k}^\star - \pi_{\rho,k,m}^\star\| + 2\ee\|\pi_{\rho,k,m}^\star - \tilde\pi_{\rho,k,m}^\star\|\\
    [\text{Lemmas~\ref{lem:E_k-S_km}, \ref{lem:u_km}, and~\ref{lem:S_km-tilde S_km}}]&\leq 2\alpha^m + 2C\sum_{r=0}^{m-1}d^r\gamma^{k-r}.
\end{align*}

Fix \(m\geq 1\). Then, \[\limsup_{k\to\infty} \ee\|\pi_{\rho,k}^\star - \tilde\pi_{\rho,k}^\star\| \leq 2\alpha^m.\] Taking \(m\to\infty\), we conclude the first desired equality.

Observe that \(E_k^\star = \ee[\sup_x\pi_{\rho,k}^\star(\{x\}) ]\) and \(\tilde E_k^\star = \ee[\sup_x\tilde\pi_{\rho,k}^\star(\{x\})]\). By Jensen, \[|E_k^\star-\tilde E_k^\star| \leq \ee\left|\sup_x\pi_{\rho,k}^\star(\{x\}) - \sup_x\tilde\pi_{\rho,k}^\star(\{x\})\right| \leq \ee\sup_x|\pi_{\rho,k}^\star(\{x\}) - \tilde\pi_{\rho,k}^\star(\{x\})| \leq \ee\|\pi_{\rho,k}^\star-\tilde\pi_{\rho,k}^\star\|.\] Since the noisy boundary labels are obtained by randomly perturbing the true boundary labels, then \(\tilde E_k^\star \leq E_k^\star\), meaning that we, in fact, have \[0 \leq E_k^\star-\tilde E_k^\star \leq\ee\|\pi_{\rho,k}^\star-\tilde\pi_{\rho,k}^\star\|.\] Taking the limit yields our desired claim.
\end{proof}


\section{Robust Reconstruction Under Constant Probability Label Retention}\label{subsec:easy-noise-matrix}
The previous section developed a quantitative pruning approximation for the continuum model. We now transfer this approximation to the finite-\(q\) Potts broadcast model. The only new obstruction is that, for finite \(q\), independently generated labels and erroneous noisy observations may coincide with positive probability. In this section, we prove robust reconstruction under constant probability label retention. In other words, \(a_q \geq a_0\) for some \(a_0\in(0,1)\), so that each true depth-\(k_q\) label is retained with probability at least \(a_0\). 

\begin{assumption}\label{assump:noise-matrix-easy}
Let \(\Phi\) be a PGF of an \(\nn_0\)-valued random variable with finite mean \(\Phi'(1) = d\). Let \(\lambda \in(0,1)\). Consider the sequences \((k_q)_{q\in\nn}\) of depth and \((\Delta^{(q)})_{q\in\nn}\) of noise matrices. We say that \(\Phi, \lambda, (k_q)\), and \((\Delta^{(q)})\) satisfy Assumption~\ref{assump:noise-matrix-easy} if \begin{enumerate}
    \item \(d\lambda > 1\),
    \item \(\Phi\) has finite second moment, 
    \item \(a_q \geq a_0\) for some constant \(a_0 \in (0,1]\),
    \item \(b_q \to 0\), and
    \item \(k_q \to \infty\)
\end{enumerate} as \(q\to\infty\).
\end{assumption}

We note that Assumption~\ref{assump:noise-matrix-easy} is a special case of Assumption~\ref{assump:noise-matrix} if we further assume it has finite moments of sufficiently high order. Indeed, \(a_q(d\lambda)^{k_q} \to\infty\), since \(a_q \geq a_0 > 0\) and \(d\lambda > 1\), and \(b_q^{2s-1}/a_q^{2s\zeta}\to 0\).

Recall that \(E_\infty^\star = \lim_{k\to\infty} E_k^\star\).

Throughout this section, starred quantities refer to the continuum model, while unstarred quantities refer to the finite-\(q\) model, unless stated otherwise. We suppress the superscript \((q)\) on finite-\(q\) quantities when clear from the context.

Let \(E_k := p_T(\Phi, \lambda, q, k) = \ee[\max_i X_{q,k}(i)]\) and \(\tilde E_k := \tilde p_T(\Phi, \lambda, q, k) = \ee[\max_i W_{q,k}(i)]\). Let \(\pi_{\rho, k} := X_{q,k}\) and \(\tilde\pi_{\rho, k} := W_{q,k}^\Delta\) denote the true posterior measures given the noiseless and noisy depth-\(k\) labels, respectively. Let \(\pi_{\rho, k, m}\) and \(\tilde\pi_{\rho, k, m}\) denote the auxiliary synthetic measures given the boundary labels produced by an analogous finite-\(q\) pruning construction and numerical recursion as in Definition~\ref{def:T_*} on the noiseless and noisy depth-\(k\) labels, respectively. In this section, we assume that the noisy model adheres to Assumption~\ref{assump:noise-matrix-easy}.

\begin{prop}\label{prop:tree-noise-matrix-easy}
If \(\Phi, \lambda, (k_q)\), and \((\Delta^{(q)})\) satisfy Assumption~\ref{assump:noise-matrix-easy}, then \[\ee\|X_{q, k_q} - W_{q, k_q}\|\to0,p_T(q) = E_{k_q}\to E_\infty^\star,\text{ and } \tilde p_T(q) = \tilde E_{k_q}\to E_\infty^\star\] as \(q\to\infty\).
\end{prop}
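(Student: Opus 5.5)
The plan is to reduce to a slowly growing depth \(\ell=\ell_q\to\infty\), on which the finite-\(q\) model is close to the continuum model of Section~\ref{sec:bayes-random-tree}, and then to import the continuum estimates. The main tool is the Markov-chain/Jensen argument from the proof of Proposition~\ref{prop:sec-tree-E_k=tilde-E_k}. The chains \(\tau_\rho\to(T,\tau_{L_\ell(\rho)})\to(T,\tau_{L_{k_q}(\rho)})\to\tilde\tau_{L_{k_q}(\rho)}\) give
\[X_{q,k_q}=\ee[X_{q,\ell}\mid\cF_{k_q}],\qquad W_{q,k_q}=\ee[X_{q,\ell}\mid\tilde\cF_{k_q}].\]
Hence for any \(\cF_{k_q}\)-measurable \(G\) and any \(\tilde\cF_{k_q}\)-measurable \(\tilde G\),
\[\ee\|X_{q,\ell}-X_{q,k_q}\|\le 2\,\ee\|X_{q,\ell}-G\|,\qquad \ee\|X_{q,\ell}-W_{q,k_q}\|\le 2\,\ee\|X_{q,\ell}-\tilde G\|.\]
It therefore suffices to produce such \(G,\tilde G\) with \(\ee\|X_{q,\ell}-G\|,\ \ee\|X_{q,\ell}-\tilde G\|\to0\). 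The accuracy statements then follow as in Proposition~\ref{prop:sec-tree-E_k=tilde-E_k}, using \(|\ee\max X-\ee\max W|\le\ee\|X-W\|\) and \(E_\ell\to E^\star_\infty\).

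\emph{Step 1 (deep estimates of depth-\(\ell\) labels).} For each \(u\in L_\ell(\rho)\), define \(\hat\tau_u\) from the noisy depth-\(k_q\) labels below \(u\). Set \(\hat\tau_u:=j\) if \(J_{u,k_q-\ell}(\tilde\tau)=\{j\}\). Otherwise set \(\hat\tau_u\) to an independent fresh uniform draw, or a \(u\)-indexed "unseen" symbol, so that distinct failures never collide. Do the same with noiseless labels. Arguing as in Lemma~\ref{lem:lim-of-two-witness}, conditional on \(\tau_u\), the estimate is correct with probability tending to \((1-\xi)(1-\alpha)>0\). Here one uses \(a_q\ge a_0\) and \(k_q-\ell\to\infty\), provided \(\ell\) grows slowly enough. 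A spurious two-witness set at \(u\) requires an accidental collision: either two independent uniform redraws agree, probability \(O(1/q)\) per pair, or noise produces a matching label, probability \(O(b_q)\) per pair. We will not union-bound over the whole subtree below \(u\), which may be huge. Instead, we intend to restrict attention to witnesses reachable at bounded depth, or to bound only the probability that a \emph{specified} wrong output \(j\) occurs. The latter is small because \(j\) must appear in two child-subtrees via independent mechanisms, each of probability \(O(q^{-1}+b_q)\) times an expected count. This shows that the induced channel \(\tau_u\mapsto\hat\tau_u\) is a constant-retention channel on depth \(\ell\) with vanishing off-diagonal mass, and that the outputs are conditionally independent across \(u\) given \(\tau_{L_\ell}\). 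This is the analogue of Assumption~\ref{assump:noise-kernel}.

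\emph{Step 2 (coupling at depth \(\ell\)).} Couple the finite-\(q\) process on \(T_{\le\ell}\) with the continuum one by rounding \(x\mapsto\lceil qx\rceil\). Couple the channel \(\hat\tau\) with an atomless kernel \(\kappa\) satisfying Assumption~\ref{assump:noise-kernel}, agreeing except on wrong outputs. Take \(\ell_q\) so slowly growing that \(|T_{\le\ell}|\le N_q\) with high probability, where \(N_q^2(q^{-1}+b_q+\eta_q)\to0\) and \(\eta_q\) is the Step 1 wrong-output bound. Then, with probability \(1-o(1)\), no collisions occur among true or estimated labels in \(T_{\le\ell}\). On this event the finite pruning \(\pi_{\rho,\ell,m}\) and \(\tilde\pi_{\rho,\ell,m}\) coincide with the continuum \(\pi^\star_{\rho,\ell,m}\) and \(\tilde\pi^\star_{\rho,\ell,m}\). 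Take \(G:=\pi_{\rho,\ell,m}\) built from the noiseless deep estimates, and \(\tilde G:=\tilde\pi_{\rho,\ell,m}\) built from the noisy ones. By Lemmas~\ref{lem:u_km} and~\ref{lem:S_km-tilde S_km} with \(k=\ell\to\infty\), their distance to \(\pi^\star_{\rho,\ell,m}\) is at most \(C\sum_{r<m}d^r\gamma^{\ell-r}+o(1)\).

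\emph{Step 3 (main obstacle: finite-\(q\) posterior versus pruning).} It remains to show \(\ee\|X_{q,\ell}-\pi_{\rho,\ell,m}\|\lesssim\alpha^m+o(1)\), which is a finite-\(q\) version of Lemma~\ref{lem:E_k-S_km}. I would first show \(\ee\|X_{q,\ell}-\pi^\star_{\rho,\ell}\|\to0\) under the coupling, where the continuum posterior is pushed forward by rounding. On the no-collision event with \(|T_{\le\ell}|\le N_q\), the finite posterior is computed by the same BP recursion. The only difference is that a mutated edge lands on a given label with probability \(1/q\) rather than \(0\), so each message is perturbed by a factor \(1+O(N_q/q)\). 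A stability estimate for the recursion of Lemma~\ref{lem:posterior-measure-recursion} over at most \(N_q\) vertices then gives a TV error of \(O(N_q^2/q)\). Combining Steps 1 through 3, and letting first \(q\to\infty\) and then \(m\to\infty\), yields \(\ee\|X_{q,k_q}-W_{q,k_q}\|\to0\) and both accuracies \(\to E^\star_\infty\). I expect the main difficulty to lie in controlling the wrong-output probability in Step 1 without a union bound over the large deep subtree, and in the perturbation estimate of Step 3.
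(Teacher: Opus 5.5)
Your outer architecture matches the paper's: a slowly growing depth $\ell$ with $d^{2\ell}(q^{-1}+b_q)\to0$, coupling to the continuum, importing Lemmas~\ref{lem:E_k-S_km}--\ref{lem:S_km-tilde S_km}, and the transfer via $X_{q,k_q}=\ee[X_{q,\ell}\mid\cF_{k_q}]$ and $W_{q,k_q}=\ee[X_{q,\ell}\mid\tilde\cF_{k_q}]$. However, Step~1 fails as written.

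Assumption~\ref{assump:noise-matrix-easy} puts no upper bound on $k_q$, and making $\ell$ grow slowly only makes the block height $k_q-\ell$ larger. Suppose you run a single two-witness test $J_{u,k_q-\ell}(\tilde\tau)$ over that block. Fresh mutations alone then create, in expectation, of order $d^{2(k_q-\ell)}/q$ spurious cross-subtree coincidences. Once $d^{k_q-\ell}\gg q\log q$, every label of $[q]$ already occurs among the freshly mutated leaves of every surviving child-subtree. So $J_{u,k_q-\ell}$ is typically all of $[q]$, and your estimate is correct with probability about $1/q$. The retention $(1-\xi)(1-\alpha)$ you quote is a continuum fact and does not survive here; your Step~1 can only work in the regime $k_q\lesssim\log_d q$.

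Neither of your hedges repairs this. A depth-$\ell$ vertex has no witnesses at bounded distance when the only observations sit at depth $k_q$. Bounding a specified wrong output by $O(q^{-1}+b_q)$ times an expected count of order $d^{k_q-\ell}$ gives nothing. Moreover, a lower bound on the correct-output probability requires excluding all wrong labels simultaneously, so the sum over $j$ cannot be avoided.

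The missing idea is the iterated fixed-height decoder of Lemma~\ref{lem:h_0-decoder}. For a fixed block height $h_0$, the expected number of leaf pairs in distinct child-subtrees is the constant $C_{h_0}=\tfrac12\ee[D(D-1)]d^{2(h_0-1)}$. Hence all wrong witnesses together have probability at most $C_{h_0}(b+1/q)$, and a specified wrong output has probability at most $C_{h_0}(b+1/q)^2+1/q\le b/2+2/q$ once $b\le b_*$. Choosing $h_0$ with $g_{h_0}(a_*)\ge 4a_*$ keeps the diagonal at least $a_*$. The resulting contraction $b_{t+1}\le b_t/2+2/q$ is what allows the decoder to be applied $(k_q-R_q)/h_0$ times, an unbounded number, without error accumulation. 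It produces depth-$R_q$ labels whose channel has diagonal at least $a_*$ and off-diagonal at most $b_q+4/q$, which is exactly the input your Steps~2--3 require.

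Separately, your Step~3 stability estimate is unnecessary. The finite boundary is a function of the continuum boundary, so $X_{q,\ell}=\ee[(T_q)_*\pi^\star_{\rho,\ell}\mid\cF_{q,\ell}]$. The continuum recursion applied to the finite labels is $\cF_{q,\ell}$-measurable and coincides with $(T_q)_*\pi^\star_{\rho,\ell}$ off the collision event. Your own Jensen argument then gives $\ee\|X_{q,\ell}-(T_q)_*\pi^\star_{\rho,\ell}\|\le 2\Pr(W_\ell^0)$ directly.
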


The main difficulty in transferring the continuum estimates to the finite-alphabet model is that independent redraws and erroneous observations may coincide with true labels with positive probability. We first couple the finite-\(q\) and continuum models and control accidental collisions. This allows us to transfer the continuum pruning estimates to a root neighborhood of depth \(R_q\to\infty\), with \(d^{2R_q}(q^{-1}+b_q)\to0\). The collision bounds below then show that this neighborhood contains no accidental collisions with probability tending to one, allowing us to apply the continuum pruning estimates there.

To handle arbitrary boundary depths, we then estimate the depth-\(R_q\) labels from the original observations. The key is to preserve a positive correct-output probability and vanishing probabilities of specified incorrect outputs throughout this reduction. We take the bottom \(k_q - R_q\), split it into sufficiently sized blocks, and recursively predict the top labels of each block using the top labels of the block below until we have a prediction of the depth-\(R_q\) labels. We show that these predictions can themselves be thought of as noisy labels generated according to a noise matrix that continues to satisfy Assumption~\ref{assump:noise-matrix-easy}. Consequently, whenever \(a_q\ge a_0 > 0\) and \(b_q\to0\), the noisy and noiseless finite-\(q\) root posteriors are asymptotically equivalent in expected total variation.

For two conditionally independent noisy labels with the same true input \(i\), \[\Pr(\tilde\tau_u=\tilde\tau_v\neq i | \tau_u = \tau_v = i) = \sum_{j\neq i} \Delta_{ij}^2 \leq b_q\sum_{j\neq i}\Delta_{ij} \leq b_q.\] For distinct true inputs \(i\neq i'\), \[\Pr(\tilde\tau_u=\tilde\tau_v | \tau_u = i, \tau_v = i') = \sum_j \Delta_{ij}\Delta_{i'j} \leq \Delta_{ii}\Delta_{i'i} + \sum_{j\neq i}\Delta_{ij}\Delta_{i'j} \leq 2b_q.\] We know that in the Potts broadcast model, mutation labels collide with probability \(1/q\).

Let \[L_{\leq k}(u) = \bigcup_{r=0}^k L_r(u).\] Suppose \(N_k := |L_{\leq k}(\rho)| \leq M\). Then there are at most \(M\) vertices at which a fresh label is drawn, namely the root and the vertices reached across mutation edges, and these fresh label draws are independent. There are also at most \(M\) noisy boundary labels. 

We now couple the \(q\)-state Potts broadcast process to the continuous broadcast process by using the same copy-or-mutation decisions on every edge and the same \(\mu\)-distributed draws at the root and at every mutation. Define \[T_q(x):=\min\{q,1+\lfloor qx\rfloor\}\] for \(x\in[0,1]\) and \[I_j := T_q^{-1}(\{j\})\] for \(j\in[q]\), and set \[\tau_u=T_q(\tau_u^\star)\] for every \(u\in T\). Since \(T_q(Y)\sim\mathrm{Unif}([q])\) whenever \(Y\sim\mu\), this gives the \(q\)-state Potts broadcast process. 

Let \(W_k^0\) be the event that two distinct fresh continuum labels drawn at the root or at mutation vertices in \(L_{\leq k}(\rho)\) are mapped by \(T_q\) to the same element of \([q]\). Then, \[\Pr(W_k^0 | T_{\leq k}) \leq \frac{N_k^2}{2q}.\] On \((W_k^0)^c\), we have \(\tau_u=\tau_v\) if and only if \(\tau_u^\star=\tau_v^\star\) for all \(u,v\in L_{\le k}(\rho)\). Hence, on this event \((W_k^0)^c\), the equality pattern of the \(q\)-state labels agrees exactly with that of the continuous broadcast process.

For \(u\in L_k(\rho)\), let \(\tilde\tau_u\) denote the actual finite-\(q\) noisy observation, so that, conditional on \(\tau_u=i\), \[\Pr\left(\tilde\tau_u=j\big|\tau_u=i\right)=\Delta_{ij}.\] We couple this finite-\(q\) noisy observation to an auxiliary continuum noisy observation \(\tilde\tau_u^\star\) by setting \[\tilde\tau_u^\star= \begin{cases}
    \tau_u^\star, & \text{if }\tilde\tau_u=\tau_u,\\
    \tilde Y_{u,j}, & \text{if }\tilde\tau_u=j\neq\tau_u,
\end{cases}\] where \(\tilde Y_{u,j}\sim\mathrm{Unif}(I_j)\) are independent over \(u\). Conditional on \(\tau_u^\star=x\), the auxiliary observation \(\tilde\tau_u^\star\) equals \(x\) with probability \(\Delta_{T_q(x),T_q(x)}\). Thus, it is an auxiliary noisy kernel of the form in Assumption~\ref{assump:noise-kernel}, with \[a_q(x) = \Delta_{T_q(x),T_q(x)} \geq a_0.\] In particular, for any \(x\in I_i\) and Borel \(B\subset[0,1]\), \[\kappa(x, B) = \Delta_{ii}\mathbb1\{x\in B\} + q\sum_{j\neq i}\Delta_{ij}\mu(B\cap I_j).\]

Let \(W_k\) be the union of \(W_k^0\), the event that an erroneous noisy boundary observation agrees with a correctly observed boundary label, and the event that two erroneous noisy boundary observations agree. This is the event of accidental collisions that we wish to avoid. On \(W_k^c\), the finite-\(q\) and continuum constructions have the same equality patterns for both the true labels and the noisy boundary observations. A union bound yields \[\Pr\left(W_k | T_{\leq k}\right)  \leq C_0N_k^2\left(\frac{1}{q}+b_q\right)\] for some constant \(C_0 > 0\).

\begin{lem}\label{lem:pi_rho,kq-tilde-pi_rho,kq}
Fix \(a_0\in(0,1]\) and suppose \(a_q \geq a_0\). There are constants \(C > 0\) and \(\gamma\in(0,1)\), depending only on \(\Phi, \lambda\), and \(a_0\), such that, for every \(m\in[k]\), \[\ee\|\pi_{\rho, k}-\tilde\pi_{\rho,k,m}\| \leq \alpha^m + C\sum_{r=0}^{m-1}d^r\gamma^{k-r} + Cd^{2k}\left(\frac{1}{q}+ b_q\right).\] Moreover, \[|E_k - E_k^\star| \leq Cd^{2k}/q + 1/q.\]
\end{lem}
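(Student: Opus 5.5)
The plan is to work on the coupling just constructed, in which the finite-\(q\) labels are \(\tau_u = T_q(\tau_u^\star)\) and the noisy observations are coupled with the auxiliary continuum observations \(\tilde\tau_u^\star\). We then compare every finite-\(q\) object with its continuum counterpart on the no-collision event \(W_k^c\).

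\textbf{Step 1: equality patterns coincide off \(W_k\).} On \(W_k^c\), the equality pattern of \(\tau\) on \(L_k(\rho)\) matches that of \(\tau^\star\), and the pattern of \(\tilde\tau\) matches that of \(\tilde\tau^\star\). The pruning construction of Definition~\ref{def:T_*} depends only on equality patterns, as do the numerical recursion of Lemma~\ref{lem:posterior-measure-recursion} and the witness sets \(J_{u,\ell}\). So the finite-\(q\) synthetic measure \(\tilde\pi_{\rho,k,m}\) is the pushforward under \(T_q\) of \(\tilde\pi_{\rho,k,m}^\star\), with \(\mu\) mapped to the uniform law on \([q]\). Since pushforward does not increase total variation,
\[\|\pi_{\rho,k}-\tilde\pi_{\rho,k,m}\| \le \|\pi_{\rho,k}-T_q\#\pi_{\rho,k}^\star\| + \|\pi_{\rho,k}^\star-\tilde\pi_{\rho,k,m}^\star\|.\]

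\textbf{Step 2: bound the continuum term.} By the triangle inequality and Lemmas~\ref{lem:E_k-S_km}, \ref{lem:u_km}, \ref{lem:S_km-tilde S_km}, the second term has expectation at most \(\alpha^m + C\sum_{r<m} d^r\gamma^{k-r}\). Lemma~\ref{lem:S_km-tilde S_km} applies because the coupled kernel \(\kappa\) satisfies Assumption~\ref{assump:noise-kernel} with retention at least \(a_0\), and its constants depend only on \(\Phi,\lambda,a_0\).

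\textbf{Step 3: compare the finite-\(q\) and continuum posteriors.} This is the main obstacle, since the finite-\(q\) posterior \(\pi_{\rho,k}\) is a genuine Bayes posterior with collisions. I would not compare the two posteriors pathwise. Instead, use that both \(\pi_{\rho,k}\) and \(T_q\#\pi^\star_{\rho,k}\) are conditional laws of \(\tau_\rho\) given \(\sigma\)-fields. The first is given \(\sigma(T,\tau_{L_k})\), which is coarser than \(\cF_k^\star=\sigma(T,\tau^\star_{L_k})\). So \(\pi_{\rho,k}=\ee[T_q\#\pi^\star_{\rho,k}\mid T,\tau_{L_k}]\). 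By conditional Jensen, \(\ee\|\pi_{\rho,k}-T_q\#\pi^\star_{\rho,k}\|\) is at most \(2\,\ee\|T_q\#\pi^\star_{\rho,k}-\Gamma\|\) for any \(\sigma(T,\tau_{L_k})\)-measurable \(\Gamma\). Take \(\Gamma\) to be the continuum posterior recursion run on the finite labels, treating each distinct label as a distinct component; this is exactly \(T_q\#\pi^\star_{\rho,k}\) on \((W^0_k)^c\). Their difference is then bounded by \(\Pr(W_k^0)\le \ee[N_k^2]/(2q)\). Also, \(\ee N_k^2\le C d^{2k}\) by the finite second moment of \(\Phi\), via the standard second-moment bound for Galton--Watson generation sizes. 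A residual \(1/q\) comes from the uniform part and tie-breaking, since \(T_q\) maps \(\mu\) to a law whose maximal atom is \(1/q\), not zero.

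\textbf{Step 4: assemble the bounds.} Adding the contribution \(\Pr(W_k)\le C_0\ee N_k^2(q^{-1}+b_q)\le Cd^{2k}(q^{-1}+b_q)\), which accounts for the noisy side and the failure of Step 1, gives the first inequality. For \(|E_k-E_k^\star|\), note that \(E_k=\ee\max_i\pi_{\rho,k}(i)\). The maximal atom of \(T_q\#\pi^\star_{\rho,k}\) differs from \(\sup_x\pi^\star_{\rho,k}(\{x\})\) by at most \(c/q\le 1/q\), the mass the uniform part adds to a cell. Combining this with the Step 3 bound, without noise so no \(b_q\) appears, yields \(|E_k-E_k^\star|\le Cd^{2k}/q+1/q\).
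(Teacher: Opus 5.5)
Your proposal is correct and follows essentially the same route as the paper: the \(T_q\) coupling, contraction of total variation under pushforward off \(W_k\), the conditional-Jensen comparison of \(\pi_{\rho,k}\) with \((T_q)_*\pi^\star_{\rho,k}\) through a \(\sigma(T,\tau_{L_k(\rho)})\)-measurable proxy (the paper's \(H_{q,k}\)), the bound \(\ee N_k^2\le Cd^{2k}\), and Lemmas~\ref{lem:E_k-S_km}, \ref{lem:u_km}, and~\ref{lem:S_km-tilde S_km} for the continuum term. One small imprecision: the \(c/q\le 1/q\) comparison of maximal atoms in Step 4 holds only on \((W_k^0)^c\), since on \(W_k^0\) several atoms may land in one cell, so an extra \(\Pr(W_k^0)\) term appears there (harmlessly absorbed into \(Cd^{2k}/q\)); also, the stand-alone \(1/q\) arises in that step, not in the total-variation bound of Step 3.
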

\begin{proof}
Let \(N_k = |L_{\leq k}(\rho)|, Z_r^T = |L_r(\rho)|\), and \(\sigma_D^2 = \mathrm{Var}(D)\), where \(D\sim \Phi\). We know that the branching process gives \(\ee Z_r^T = d^r\) and \(\ee(Z_r^T)^2 \leq C_1d^{2r}\) (see, e.g., Chapter I Section 2 of~\cite{AN72}). Since \(d > 1\), Minkowski's inequality gives \((\ee N_k^2)^{1/2} \leq \sum_{r=0}^{k} (\ee(Z_r^T)^2)^{1/2} \leq C_2d^k.\) Hence, \[\Pr(W_k) = \ee[\Pr(W_k | N_k)] \leq \ee[C_0N_k^2(q^{-1}+b_q)] = C_3d^{2k}(q^{-1}+b_q).\]

Consider \(Q_{q, k} :=(T_q)_*\pi_{\rho,k}^\star\), the pushforward measure of \(\pi_{\rho, k}^\star\) by \(T_q\). We know that \(\pi_{\rho, k}^\star = \mathcal L(\tau_\rho^\star | T_{\leq k}, \tau_{L_k(\rho)}^\star)\), so \(Q_{q, k} = \mathcal L(T_q(\tau_\rho^\star) | T_{\leq k}, \tau_{L_k(\rho)}^\star) = \mathcal L(\tau_\rho| T_{\leq k}, \tau_{L_k(\rho)}^\star)\) under our coupling \(T_q(\tau_\rho^\star) = \tau_\rho\). We compare \(Q_{q, k}\) and \(\pi_{\rho,k}\), where \(\pi_{\rho,k} = \mathcal L(\tau_\rho | T_{\leq k}, \tau_{L_k(\rho)})\). Since the continuous boundary labels contain strictly more information than the discrete labels, the distributions are different. In fact, we know that \[\pi_{\rho,k} = \ee[Q_{q, k}|\cF_{q,k}],\] where \(\cF_{q,k} = \sigma(T_{\leq k}, \tau_{L_k(\rho)})\).

Now, let \(H_{q,k}\) be the probability vector on \([q]\) obtained by applying the continuum posterior recursion of Lemma~\ref{lem:posterior-measure-recursion} to the finite-\(q\) true boundary labels, treating each distinct label in \([q]\) as a distinct atom of the continuum model. That is, we initialize each boundary vertex with \(\delta_{\tau_v}\), recursively apply the same numerical update rule as in Lemma~\ref{lem:posterior-measure-recursion}, and identify the nonatomic component \(c_u\mu\) with \(c_u U_q = c_uT_q(\mu)\), where \(U_q\) is the uniform distribution on \([q]\). We break ties uniformly at random and independent of other assumptions. We use \(H_{q,k}\) to compare \(Q_{q, k}\) and \(\pi_{\rho,k}\). On the event of \((W_k^0)^c\), we must have \(H_{q,k} = Q_{q, k}\), since we have no accidental collisions leading to the same equality patterns and both posteriors treat the boundary labels in the same way. Thus, by the triangle inequality, the fact that \(H_{q,k}\) is \(\cF_{q,k}\)-measurable, and conditional Jensen, we have that \begin{align*}
    \ee\|Q_{q, k} - \pi_{\rho, k}\| &\leq  \ee\|\ee[Q_{q, k}|\cF_{q,k}] - H_{q,k}\| + \ee\|H_{q,k} - Q_{q, k}\|\\
    &=\ee\|\ee[Q_{q, k} - H_{q,k}|\cF_{q,k}]\| + \ee\|H_{q,k} - Q_{q, k}\|\\
    &\leq 2\ee\|Q_{q, k} - H_{q,k}\|\\
    &\leq 2\Pr(W_k^0).
\end{align*}

Let \(\tilde\pi_{\rho, k,m}^\star\) be the continuum noisy synthetic message for the auxiliary kernel above. Then \(\tilde\pi_{\rho,k,m} = (T_q)_*\tilde\pi_{\rho,k,m}^\star\) outside \(W_k\). Because the TV norm contracts under pushforward, then \begin{align*}
    \ee\|\pi_{\rho, k} - \tilde \pi_{\rho, k,m}\| &\leq \ee\|\pi_{\rho, k} - Q_{q, k}\| + \ee\|Q_{q, k} - (T_q)_*\tilde \pi_{\rho, k,m}^\star\| + \Pr(W_k)\\
    &\leq 2\Pr(W_k) + \ee\|\pi_{\rho, k}^\star- \tilde\pi_{\rho, k,m}^\star\| + \Pr(W_k)\\
    [\text{Lemmas~\ref{lem:E_k-S_km},~\ref{lem:u_km},~\ref{lem:S_km-tilde S_km}}]&\leq 3\Pr(W_k) + \alpha^m + C_4\sum_{r=0}^{m-1}d^r\gamma^{k-r}.
\end{align*} Using our earlier bound on \(\Pr(W_k)\) and choosing \(C\) appropriately yields our first desired inequality.

Write \(\pi_{\rho, k}^\star = c_\rho\mu + \sum_zp_\rho(z)\delta_z\). Recall that \(E_k^\star = \ee[\sup_z \pi_{\rho,k}^\star(\{z\})] = \ee[\sup_z p_\rho(z)]\). Outside \(W_k^0\), distinct atoms of \(\pi_{\rho, k}^\star\) are mapped by \(T_q\) to distinct elements of \([q]\). Hence, for every \(i\in[q]\), \[Q_{q, k}(i) = \frac{c_\rho}{q} + \sum_{z: T_q(z) = i} p_\rho(z),\] and on \((W_k^0)^c\), the sum on the right contains at most one nonzero atomic term. Thus, \[\ee\left|\max_iQ_{q, k}(i) - \sup_z\pi_{\rho, k}^\star(\{z\})\right| \leq \Pr(W_k^0) + 1/q\] on \((W_k^0)^c\). Indeed, if \(\pi_{\rho, k}^\star\) has an atom of maximal mass \(p\), then \(\max_i Q_{q, k}(i) = c_\rho/q + p\), while if \(\pi_{\rho, k}^\star\) has no atoms, then \(Q_{q, k} = U_q\). 

We know that \(|\max_i x_i - \max_i y_i| \leq \|x-y\|\) for any two probability vectors \(x, y\) because the maximum-coordinate functional on probability vectors is 1-Lipschitz with respect to the total variation norm. Therefore, \begin{align*}
    |E_k^\star - E_k| &= \left|\ee\max_i \pi_{\rho,k}(i) - \ee\sup_z\pi_{\rho, k}^\star(\{z\})\right|\\
    &\leq \ee\left|\max_i \pi_{\rho,k}(i) - \max_i Q_{q, k}(i)\right| + \ee\left|\max_i Q_{q, k}(i) - \sup_z\pi_{\rho, k}^\star(\{z\})\right|\\
    &\leq \ee\|\pi_{\rho,k} - Q_{q, k}\| + \Pr(W_k^0) + \frac{1}{q}\\
    &\leq 3\Pr(W_k^0) + \frac{1}{q}\\
    &= 3\ee[\Pr(W_k^0|N_k)] + \frac{1}{q}\\
    &\leq C\frac{d^{2k}}{q} + \frac{1}{q}
\end{align*} for appropriate \(C > 0\) as desired.
\end{proof}

Lemma~\ref{lem:pi_rho,kq-tilde-pi_rho,kq} applies only at depths where collisions remain unlikely. We now remove this restriction by repeatedly estimating labels from observations a fixed number \(h_0\) of generations below, using the following rule.

We reduce the depth of the tree from \(k_q\) to \(R_q\) by iteratively applying an \(h_0\)-decoder defined as follows. Each application of the \(h_0\)-decoder reduces the depth of the tree by \(h_0\).

\begin{defin}
Let \(\sigma\) be an arbitrary labeling with alphabet \([q]\) on \(T\subset\nn^*\). Then, for any \(u\in T\subset\nn^*\), define \[J_{u,k}^{(q)}(\sigma) := \{j\in[q]: \exists v\neq w\in L_k(u)\text{ s.t. }\sigma_v=\sigma_w=j\text{ and }\operatorname{LCA}(v,w) = u\}.\] Equivalently, $j \in J_{u,k}^{(q)}$ if and only if the label $j$ appears in at least two descendants at distance $k$ from $u$ belonging to distinct child-subtrees of $u$.
\end{defin}

\begin{defin}(\(h_0\)-decoder)
Let \(h_0\in \nn\). Then an application of the \(h_0\)-\emph{decoder} given a tree with an arbitrary depth-\(k\) labeling \(\tau_{L_k(\rho)}\) estimates the labels of depth-\((k-h_0)\) by \[\bar\tau_u := \begin{cases}
    i, &\text{if }J_{u,h_0}^{(q)} = \{i\}\\
    Y_u^{(q)}\sim\mathrm{Unif}([q]), &\text{otherwise}.
\end{cases}\] Here, the random variables \(Y_u^{(q)}\) are independent for every depth-\((k-h_0)\) vertex.
\end{defin}

\begin{lem}\label{lem:h_0-decoder}
For every \(a_0 \in(0,1]\), there exist constants \(a_*\in(0,a_0], b_* > 0,  h_0\in\nn\), and \(q_0\in\nn\), such that for every \(q\geq q_0\), if \[\min_i \Delta_{ii} \geq a_*\text{ and }\max_{i\neq j}\Delta_{ij} \leq b_*,\] where \(\Delta\) is the noise matrix for the depth-\(k\) labels, then applying the \(h_0\)-decoder to the depth-\(k\) labels produces a noise matrix \(\hat\Delta\) for the depth-\((k-h_0)\) labels satisfying \[\min_i\hat\Delta_{ii} \geq a_*\text{ and }\max_{i\neq j}\hat\Delta_{ij} \leq \frac{\max_{i\neq j}\Delta_{ij}}{2} + \frac{2}{q} \leq b_*.\]
\end{lem}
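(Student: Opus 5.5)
We will analyse one application of the \(h_0\)-decoder at a fixed depth-\((k-h_0)\) vertex \(u\) with true label \(\tau_u=i\). Conditional on \(\tau_u = i\), the subtree of depth \(h_0\) below \(u\), its labels, and the noisy observations at its leaves are independent across different \(u\). The induced matrix \(\hat\Delta_{ij} = \Pr(\bar\tau_u = j\mid \tau_u=i)\) is therefore well defined and does not depend on \(u\) or \(k\). We need two estimates.

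\textbf{Diagonal entry.} We lower-bound \(\hat\Delta_{ii}\) using the witness event. Let \(\tilde A\) be the event that at least two open children of \(u\) each have an open-path descendant at distance \(h_0\) whose noisy observation equals \(i\). On this event \(i\in J^{(q)}_{u,h_0}\).

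\emph{Step 1 (probability of \(\tilde A\)).} Couple to the continuum model as before, with retention probability at least \(a_*\). By Lemma~\ref{lem:lim-of-two-witness}, \(\Pr(\tilde A)=\omega(f^{\circ(h_0-1)}(1-a(i)))\), and this is at least \(\omega(f^{\circ(h_0-1)}(1-a_*))\) because \(\omega\) is decreasing on \([0,1)\). This bound tends to \((1-\xi)(1-\alpha)>0\) as \(h_0\to\infty\), for any fixed \(a_*>0\). We choose \(a_* := \min\{a_0,(1-\xi)(1-\alpha)/4\}\) and then fix \(h_0\) large enough that \(\Pr(\tilde A)\ge (1-\xi)(1-\alpha)/2 \geq 2a_*\).

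\emph{Step 2 (uniqueness).} We must rule out a second label \(j\neq i\) also lying in \(J^{(q)}_{u,h_0}\); in that case the decoder outputs a uniform label. Such a \(j\) requires an accidental collision among the at most \(N_{h_0}\) vertices of the depth-\(h_0\) subtree, where collisions come either from fresh draws (probability \(\le 1/q\) per pair) or from erroneous observations (probability \(\le 2b\) per pair). Hence
\[
\Pr(|J|\ge 2)\le C_0\,\ee N_{h_0}^2\,(1/q+b_*),
\]
and \(\ee N_{h_0}^2\le C d^{2h_0}\) by the second-moment bound. With \(h_0\) now fixed, we choose \(b_*\) small and \(q_0\) large so that this is at most \(a_*\). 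Then \(\hat\Delta_{ii}\ge 2a_*-a_* = a_*\).

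\textbf{Off-diagonal entry.} The decoder outputs \(j\neq i\) either through the uniform fallback, which contributes at most \(1/q\), or because \(j\in J^{(q)}_{u,h_0}\). For the second case, take two leaves \(v,w\) in distinct child subtrees with noisy label \(j\). Each noisy label \(\tilde\tau_v=j\) arises in one of two ways: an erroneous observation (probability \(\le b\), where \(b:=\max_{i\neq j}\Delta_{ij}\)), or a correctly observed true label equal to \(j\) produced by a fresh mutation (probability \(\le 1/q\) per mutation vertex). The two leaves lie in different branches, so these sources are distinct and independent. Summing over pairs gives
\[
\Pr(j\in J)\le \ee\big[N_{h_0}^2\big]\,(b+1/q)^2 \le C d^{2h_0}\,(b^2+2b/q+1/q^2).
\]
This requires care: whenever the fresh source is shared by both leaves, it must sit at or above \(u\), i.e.\ be \(u\)'s own label \(i\neq j\), which is excluded. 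Since \(b\le b_*\), the right-hand side is at most \(Cd^{2h_0}b_*\,b + Cd^{2h_0}(2b_*+1/q)/q\). Shrinking \(b_*\) and enlarging \(q_0\) makes this at most \(b/2 + 1/q\). Hence \(\hat\Delta_{ij}\le b/2+2/q\), and this is \(\le b_*\) once \(q\ge 4/b_*\).

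\textbf{Main obstacle.} The delicate point is the off-diagonal bound. It has to be \emph{quadratic} in the per-leaf spurious probability in order to produce the factor \(b/2\), not merely \(O(b)\). This needs the independence of the two witnesses in distinct child subtrees. Because the family of copies under one mutation can be large, I would do the union bound over pairs of distinct ``sources'' (mutation vertices or erroneous leaves) in the two branches, weighted by the subtree size, using the second-moment bound on \(N_{h_0}\).
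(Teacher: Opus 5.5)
Your proposal is correct and follows essentially the same route as the paper. You lower-bound the true two-witness probability by \(\omega(f^{\circ(h_0-1)}(1-a_*))\) to fix \(h_0\), and you control wrong witnesses by a union bound over leaf pairs in distinct child-subtrees; these are conditionally independent given \(\tau_u=i\), and each leaf reads a fixed \(j\neq i\) with probability at most \(b+1/q\), which gives the quadratic term needed for \(b/2+2/q\).

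The differences are cosmetic: you use monotonicity of \(\omega\) where the paper uses thinning, and for the diagonal you use an accidental-collision count where the paper uses \(C_{h_0}\sum_{j\neq i}p_j^2\le C_{h_0}(b+1/q)\). The worry in your last paragraph is unnecessary, since a union bound over leaf pairs needs only the cross-branch independence and no control of copy families within a branch.
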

\begin{proof}
Set \(a_* := \min\{a_0/2, (1-\xi)(1-\alpha)/8\}\). Let \(u\in L_{k-h_0}(\rho)\). Consider only descendants connected to \(u\) by open copy edges. Each of their boundary labels is correct with probability at least \(a_*\). By an independent thinning process, we may reduce this probability to exactly \(a_*\), giving a lower bound to the two-witness event. By the computation in Lemma~\ref{lem:lim-of-two-witness}, the two-witness probability in a height-\(h\) block is \[g_h(a_*)=\omega(f^{\circ(h-1)}(1-a_*)) \to (1-\xi)(1-\alpha).\] 

Fix \(h_0\) sufficiently large such that \(g_{h_0}(a_*) \geq 4a_*\). Let \(b := \max_{r\neq s}\Delta_{rs}\). Fix \(i\in[q]\). Conditional on \(\tau_u = i\) and any realization of subtree below \(u\), the distribution of a boundary observation at distance \(h_0\) is the row \(i\) of \(M_q^{h_0}\Delta\). For \(j\neq i\), put \[p_j := (M_q^{h_0}\Delta)_{ij} = \lambda^{h_0} \Delta_{ij} + (1-\lambda^{h_0})\frac{1}{q}\sum_{\ell=1}^q\Delta_{\ell j} \leq b + \frac{1}{q}.\] We know that \(\sum_j p_j = 1\).

Two boundary observations in different first-child subtrees are independent conditional on the label of \(u\) and the tree. The expected number of unordered pairs of leaves in distinct first-child subtrees is \[C_{h_0} := \frac{1}{2}\ee\sum_{\substack{v, w\in L_1(u)\\ v\neq w}} |L_{h_0-1}(v)||L_{h_0-1}(w)| = \frac{\ee[D(D-1)]}{2}d^{2(h_0-1)} < \infty.\] A union bound gives \[\Pr(j\in J_{u, h_0}^{(q)} | \tau_u = i) \leq C_{h_0}p_j^2.\] Then \begin{align*}
    \Pr(\text{some wrong witness label}|\tau_u = i) &\leq C_{h_0}\sum_{j\neq i}p_j^2\\
    &\leq C_{h_0}(b+1/q)\sum_{j\neq i}p_j\\
    &\leq C_{h_0}(b+1/q).
\end{align*}

If a true witness exists and no wrong witness exists, the output is correct. Thus, \[\hat\Delta_{ii} \geq 4a_* - C_{h_0}(b+1/q).\] For \(j\neq i\), the decoder outputs \(j\) only if \(j\) has a witness or if its uniformly random label is \(j\). Hence, \[\hat\Delta_{ij} \leq C_{h_0}(b+1/q)^2 + 1/q.\] Choose \(b_* > 0\) sufficiently small so that \(C_{h_0}b_* \leq a_*\) and \(2C_{h_0}b_* \leq 1/2\). Then choose \(q_0\) sufficiently large so that \(C_{h_0}/q \leq a_*, 2C_{h_0}/q^2\leq 1/q\), and \(2/q \leq b_*/2\) for \(q \geq q_0\). Then \(\hat\Delta_{ii} \geq 4a_* - C_{h_0}(b+1/q) \geq 2a_*\) and \(\max_{j\neq i}\hat\Delta_{ij} \leq 1/q + 2C_{h_0}b^2 + 2C_{h_0}/q^2 \leq b/2 + 2/q \leq b_*\) as desired.
\end{proof}

We now prove Proposition~\ref{prop:tree-noise-matrix-easy}.
\begin{proof}[of Proposition~\ref{prop:tree-noise-matrix-easy}]
Let \(a_*, b_*, C, h_0\), and \(q_0\) be as in Lemma~\ref{lem:h_0-decoder}. For sufficiently large \(q\), the original noise matrix satisfies its hypothesis, since \(a_* \leq a_0\leq a_q\) and \(b_q \to 0\).

Set \(\Delta^{(q, 0)} = \Delta^{(q)}\). Recursively let \(\Delta^{(q, t+1)}\) be the output noise matrix of the \(h_0\)-decoder with boundary labels according to \(\Delta^{(q,t)}\). Write \(a_{q, t} := \min_i \Delta_{ii}^{(q,t)}\) and \(b_{q, t} := \max_{i\neq j} \Delta_{ij}^{(q,t)}\). By Lemma~\ref{lem:h_0-decoder}, \(a_{q,t} \geq a_*\) and \(b_{q, t+1} \leq \frac{1}{2}b_{q,t} + \frac{2}{q}\). Induction yields \(b_{q,t} \leq 2^{-t}b_q+\frac{4}{q}(1-2^{-t})\leq b_q + \frac{4}{q}\).

Set \(R_q^0 := \left\lfloor\min\left\{\frac{k_q}{2}, \frac{\log\frac{1}{b_q + 1/q}}{4\log d}\right\}\right\rfloor\), since we require \(R_q \leq k_q\) and \(R_q\) must increase sufficiently slowly compared to the number of labels and noise injected into the labels. For large \(q\), choose \(R_q \in\{R_q^0, \dots, R_q^0 + h_0-1\}\) satisfying \(R_q \equiv k_q\pmod{h_0}\). Then, indeed, we have that \(R_q\to\infty, R_q\leq k_q/2 + h_0 \leq k_q\), and \(d^{2R_q}(b_q + 1/q) \leq d^{2h_0}\sqrt{b_q + 1/q} \to 0\). Let \(t_q := (k_q-R_q)/h_0\) be the number of iterations we apply the decoder and \(m_q := \lfloor\sqrt{R_q}\rfloor\).

Apply the \(h_0\)-decoder recursively to the bottom \(t_q\) blocks. This produces a decoded label \(\hat\tau_u\) at every \(u\in L_{R_q}(\rho)\). Conditional on the true depth-\(R_q\) labels and \(T_{\leq R_q}\), these decoded labels are independent with noise matrix \(\Delta^{(q,t_q)}\), which satisfies \[\min_i\Delta_{ii}^{(q,t_q)} \geq a_*\text{ and }\max_{i\neq j}\Delta_{ij}^{(q,t_q)} \leq b_q + 4/q.\]

On the remaining top of the tree, \(T_{\leq R_q}\), let \(H_q\) be the finite-\(q\) synthetic message with pruning depth \(m_q\) and boundary labels \(\hat\tau_{L_{R_q}(\rho)}\). By Lemma~\ref{lem:pi_rho,kq-tilde-pi_rho,kq} with the noise matrix \(\Delta^{(q,t_q)}\), \[\eps_q := \ee\|\pi_{\rho, R_q} - H_q\| \leq \alpha^{m_q} + C\sum_{r=0}^{m_q-1}d^r\gamma^{R_q-r} + Cd^{2R_q}(b_q + 5/q).\] We note that \(\eps_q \to 0\), since all three terms tend to zero.

We now observe that \(H_q\) is measurable with respect to the original noisy boundary, the full observed tree, and the independent randomness in the decoder. By the Markov property of the broadcast process, conditional on \(T_{\leq R_q}\) and \(\tau_{L_{R_q}(\rho)}\), the descendant tree, descendant labels, boundary noise, and decoder randomness below generation \(R_q\) are independent of the root label. Let \(\cF_q = \sigma(T_{\leq k_q}, \tau_{L_{k_q}(\rho)}, \tilde\tau_{L_{k_q}(\rho)}, \mathcal R)\) and \(\tilde \cF_q = \sigma(T_{\leq k_q}, \tilde\tau_{L_{k_q}(\rho)}, \mathcal R)\), where \(\mathcal R\) is the randomness of the decoder. Then, \[\ee[\pi_{\rho,R_q}|\cF_q] = \pi_{\rho,k_q}\] and \[\ee[\pi_{\rho, R_q} | \tilde\cF_q] = \tilde\pi_{\rho, k_q}.\] Subtracting \(H_q\) and applying conditional Jensen to the half-\(\ell^1\) norm yields \[\ee\|\pi_{\rho, k_q} -H_q\| \leq \ee \|\pi_{\rho, R_q} - H_q\|\leq \eps_q\] and \[\ee\|\tilde\pi_{\rho, k_q} -H_q\| \leq \ee \|\pi_{\rho, R_q} - H_q\|\leq \eps_q.\] The triangle inequality then yields \[\ee\|\pi_{\rho, k_q} - \tilde\pi_{\rho, k_q}\| \leq 2\ee\|\pi_{\rho, R_q}-H_q\| = 2\eps_q \to 0\] as desired. Here, we remark that \(\pi_{\rho, k_q} = X_{q, k_q}\) and \(\tilde\pi_{\rho, k_q} = W_{q,k_q}\) by definition.

Taking the maximum coordinate yields \(|E_{k_q} - E_{R_q}| \leq 2\eps_q\) and \(|\tilde E_{k_q} - E_{R_q}| \leq 2\eps_q\). Then, by Lemma~\ref{lem:pi_rho,kq-tilde-pi_rho,kq} and the fact that \(d^{2R_q}(b_q+1/q)\to0\), we have that \(|E_{R_q} - E_{R_q}^\star| \to 0\). Since \(E_{R_q}^\star \to E_\infty^\star\), we conclude the desired limit.
\end{proof}

\section{Bootstrapping to Vanishing Label Retention}\label{subsec:bootstrap}
In this section, we prove the main theorem, in which \(a_q\) may vanish, i.e., the probability of retaining the true label vanishes. Throughout this section, we assume that \(\Phi, \lambda, (k_q)\), and \(\Delta^{(q)}\) satisfy Assumption~\ref{assump:noise-matrix}. 

The main idea is to construct decoded labels at an intermediate generation depth of \(k_q-h_q\) using the depth \(k_q\) boundary labels whose effective noise matrix has a fixed positive diagonal lower bound and vanishing off-diagonal entries. We can then apply Proposition~\ref{prop:tree-noise-matrix-easy} to that noise matrix on the remaining upper tree.

We expect approximately \((d\lambda)^h\) descendants of \(u\) at \(h\)-levels below to carry the label \(\tau_u\) as their true labels. After injecting the noise, we observe the true label with probability \(a_q\), so around \(a_q(d\lambda)^h\) are observed. Naturally, we require \(a_q(d\lambda)^h = \Omega(1)\). Hence, for this amplification step, we choose \(h\) so that \(a_q(d\lambda)^h\) is bounded below by a sufficiently large constant. Equivalently, we need \[h_q = \frac{\log\frac{1}{a_q}}{\log(d\lambda)} + H\] for some \(H > 0\), implying that \(h_q \asymp \frac{\log\frac{1}{a_q}}{\log(d\lambda)}\). Thus, \[d^{h_q}\asymp a_q^{-\zeta},\] where \(\zeta := \frac{\log d}{\log(d\lambda)}.\)

Since we have \(b_qd^{h_q} = \frac{b_q}{a_q^{\zeta}}\) noisy signals for each label, so we require \[b_q \ll a_q^\zeta,\] which motivates our requirement that \(\frac{b_q^{2s-1}}{a_q^{2s\zeta}}\to 0\) for some integer \(s > \zeta\) with \(\ee D^s < \infty\). For the remainder of the section, fix an integer \(s > \zeta\) such that \(\ee D^s < \infty\).

We first establish two moment estimates. Recall that \(Z_t^T := |L_t(\rho)|\). We continue to use \(Z_{\rho, t}\) for the number of depth-\(t\) vertices in the root's open copy component. By the copy component, we refer to the connected subgraph induced by all vertices connected to the root by a path consisting only of copy edges. Under the continuum coupling, this is the variable from Definition~\ref{defin:Z_rhok}. In the finite-\(q\) model, it need not count every vertex whose label happens to be equal to the root label, since we may have accidental collisions.

\begin{lem}\label{lem:moment-estimates}
For every integer \(\ell\in[s]\), there exists \(C_\ell > 0\) such that, for every \(t \geq 0\), \[\ee[(Z_t^T)^\ell] \leq C_\ell d^{\ell t}\text{ and }\ee[Z_{\rho, t}^\ell] \leq C_\ell(d\lambda)^{\ell t}.\]
\end{lem}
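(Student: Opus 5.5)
Both bounds are the same statement for a Galton--Watson process with offspring $\xi$ having mean $\mu>1$ and $\ee \xi^s<\infty$. For $Z_t^T$ take $\xi=D$ and $\mu=d$. For $Z_{\rho,t}$ take $\xi=R\sim\mathrm{Bin}(D,\lambda)$ mixed over $D$, with $\mu=d\lambda>1$. Note that $\ee R^\ell\le \ee D^\ell<\infty$ for $\ell\le s$, since $R\le D$. So I will prove, for a generic supercritical process $(Y_t)$ with $Y_0=1$, that $\ee Y_t^\ell\le C_\ell\mu^{\ell t}$ for all $t\ge0$ and all $\ell\le s$. The argument is by strong induction on $\ell$. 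The case $\ell=1$ is immediate from $\ee Y_t=\mu^t$.

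For the induction step, write $Y_{t+1}=\sum_{i=1}^{Y_t}\xi_i$ with $\xi_i$ i.i.d.\ copies independent of $Y_t$. Expand $\ee[(\sum_{i\le n}\xi_i)^\ell]$ by the multinomial theorem and group terms by the number $j$ of distinct indices that appear. The $j=\ell$ terms (all indices distinct) contribute $n(n-1)\cdots(n-\ell+1)\mu^\ell\le n^\ell\mu^\ell$. For $j\le \ell-1$ there are at most $c_\ell n^j$ index patterns, and each contributes at most $\prod \ee\xi^{e_i}\le K_\ell$, where $K_\ell$ depends only on the moments of $\xi$ up to order $\ell$. Hence
\[
\ee[Y_{t+1}^\ell\mid Y_t]\le \mu^\ell Y_t^\ell + K'_\ell\sum_{j=1}^{\ell-1}Y_t^j .
\]
Taking expectations and applying the induction hypothesis to the lower-order terms gives
\[
\ee Y_{t+1}^\ell\le \mu^\ell\,\ee Y_t^\ell + K''_\ell\,\mu^{(\ell-1)t},
\]
since $\mu>1$ lets us bound $\mu^{jt}\le\mu^{(\ell-1)t}$ for $j\le\ell-1$.

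To close the recursion, set $x_t:=\ee Y_t^\ell/\mu^{\ell t}$. Dividing by $\mu^{\ell(t+1)}$ gives
\[
x_{t+1}\le x_t+K''_\ell\mu^{-\ell}\mu^{-t}.
\]
Because $\mu>1$, the increments are summable, so $x_t\le 1+K''_\ell\mu^{-\ell}/(1-\mu^{-1})=:C_\ell$ uniformly in $t$. Taking the maximum of the constants obtained for the two processes yields a single $C_\ell$ for both claims.

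The one point needing care is the multinomial bookkeeping in the induction step. It must show that every term other than the fully-distinct one carries a strictly lower power of $Y_t$, and that its coefficient is controlled by $\ee\xi^\ell$ for $\ell\le s$, which is finite by assumption. It is also essential that $\mu>1$, since otherwise the geometric sum that closes the recursion would not converge. That condition holds here because $d\ge d\lambda>1$.
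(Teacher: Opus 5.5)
Your proof is correct and follows the paper's argument essentially step for step. Like the paper, you use strong induction on $\ell$: you expand the $\ell$th power of a sum of $Y_t$ i.i.d.\ offspring and group terms by the number of distinct indices, which gives $\ee[Y_{t+1}^\ell]\le \mu^\ell\ee[Y_t^\ell]+K\mu^{(\ell-1)t}$, and then you divide by $\mu^{\ell(t+1)}$ and sum; the copy process is handled by the same argument using $R\le D$ and $d\lambda>1$. You spell out the closing geometric sum and the shared constant a bit more explicitly than the paper does, but there is no substantive difference.
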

\begin{proof}
We first consider \(Z_t^T\). We use strong induction. For \(\ell = 1\), the claim follows from \(\ee Z_t^T = d^t\). Suppose the bounds hold for all positive integers smaller than \(\ell\). Conditional on \(Z_t^T = z\), the next generation is the sum of \(z\) independent copies of \(D\). Expanding the \(\ell\)th power and grouping terms according to the number of distinct summands gives \[\ee[(Z_{t+1}^T)^\ell | Z_t^T=z] \leq d^\ell z^\ell + C_\ell \sum_{j=1}^{\ell-1}z^j.\] Indeed, the terms with \(\ell\) distinct summands contribute \(d^\ell(z)_\ell \leq d^\ell z^\ell\) and all remaining terms involve moments of \(D\) of order at most \(\ell\). Here, \((z)_\ell = \frac{z!}{(z-\ell)!}\) refers to the falling factorial.

Taking expectations and applying the inductive hypothesis yields \[\ee[(Z_{t+1}^T)^\ell] \leq d^\ell\ee[(Z_t^T)^\ell] + C_\ell d^{(\ell-1)t}.\] Dividing by \(d^{\ell(t+1)}\) and summing over \(t\) proves the desired bound.

For the copy process, the offspring variable, conditional on \(D\), is \(\mathrm{Bin}(D, \lambda)\). Its mean is \(d\lambda > 1\), and its moments up to order \(s\) are finite because it is bounded by \(D\). The same argument, with \(d\) replaced by \(d\lambda\), proves the second inequality.
\end{proof}

\begin{lem}\label{lem:inequality}
Let \(z_1,\dots, z_n \geq 0\), and let \(B_1,\dots, B_n\) be independent \(\mathrm{Bernoulli}(1/q)\) random variables. Then, \[\ee\left[\left(\sum_{\ell=1}^nz_\ell B_\ell\right)^s\right] \leq C_s\left[\left(\frac{1}{q}\sum_{\ell=1}^nz_\ell\right)^s + \frac{1}{q}\sum_{\ell=1}^n z_\ell^s\right],\] where \(C_s\) depends only on \(s\).
\end{lem}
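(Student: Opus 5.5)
We prove the bound by expanding the \(s\)th power and sorting the terms according to how many distinct indices they involve. Write \(S := \sum_{\ell} z_\ell B_\ell\) and \(p := 1/q\). Expanding, \(\ee[S^s]\) is a sum over \(s\)-tuples \((\ell_1,\dots,\ell_s)\) of \(z_{\ell_1}\cdots z_{\ell_s}\,\ee[B_{\ell_1}\cdots B_{\ell_s}]\). Group the tuples by the set partition of \([s]\) they induce. If the tuple takes \(r\) distinct values \(j_1,\dots,j_r\) with multiplicities \(m_1,\dots,m_r\) (so \(\sum m_i = s\)), then idempotence and independence of the Bernoulli variables give \(\ee[B_{\ell_1}\cdots B_{\ell_s}] = p^r\). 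Hence
\[\ee[S^s] \le \sum_{\text{partitions } \mathcal P \text{ of } [s]} p^{|\mathcal P|}\prod_{i=1}^{|\mathcal P|} \Big(\sum_{\ell} z_\ell^{m_i}\Big),\]
where we drop the distinctness constraint among the \(j_i\) (all terms are nonnegative). The number of partitions depends only on \(s\), and this is where \(C_s\) comes from.

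Next we bound each partition term by the right-hand side. Put \(x_m := p\sum_\ell z_\ell^m\), so that a partition with block sizes \(m_1,\dots,m_r\) contributes \(\prod_i x_{m_i}\). The key interpolation step is log-convexity of \(m\mapsto \sum_\ell z_\ell^m\), which follows from Hölder. For \(1\le m\le s\), write \(\theta := \frac{s-m}{s-1}\). Then
\[\sum_\ell z_\ell^m \le \Big(\sum_\ell z_\ell\Big)^{\theta}\Big(\sum_\ell z_\ell^s\Big)^{1-\theta}.\]
Multiplying by \(p = p^{\theta}p^{1-\theta}\) gives \(x_m \le x_1^{\theta}x_s^{1-\theta}\). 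Set \(a := x_1^s\) and \(b := x_s\); these are exactly the two quantities on the right-hand side of the lemma. Then \(x_1^{\theta} = a^{\theta/s}\).

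Taking the product over the blocks gives \(\prod_i x_{m_i} \le a^{\sum_i\theta_i/s}\, b^{\sum_i(1-\theta_i)}\). Since \(\sum_i m_i = s\), we have \(\sum_i (1-\theta_i) = \sum_i \frac{m_i-1}{s-1} = \frac{s-r}{s-1}\) and \(\sum_i\theta_i = r - \frac{s-r}{s-1}\). The exponents of \(a\) and \(b\) then sum to
\[\frac{r}{s} - \frac{s-r}{s(s-1)} + \frac{s-r}{s-1} = \frac{r}{s} + \frac{s-r}{s} = 1.\]
So each term has the form \(a^{\eta}b^{1-\eta}\) with \(\eta\in[0,1]\), and by weighted AM–GM it is at most \(a+b\). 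Summing over partitions proves the lemma.

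The one delicate point is confirming that the exponents sum exactly to one. It does not come from \(p\) merely being at most one; it comes from the homogeneity bookkeeping above. I would check the two extremes directly. The all-singletons partition gives exactly \(x_1^s = a\), and the single-block partition gives exactly \(x_s = b\).
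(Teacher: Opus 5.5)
Your proposal is correct and follows the paper's proof essentially step for step: expand the \(s\)th power, group terms by their pattern of repeated indices (you use set partitions, the paper uses compositions \(r_1+\cdots+r_j=s\)), drop the distinctness constraint, interpolate each \(\frac{1}{q}\sum_\ell z_\ell^m\) between the first and \(s\)th moments via H\"older, and note that the resulting exponents \((r-1)/(s-1)\) and \((s-r)/(s-1)\) sum to one, so weighted AM--GM finishes. As in the paper, the division by \(s-1\) implicitly uses \(s\ge 2\), which holds in context since \(s>\zeta>1\).
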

\begin{proof}
The claim is immediate if the weights are all zero. Otherwise, put \(S_r := \frac{1}{q}\sum_{\ell=1}^n z_\ell^r\) for each \(r\in[s]\). Expanding the \(s\)th power and grouping terms according to the number \(j\) of distinct indices, independence of the \(B_\ell\) gives \[\ee\left[\left(\sum_{\ell=1}^nz_\ell B_\ell\right)^s\right] \leq C_s\sum_{j=1}^s\sum_{\substack{r_1+\cdots+r_j =s\\r_1,\dots, r_j\geq 1}}\prod_{\ell=1}^j S_{r_\ell}.\] Here, removing the restriction that the indices be distinct only increases the corresponding nonnegative sums. By H\H{o}lder's inequality, \[S_r \leq S_1^{(s-r)/(s-1)}S_s^{(r-1)/(s-1)}.\] Consequently, for \(r_1 +\cdots+ r_j = s,\) \[\prod_{\ell=1}^j S_{r_\ell} \leq (S_1^s)^{(j-1)/(s-1)} S_s^{(s-j)/(s-1)} \leq S_1^s + S_s.\] There are only finitely many terms in the expansion, with their number depending only on \(s\). Absorbing that number into \(C_s\) proves the claim.
\end{proof}

We now describe the reconstruction rule. Fix \(q\) and a height \(h \in\nn\). First, we thin the observations on the boundary. Independently at each observed boundary vertex \(v\), retain the noisy label \(j = \tilde\tau_v\) with probability \(a_q/\Delta_{jj}\) and otherwise replace it by an erasure symbol \(\dagger \notin[q]\). Denote the resulting label by \(\check\tau_v\). This operation uses only the observed label and the noise matrix, not the true label. Conditional on \(\tau_v = i\), it satisfies \[\Pr(\check\tau_v = j |\tau_v=i) = \frac{a_q\Delta_{ij}}{\Delta_{jj}}.\] In particular, \[\Pr(\check\tau_v = i |\tau_v=i) = a_q\text{ and }\Pr(\check\tau_v = j |\tau_v=i) \leq b_q\] for \(j\neq i\). The thinned observations remain conditionally independent given the true labels. Since the conditional probability of a correctly retained label is exactly \(a_q\) for every true input, then \(\mathbb1\{\check\tau_v=\tau_v\}\) are \(\mathrm{Bernoulli}(a_q)\) variables, independent of the tree and the broadcast.

We now define a high-multiplicity version of the two-witness rule.

\begin{defin}\label{defin:high-mult-two-witness}
Let \(\sigma\) be a labeling of \(L_h(u)\) with values in \([q]\cup\{\dagger\}\). For \(v\in L_1(u)\) and \(j\in[q]\), put \[N_{v,h}(j;\sigma) := \sum_{w\in L_{h-1}(v)}\mathbb1\{\sigma_w = j\}.\] Define \[J_{u,h}^{(q,s)}(\sigma) := \left\{j\in[q]: \exists v\neq w\in L_1(u)\text{ such that }N_{v,h}(j;\sigma) \geq 2s\text{ and }N_{w,h}(j;\sigma)\geq 2s\right\}.\] Thus, a label qualifies if it appears at least \(2s\) times in each of two distinct child-subtrees of \(u\). Erased observations are ignored.

Apply this rule to the thinned labels \(\check\tau\) and output \[\hat\tau_u := \begin{cases}
    j, &\text{if }J_{u,h}^{(q,s)}(\check\tau) = \{j\},\\
    Y_u^{(q)}, \text{otherwise},
\end{cases}\] where \(Y_u^{(q)}\sim\mathrm{Unif}([q])\) are independent.
\end{defin}

We bound the probability that a wrong label meets this requirement. There are two possible sources of such occurrences: vertices whose true label is the wrong candidate and vertices whose observed labels are incorrectly changed to that candidate by the noise.

Let \(s  > \zeta\) be an integer such that \(\ee D^s < \infty\) as in Assumption~\ref{assump:noise-matrix}.

\begin{lem}\label{lem:high-mult-two-witness}
For every \(h\in\nn\) and \(i\in[q]\), \[\Pr(J_{\rho,h}^{(q,s)}(\check\tau)\setminus\{i\} \neq\emptyset | \tau_\rho = i) \leq C_s\left[\frac{(a_q(d\lambda)^h)^{2s}}{q} + \frac{(a_qd^h)^{2s}}{q^{2s-1}} + d^{2sh}b_q^{2s-1}\right].\]
\end{lem}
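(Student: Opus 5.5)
The plan is to fix a wrong candidate \(j\neq i\), bound the probability that \(j\in J_{\rho,h}^{(q,s)}(\check\tau)\), and then sum over \(j\). Conditional on \(\tau_\rho=i\) and the tree, split the count in each child-subtree \(v\in L_1(\rho)\) as \(N_{v,h}(j;\check\tau)=X_v(j)+Y_v(j)\). Here \(X_v(j)\) counts boundary vertices whose \emph{true} label is \(j\) and which are retained correctly by the thinning. \(Y_v(j)\) counts boundary vertices whose true label differs from \(j\) but whose thinned observation is \(j\). If \(N_{v,h}\ge 2s\), then \(X_v\ge s\) or \(Y_v\ge s\). So for each ordered pair of distinct children \(v,w\) the event is covered by four events of the form \(\{X_v\ge s, X_w\ge s\}\), \(\{Y_v\ge s, Y_w\ge s\}\), and the two mixed ones. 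Each will be bounded through factorial moments: \(\Pr(X\ge s)\le \ee[(X)_s]/s!\). The joint versions use conditional independence of distinct child-subtrees given \(\tau_\rho\) and the tree.

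\emph{True-label term.} Since \(\tau_\rho=i\neq j\), every vertex of \(L_{h-1}(v)\) with true label \(j\) lies in the copy family of some mutation vertex \(x\) in the subtree of \(v\) whose fresh draw equals \(j\). Such a draw has probability \(1/q\), independently over mutation vertices. Write \(z_x\) for the size of the depth-\(h\) part of the copy family of \(x\). The thinning retains each vertex independently with probability \(a_q\), so \(\ee[(X_v)_s\mid\cdot]\le a_q^s\,\ee[(\sum_x z_xB_x)^s]\) with \(B_x\sim\mathrm{Bernoulli}(1/q)\). Lemma~\ref{lem:inequality} then gives
\[
C_s\Big[\big(\tfrac{a_q}{q}\textstyle\sum_x z_x\big)^s+\tfrac{a_q^s}{q}\sum_x z_x^s\Big].
\]
I then take expectations over the tree using Lemma~\ref{lem:moment-estimates}. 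First, \(\sum_x z_x\le |L_{h-1}(v)|\) has \(s\)-th moment \(\lesssim d^{sh}\). Second, \(\ee\sum_x z_x^s\lesssim\sum_{r\le h} d^{r}(d\lambda)^{s(h-r)}\lesssim(d\lambda)^{sh}\); this geometric sum is dominated by \(r=0\) precisely because \((d\lambda)^s>d\), which is where \(s>\zeta\) enters. The two children \(v\neq w\) give independent subtrees, and \(\ee[D(D-1)]<\infty\) controls the number of pairs. Summing over \(j\) then yields
\[
q\big[(a_qd^h/q)^s+(a_q(d\lambda)^h)^s/q\big]^2\lesssim \frac{(a_qd^h)^{2s}}{q^{2s-1}}+\frac{(a_q(d\lambda)^h)^{2s}}{q},
\]
with the cross term absorbed by AM–GM.

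\emph{Noise term.} Each boundary vertex with true label \(\ell\neq j\) has \(\check\tau=j\) with probability \(p_{\ell j}\le b_q\), independently. Hence
\[
\ee[(Y_v)_s(Y_w)_s\mid\tau,T]\le\sum_{\text{$2s$-tuples}}\ \prod p_{\cdot j}.
\]
I bound \(2s-1\) of the factors by \(b_q\) and keep one factor \(p_{x j}\) for a fixed vertex \(x\). Summing over \(j\) uses \(\sum_j p_{xj}\le 1\), giving \(b_q^{2s-1}\,|L_{h-1}(v)|^s|L_{h-1}(w)|^s\). Independence of the two subtrees and Lemma~\ref{lem:moment-estimates} (which needs only \(s\) moments) give \(\lesssim d^{2sh}b_q^{2s-1}\).

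\emph{Mixed terms and the main obstacle.} For the mixed events \(\{X_v\ge s,Y_w\ge s\}\), I would use the same factorial-moment product. It should be bounded by the geometric mean of the two pure terms, or bounded directly by pairing \(q^{-1}\)-type and \(b_q\)-type factors before summing over \(j\); either way it is dominated by the sum of the three displayed terms. I expect the main obstacle to be the true-label term. There, the dependence created by large copy families must be handled carefully: conditioning on the mutation/copy structure, applying Lemma~\ref{lem:inequality} inside each child-subtree, and verifying that the sum over mutation depths converges only because \(s>\zeta\). The remaining steps are union bounds and moment bookkeeping.
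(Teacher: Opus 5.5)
Your proposal is correct and follows essentially the paper's argument. It uses the same split of $N_{v,h}(j;\check\tau)$ into retained true-$j$ vertices and noise-created copies of $j$ (the paper's $C_{v,j}$ and $B_{v,j}$), Lemma~\ref{lem:inequality} applied conditional on the copy forest with $(d\lambda)^s>d$ controlling $\ee\sum_\ell z_\ell^s$, factorial-moment Markov bounds, independence of distinct child branches, and a union bound weighted by $\ee\binom{D}{2}$. The differences are cosmetic. The paper bounds the one-branch probability by $\Pr_i(C_{v,j}\ge s)+\Pr_i(B_{v,j}\ge s)$ and squares it with $(x+y)^2\le 2x^2+2y^2$; this is exactly your geometric-mean treatment of the mixed events, since the branches are i.i.d.\ given $D_\rho$ and $\tau_\rho$. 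For the noise term, the paper averages over the true labels to get $p_{ij}$ with $\sum_{j}p_{ij}^2\le b_q$, instead of summing over $j$ inside your joint $2s$-tuple moment.
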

\begin{proof}
Fix $i\in[q]$, and write \[\Pr_i(\cdot):=\Pr(\cdot| \tau_\rho=i)\text{ and }\mathbb E_i[\cdot]:=\mathbb E[\cdot|\tau_\rho=i].\] Consider one child \(v\in L_1(\rho)\) and put \(N := |L_{h-1}(v)|\). The estimates for this branch may equivalently be obtained conditional on any specified positive value of \(D_\rho\) and any one of its children. They do not depend on that value or the chosen child. For a wrong candidate \(j\neq i\), let \[C_{v,j} := \sum_{w\in L_{h-1}(v)}\mathbb1\{\tau_w = \check\tau_w = j\}\] and \[B_{v, j} := \sum_{w\in L_{h-1}(v)} \mathbb1\{\tau_w \neq j, \check\tau_w = j\}.\] Then \(N_{v,h}(j; \check\tau) = C_{v,j} + B_{v,j}\). 

Let \[T_{v,j} := \sum_{w\in L_{h-1}(v)} \mathbb1\{\tau_w=j\}.\] Expose the tree and the copy-or-mutation decisions, but not the fresh labels. Let \(z_1,\dots, z_n\) be the numbers of boundary vertices in the nonroot copy components that meet this branch. The root copy component has label \(i\) and therefore contributes nothing to \(T_{v,j}\). Every other component receives an independent uniform label in \([q]\). Consequently, conditional on this forest, \[T_{v,j}\stackrel{d}{=} \sum_{\ell}^nz_\ell B_\ell,\] where \(B_\ell\stackrel{\mathrm{iid}}{\sim} \mathrm{Bernoulli}(1/q)\). This representation allows different components to receive the same label.

We have that \(\sum_\ell z_\ell \leq N\). To bound \(\sum_\ell z_\ell^s\), assign each component to its highest vertex. At depth \(t\) from \(\rho\), the branch has \(d^{t-1}\) vertices in expectation. By Lemma~\ref{lem:moment-estimates}, for each such vertex, its number of open descendants at level \(h\) has \(s\)th moment at most \(C_s(d\lambda)^{s(h-t)}\). Counting all such vertices, including those that are not component roots, only increases the sum. Hence, \[\ee\sum_{\ell=1}^n z_\ell^s \leq C_s\sum_{t=1}^hd^{t-1}(d\lambda)^{s(h-t)} \leq C_s(d\lambda)^{sh},\] where the last inequality uses \((d\lambda)^s > d\). 

Applying Lemma~\ref{lem:inequality} conditional on the forest, and then Lemma~\ref{lem:moment-estimates}, gives \[\ee_iT_{v,j}^s \leq C_s\left(\frac{d^{sh}}{q^s} + \frac{(d\lambda)^{sh}}{q}\right).\] Conditional on the true labels, \(C_{v,j}\) is an independent thinning of the \(T_{v,j}\) true-\(j\) vertices with retention probability \(a_q\). Using falling factorials, we conclude that \begin{align*}
    \Pr_i(C_{v,j}\geq s) &\leq \frac{\ee_i[(C_{v,j})_s]}{s!}\\
    &= \frac{a_q^s\ee_i[(T_{v,j})_s]}{s!}\\
    &\leq C_s\left[\left(\frac{a_qd^h}{q}\right)^s + \frac{(a_q(d\lambda)^h)^s}{q}\right].
\end{align*} Squaring and summing \(j\neq i\) yields \[\sum_{j\neq i}\Pr_i(C_{v,j}\geq s)^2 \leq C_s\left[q\left(\frac{a_qd^h}{q}\right)^{2s} + \frac{(a_q(d\lambda)^h)^{2s}}{q}\right].\]

For \(w\in L_{h-1}(v)\), write \[e_{w,j} := \mathbb1\{\tau_w \neq j\}\frac{a_q\Delta_{\tau_w,j}}{\Delta_{jj}}.\] Conditional on the true labels, this is the probability that \(w\) contributes to \(B_{v,j}\). We know that \(0\leq e_{w,j} \leq b_q\) and \(\sum_{j=1}^q e_{w,j} \leq 1\). Independence of the boundary noises and thinning gives \[\ee[(B_{v,j})_s | T_{\leq h}, \tau_{L_{\leq h}(\rho)}] = \sum_{\substack{w_1,\dots, w_s\in L_{h-1}(v)}\\\text{pairwise distinct}}\prod_{\ell=1}^s e_{w_\ell, j} \leq b_q^{s-1} N^{s-1} \sum_{w\in L_{h-1}(v)} e_{w,j}.\]

Set \(p_{ij} := \sum_{\ell\neq j}(M^h)_{i\ell}\frac{a_q\Delta_{\ell j}}{\Delta_{jj}}\). Since every boundary vertex has distance \(h\) from \(\rho\), conditional on the tree and \(\tau_\rho=i\), its expected value of \(e_{w,j}\) is \(p_{ij}\). Moreover, \(0\leq p_{ij} \leq b_q\) and \(\sum_{j=1}^q p_{ij} \leq 1\). It follows that \[\Pr_i(B_{v,j}\geq s) \leq \frac{b_q^{s-1}p_{ij}\ee[N^s]}{s!}\leq C_s d^{sh}b_q^{s-1}p_{ij}.\] Thus, \[\sum_{j\neq i} \Pr_i(B_{v,j} \geq s)^2 \leq C_sd^{2sh}b_q^{2(s-1)}\sum_{j\neq i}p_{ij}^2 \leq C_s d^{2sh}b_q^{2s-1}.\]

Put \(u_{ij} := \Pr_i(N_{v,h}(j; \check\tau)\geq 2s)\). The event inside this probability implies \(C_{v,j} \geq s\) or \(B_{v,j} \geq s\). By \((x+y)^2 \leq 2x^2 + 2y^2\) and our earlier bounds, \[\sum_{j\neq i}u_{ij}^2 \leq C_s\left[\frac{(a_q(d\lambda)^h)^{2s}}{q} + \frac{(a_qd^h)^{2s}}{q^{2s-1}} + d^{2sh}b_q^{2s-1}\right].\] Conditional on \(D_\rho\) and the root label, different child-subtrees and their observations are independent. For any pair of distinct children, the probability that both branches have at least \(2s\) occurrences of a fixed \(j\) is \(u_{ij}^2\). Hence, a union bound over child pairs and wrong labels gives \[\Pr_i(J_{\rho,h}^{(q,s)}(\check\tau)\setminus\{i\}\neq \emptyset) \leq \ee\binom{D}{2}\sum_{j\neq i}u_{ij}^2.\] Since \(\ee D^2 < \infty\), absorbing \(\ee\binom{D}{2}\) into the constant completes the proof.
\end{proof}

We now choose the height of the reconstruction block in accordance with our observation at the start of this section. 

\begin{lem}\label{lem:h_q}
There exist an integer \(H \geq 1\) and a constant \(c_1 > 0\), depending only on \(\Phi, \lambda\), and \(s\) such that, with \(h_q := \left\lceil\frac{\log(1/a_q)}{\log(d\lambda)}\right\rceil + H\), we have, uniformly over \(i\in[q]\), \[\Pr(i\in J_{\rho,h_q}^{(q,s)}(\check\tau)|\tau_\rho = i) \geq c_1.\]
\end{lem}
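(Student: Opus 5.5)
We only need a lower bound, so we discard every vertex not joined to the root by copy edges. Condition on \(\tau_\rho=i\). For each child \(v\in L_1(\rho)\) on an open edge, let \(\check Z_v\) count the vertices \(w\in L_{h-1}(v)\) that lie in the root's open copy component and satisfy \(\check\tau_w=i\). These vertices have true label \(i\), so \(N_{v,h}(i;\check\tau)\ge \check Z_v\). Hence
\[
\{i\in J^{(q,s)}_{\rho,h}\}\supseteq\{\text{at least two open children } v \text{ have } \check Z_v\ge 2s\}.
\]
Since \(\mathbb 1\{\check\tau_w=\tau_w\}\) are i.i.d. \(\mathrm{Bernoulli}(a_q)\), independent of the tree and the broadcast, \(\check Z_v\mid Z_{v,h-1}\sim\mathrm{Bin}(Z_{v,h-1},a_q)\). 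Here \(Z_{v,h-1}\) counts the open descendants of \(v\) at level \(h-1\), which has the law of a copy-process generation. Given the number \(R\) of open children, the branches are i.i.d. The same PGF computation as in Lemma~\ref{lem:lim-of-two-witness} then gives the lower bound
\[
\omega(\theta_q), \qquad \theta_q:=\Pr(\check Z_v<2s).
\]
The bound depends on \(q\) only through \(a_q\) and \(h_q\), so it is uniform in \(i\).

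It remains to show \(\theta_q\le \xi+\eta\) for a small fixed \(\eta\) with \(\xi+\eta<1\). Then \(\omega(\theta_q)\ge \min_{[0,\xi+\eta]}\omega\). By Lemma~\ref{lem:lim-of-two-witness} we have \(\omega(\xi)=(1-\xi)(1-\alpha)>0\), and \(\omega'(t)=-(1-t)f''(t)\le 0\), so \(\omega\) is nonincreasing and this minimum equals \(\omega(\xi+\eta)>0\). We set \(c_1:=\omega(\xi+\eta)\).

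To bound \(\theta_q\), write \(n:=h_q-1\) and \(W_n:=Z_{v,n}/(d\lambda)^n\). The key choice of \(h_q\) gives \(a_q(d\lambda)^n\ge (d\lambda)^{H-1}=:K\), which is large once \(H\) is large. We then split
\[
\theta_q\le \Pr(Z_{v,n}=0)+\Pr\big(0<W_n\le\delta\big)+\sup_{z\ge \delta(d\lambda)^n}\Pr\big(\mathrm{Bin}(z,a_q)<2s\big).
\]
The first term is \(P_n\le\xi\). For the third term, \(\mathrm{Bin}(z,a_q)\) has mean at least \(\delta K\), so Chebyshev gives a bound \(\lesssim 1/(\delta K)\) once \(\delta K\ge 4s\). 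The second term is the main obstacle. It requires that, on survival, the normalized population is not small with high probability. Since \(\ee D^s<\infty\) with \(s\ge2\), the copy offspring \(\mathrm{Bin}(D,\lambda)\) has finite variance, and Kesten--Stigum (\(L^2\) martingale convergence) gives \(W_n\to W\) in \(L^2\). Moreover \(\{W=0\}\) coincides a.s. with extinction and \(W\) has no atom at \(0\) beyond the extinction event. Therefore
\[
\limsup_n \Pr(0<W_n\le \delta)\le \Pr(0<W\le 2\delta)\to 0 \quad (\delta\to 0).
\]
There is a subtlety: for finite \(n\) we also need \(\Pr(Z_{v,n}>0,\ W=0)=P_\infty-P_n\le C\beta^n\) (Lemma~\ref{lem:gw-geometric-convergence-to-xi}), which is small for large \(n\).

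Because \(n\ge H-1\), all of these are uniform in \(q\). We first pick \(\delta\) so that \(\Pr(0<W\le 2\delta)<\eta/4\). We then pick \(H\) large so that the \(L^2\)-approximation error, \(C\beta^{H-1}\), and \(1/(\delta K)\) are each below \(\eta/4\). If one wishes to avoid invoking the continuous limit, the second term can alternatively be handled by a two-stage argument: survival to a fixed generation \(n_0\) gives \(\gtrsim\) many independent surviving lines, each contributing population \(\ge c(d\lambda)^{n-n_0}\) with probability bounded below by Paley--Zygmund using Lemma~\ref{lem:moment-estimates}.
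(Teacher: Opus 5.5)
Your proof is correct, but it takes a different route from the paper.

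\textbf{The paper's route.} The paper never compares with the extinction probability. It applies a Paley--Zygmund (Cauchy--Schwarz) bound directly to the unconditioned retained copy count \(V_v\sim\mathrm{Bin}(Z_{v,h_q-1},a_q)\) of a single child branch. It uses \(\ee V_v=\mu_q\ge (d\lambda)^{H-1}\ge 4s\) and \(\ee V_v^2\le(1+C_2)\mu_q^2\) from Lemma~\ref{lem:moment-estimates} with \(\ell=2\). This gives \(\Pr(V_v\ge 2s)\ge 1/(4(1+C_2))\). It then only asks that two fixed children be open and successful, which yields \(c_1=\Pr(D\ge 2)\lambda^2/(16(1+C_2)^2)\).

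\textbf{Your route.} You bound \(\theta_q\le P_n+\Pr(0<W_n\le\delta)+O(1/(\delta K))\) and feed this into the PGF identity \(\omega(\theta_q)\). This gives a lower bound \(\omega(\xi+\eta)\) that can be made arbitrarily close to \((1-\xi)(1-\alpha)\), the continuum two-witness limit of Lemma~\ref{lem:lim-of-two-witness}. So your constant is essentially sharp, whereas the paper's is crude.

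\textbf{What each costs.} Your route needs heavier machinery:
\begin{enumerate}
\item \(L^2\) martingale convergence for the copy process;
\item the a.s.\ identification \(\{W=0\}=\{\text{extinction}\}\);
\item the fact that \(\ee(W-W_n)^2\) decreases to \(0\) in \(n\), which is what makes your bounds uniform in \(q\) via \(n\ge H-1\);
\item the geometric rate for \(\xi-P_n\).
\end{enumerate}
The proof of Theorem~\ref{thm:tree-theorem} only uses \(\hat a_q\ge c_1/2>0\), so the sharper constant buys nothing downstream, and the paper's second-moment argument is enough. Your fallback two-stage sketch is closer in spirit to the paper, but the paper shows that no conditioning on survival is needed at all.

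\textbf{A small correction.} Positivity of \(\omega(\xi+\eta)\) does not follow from \(\omega(\xi)>0\) plus monotonicity, since monotonicity goes the wrong way. It does follow from continuity of \(\omega\) for small \(\eta\). Alternatively, \(\omega(t)\) is the probability of at least two successes among \(R\) independent trials with success probability \(1-t\), and \(\Pr(R\ge 2)>0\).
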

\begin{proof}
Choose \(H\) sufficiently large so that \((d\lambda)^{H-1} \geq 4s\). Condition on \(D_\rho \geq 2\) and \(\tau_\rho=i\). For each child \(v\), let \(V_v\) count the descendants at distance \(h_q-1\) from \(v\) that are connected to \(v\) by copy edges and whose labels are correctly retained after thinning. 

Then, \[V_v|Z_{v,h_q-1}\sim \mathrm{Bin}(Z_{v,h_q-1}, a_q).\] Write \(\mu_q := \ee V_v = a_q(d\lambda)^{h_q-1}\). By the choice of \(h_q,\) \(\mu_q\geq (d\lambda)^{H-1} \geq 4s\). Then, Lemma~\ref{lem:moment-estimates} gives \[\ee V_v^2 = a_q(1-a_q)\ee Z_{v,h_q-1} + a_q^2 \ee Z_{v, h_q-1}^2 \leq \mu_q + C_2\mu_q^2 \leq (1+C_2)\mu_q^2.\] By Cauchy-Schwarz, \(\mu_q \leq \frac{\mu_q}{2} + \sqrt{\ee V_v^2\Pr(V_v \geq \mu_q/2)}\), implying that \(\Pr(V_v \geq 2s) \geq \Pr(V_v\geq \mu_q/2) \geq \frac{1}{4(1+C_2)}\). 

Call a child \(v\) successful if its edge from the root is open and \(V_v \geq 2s\). Under the conditioning above, these events are independent, each with probability at least \(p := \frac{\lambda}{4(1+C_2)}\). Their number stochastically dominates \(\mathrm{Bin}(D_\rho, p)\), so the probability of at least two successful children is at least \(p^2\). Thus, the true root label qualifies with probability at least \(\Pr(i\in J_{\rho, h_q}^{(q,s)}(\check\tau) | \tau_\rho = i) \geq c_1 := \Pr(D\geq 2)p^2> 0\). This lower bound is uniform in the root label and proves the claim.
\end{proof}

\begin{proof}[of Theorem~\ref{thm:tree-theorem}]
Fix an integer \(s > \zeta\) such that \(\ee D^s< \infty\) and \(b_q^{2s-1}/a_q^{2s\zeta} \to 0\). Such an integer exists by Assumption~\ref{assump:noise-matrix}. Let \(H\) be as in Lemma~\ref{lem:h_q}, and set \(h_q := \left\lceil \frac{\log(1/a_q)}{\log(d\lambda)}\right\rceil + H\).

We first show that applying the height-\(h_q\) reconstruction rule of Definition~\ref{defin:high-mult-two-witness} produces an effective noise matrix with a uniformly positive diagonal and vanishing off-diagonal entries.

The choice of \(h_q\) gives \((d\lambda)^H \leq a_q(d\lambda)^{h_q}\leq (d\lambda)^{H+1},\) and \(d^{h_q}\leq d^{H+1}a_q^{-\zeta}\). Hence, by Lemma~\ref{lem:high-mult-two-witness}, uniformly over \(i\in[q]\), \[\Pr(J_{\rho, h_q}^{(q,s)}(\check\tau)\setminus\{i\}\neq \emptyset | \tau_\rho = i) \leq C\left[\frac{1}{q} + \frac{b_q^{2s-1}}{a_q^{2s\zeta}} + \frac{a_q^{2s(1-\zeta)}}{q^{2s-1}}\right].\] We show that the final term also tends to zero under Assumption~\ref{assump:noise-matrix}.

Since \(a_q = \min_i \Delta_{ii}\), there exists some \(i\in[q]\) such that \(\Delta_{ii} = a_q\). The off-diagonal entries in this row sum to \(1-a_q\), and thus, \(b_q \geq \frac{1-a_q}{q-1}\). If \(a_q \leq 1/2\), then \(1/q \leq 2b_q\), implying that \[\frac{a_q^{2s(1-\zeta)}}{q^{2s-1}}\leq 2^{2s-1}a_q^{2s(1-\zeta)}b_q^{2s-1} = 2^{2s-1}a_q^{2s}\frac{b_q^{2s-1}}{a_q^{2s\zeta}} \leq 2^{2s-1}\frac{b_q^{2s-1}}{a_q^{2s\zeta}}.\] On the other hand, if \(a_q > 1/2\), then \[\frac{a_q^{2s(1-\zeta)}}{q^{2s-1}}\leq \frac{2^{2s(\zeta-1)}}{q^{2s-1}}\leq \frac{C}{q}.\] Consequently, \[\sup_{i\in[q]}\Pr(J_{\rho,h_q}^{(q,s)}(\check\tau)\setminus\{i\} \neq \emptyset | \tau_\rho = i) \leq C\left(\frac{1}{q} + \frac{b_q^{2s-1}}{a_q^{2s\zeta}}\right) = o(1).\]

Let \(\hat\Delta\) denote the effective noise matrix produced by the height-\(h_q\) reconstruction rule, i.e., \[\hat\Delta_{ij} := \Pr(\hat\tau_\rho = j|\tau_\rho = i).\] By Lemma~\ref{lem:h_q}, the true root label qualifies with probability at least \(c_1 > 0\), uniformly over \(i\in[q]\). If the true label qualifies and no wrong label qualifies, then the reconstruction rule outputs the true label. Hence, \[\min_i\hat\Delta_{ii} \geq c_1 - C\left(\frac{1}{q} + \frac{b_q^{2s-1}}{a_q^{2s\zeta}}\right) \geq \frac{c_1}{2}\] for all sufficiently large \(q\).

For \(j\neq i\), the reconstruction rule can output \(j\) only if \(j\) qualifies or if the independent uniform fallback label is equal to \(j\). Thus, \[\hat\Delta_{ij} \leq C\left(\frac{1}{q}+\frac{b_q^{2s-1}}{a_q^{2s\zeta}}\right) + \frac{1}{q}.\] Hence, letting \(\hat a_q := \min_i \hat\Delta_{ii}\) and \(\hat b_q := \max_{i\neq j}\hat\Delta_{ij}\), we have that \(\hat a_q \geq c_1/2 > 0\) and \(\hat b_q \to 0\).

We now determine the generation at which these reconstructed labels are produced. Set \(r_q := k_q - h_q\). By the definition of \(h_q\), \(r_q \geq k_q - \frac{\log(1/a_q)}{\log(d\lambda)} - H -1 = \frac{\log(a_q(d\lambda)^{k_q})}{\log(d\lambda)} - H - 1\). Assumption~\ref{assump:noise-matrix} gives \(a_q(d\lambda)^{k_q}\to\infty\), so \(r_q\to\infty\).

Apply the height-\(h_q\) reconstruction rule independently below every vertex \(u\in L_{r_q}(\rho)\), using the original noisy labels \(\tilde\tau_{L_{k_q}(\rho)}\). Denote the resulting reconstructed labels by \(\hat\tau_{L_{r_q}(\rho)}\). Conditional on \(T_{\leq r_q}\) and the true labels \(\tau_{L_{r_q}(\rho)}\), the descendant subtrees rooted at vertices in \(L_{r_q(\rho)}\) are independent. Thus, \[\Pr(\hat\tau_u = j_u,\forall u\in L_{r_q}(\rho) | T_{\leq r_q}, \tau_{L_{r_q}(\rho)}) = \prod_{u\in L_{r_q}(\rho)}\hat\Delta_{\tau_u, j_u}.\] Hence, the reconstructed depth-\(r_q\) labels are exactly boundary observations generated through the effective noise matrix \(\hat\Delta\).

Since \(s > \zeta > 1\), we have \(s\geq 2\), meaning \(\ee D^2 < \infty\). Thus, we have that \(\Phi, \lambda, (r_q)\), and \(\hat\Delta\)  satisfy Assumption~\ref{assump:noise-matrix-easy}. Defining the posterior using the reconstructed depth-\(r_q\) labels by \(\hat W_{q, r_q}(i) := \Pr(\tau_\rho = i | T_{\leq r_q}, \hat\tau_{L_{r_q}(\rho)})\), then Proposition~\ref{prop:tree-noise-matrix-easy} yields \[\eps_q := \ee\|X_{q,r_q} - \hat W_{q, r_q}\| \to 0.\]

It remains to transfer this estimate from depth \(r_q\) back to the original depth \(k_q\). Let \(\mathcal R_q\) denote all auxiliary randomness used in the thinning and fallback steps. Define \(\cF_q := \sigma(T_{\leq k_q}, \tau_{L_{k_q}(\rho)}, \tilde\tau_{L_{k_q}(\rho)}, \mathcal R_q)\) and \(\tilde\cF_q := \sigma(T_{\leq k_q}, \tilde\tau_{L_{k_q}(\rho)}, \mathcal R_q)\). Then \(\hat W_{q, r_q}\) is \(\tilde F_q\)-measurable, and hence, also \(\cF_q\)-measurable.

By the Markov property of the broadcast process, once \(\tau_{L_{r_q}(\rho)}\) and \(T_{\leq r_q}\) are given, everything below generation \(r_q\) is conditionally independent of the root label. Thus, \[\ee[X_{q,r_q}|\cF_q] = X_{q, k_q}.\] Indeed, after the true depth-\(k_q\) labels are revealed, the noisy labels and the auxiliary randomization contain no further information about the root. Similarly, \[\ee[X_{q,r_q} | \tilde\cF_q] = W_{q, k_q}.\] 

Subtracting \(\hat W_{q,r_q}\) and applying conditional Jensen to the TV norm gives \[\ee\|X_{q, k_q} - \hat W_{q, r_q}\| = \ee\|\ee[X_{q, r_q} - \hat W_{q,r_q}|\cF_q]\| \leq \ee\|X_{q, r_q} - \hat W_{q,r_q}\| = \eps_q\] and \[\ee\|W_{q, k_q} - \hat W_{q, r_q}\| = \ee\|\ee[X_{q, r_q} - \hat W_{q,r_q}|\tilde\cF_q]\| \leq \ee\|X_{q, r_q} - \hat W_{q,r_q}\| = \eps_q.\] By the triangle inequality, \[\ee\|X_{q, k_q} - W_{q, k_q}\| \leq 2\eps_q \to 0\] as desired.

Finally, the maximum-coordinate functional is 1-Lipschitz with respect to the TV norm. Since the noisy boundary is a random perturbation of the true boundary, \[0 \leq p_T(q) - \tilde p_T(q) \leq \ee\|X_{q, k_q} - W_{q,k_q}\| \to 0.\] Applying Proposition~\ref{prop:tree-noise-matrix-easy} with the identity noise matrix gives \(p_T(q)\to E^\star_\infty\). The preceding bound then implies \(\tilde p_T(q)\to E^\star_\infty\), completing the proof.
\end{proof}

\bibliographystyle{plain} 
\bibliography{refs}

\end{document}